\newtheorem{lem}{Lemma}[section]
\newtheorem{thm}{Theorem}[section]
\newtheorem{pro}{Proposition}[section]
\theoremstyle{definition}
\newtheorem{remark}{Remark}[section]
\begin{document}

\title{ Graphs with the minimum spectral radius  for given independence number \footnote{Supported by National Natural Science Foundation of China (Nos. 11971274, 12061074, 11671344).}}
\author{
{\small Yarong Hu$^{a,b}$, \  \ Qiongxiang Huang$^{a,}$\footnote{Corresponding author.
\it Email addresses:  huangqx@xju.edu.cn (Q. Huang).},\ \ Zhenzhen Lou$^{a}$}\\[2mm]
\footnotesize $^a$ College of Mathematics and System Science,
Xinjiang University, Urumqi 830046, China \\
\footnotesize $^b$ School of Mathematics and Information Technology, Yuncheng University, Yuncheng 044000, China}
\date{}
\maketitle {\flushleft\large\bf Abstract}
Let $\mathbb{G}_{n,\alpha}$ be the set of  connected graphs with order $n$ and  independence number $\alpha$. Given $k=n-\alpha$, the graph with  minimum spectral radius among $\mathbb{G}_{n,\alpha}$ is called the minimizer  graph.
Stevanovi\'{c} in the classical book [D. Stevanovi\'{c}, Spectral Radius of Graphs, Academic Press, Amsterdam, 2015.] pointed that determining minimizer  graph in $\mathbb{G}_{n,\alpha}$ appears to be a tough problem on page $96$.
Very recently, Lou and Guo in \cite{Lou} proved that
 the minimizer  graph of $\mathbb{G}_{n,\alpha}$ must be a tree if $\alpha\ge\lceil\frac{n}{2}\rceil$.
In this paper,  we further give  the structural features for the  minimizer  graph  in detail, and then provide of a constructing theorem for  it. Thus, theoretically we completely determine the minimizer  graphs in $\mathbb{G}_{n,\alpha}$ along with their spectral radius for any given $k=n-\alpha\le \frac{n}{2}$.
As an application, we  determine all the minimizer  graphs  in $\mathbb{G}_{n,\alpha}$  for $\alpha=n-1,n-2,n-3,n-4,n-5,n-6$   along with their  spectral radii, the first four results are known in \cite{Xu,Lou} and the last two are new.
\begin{flushleft}
\textbf{Keywords:} Minimum spectral radius; Independence number; Minimizer graph
\end{flushleft}
\textbf{AMS Classification:} 05C50
\section{Introduction}\label{se-1}
A graph $G$ considered throughout this paper is  simple with  vertex set $V(G)$ and  edge set $E(G)$, where $n(G)=|V(G)|$ denote the \emph{order} and $|E(G)|=m(G)$ the \emph{size} of $G$.
The set of the neighbors of a vertex $v\in V(G)$ is denoted
by $N_{G}(v)$, and  the degree of $v$ is denoted by $d_G(v)$ (or  $d(v)$ for short).
For  $U\subseteq V(G)$, let $G[U]$ be the subgraph induced by $U$.
The \emph{adjacency matrix} of a  graph $G$ is defined as $A(G)=(a_{ij})$, where $a_{ij}=1$ if  vertices $i$ and $j$ are adjacent, and $a_{ij}=0$ otherwise.
The largest eigenvalue of $A(G)$ is called the \emph{spectral radius} of $G$, denoted by $\rho(G)$.
A subset $S\subseteq V(G)$ is called an \emph{independent set} of $G$ if there is no edge between every vertex in $S$.
The \emph{independence number} of $G$, denoted by $\alpha(G)$,  is the maximum cardinality of an independent set in $G$.
Let $\mathbb{G}_{n,\alpha}$ be the set of connected graphs with order $n$ and  independence number $\alpha$.
Through out this paper,
we call a graph is a \emph{minimizer graph} if it attains the minimum spectral radius over all graphs in $\mathbb{G}_{n,\alpha}$ for given $k=n-\alpha$.
As usual,
let $K_n$, $K_{1,n-1}$ and $P_n$ be  respectively the complete graph, the star and the path of order $n$.

The research on  the maximum or minimum spectral radius among connected graphs with a given graph invariant has been studied extensively.
The extremal graphs with the maximum  spectral radius for a given clique number, chromatic number, matching number, diameter, independence number and so on,  are studied in  \cite{111,61,153,72,143,Stevanovic,Lu}.
Compared to the maximum  spectral radius,
characterizing  the graphs with the minimum spectral radius is difficult.
The results for giving diameter is only determined for some special values.
For examples,  the graph  $G$ of the minimum spectral radius with small diameter  $diam(G)\in \{1, 2, 3, 4\}$  are determined in \cite{36,8}, with large diameter $diam(G)\in \{n-1,n-2,n-3, \lfloor n/2\rfloor\}$ in \cite{154},   $diam(G)=n-4$ in \cite{166},
simultaneously, $diam(G)\in \{n-4,n-5\}$ in \cite{37}, and $ diam(G)\in\{n-6,n-7,n-8\}\cup [\frac{n}{2}, \frac{2n-3}{3}]$  in\cite{87}.
In general, characterizing  the graphs of the minimum spectral radius is still an open problem \cite{Stevanovic}.
Particularly, Stevanovi\'{c} \cite{Stevanovic} pointed out that determining the  graph with the minimum spectral radius among connected graph with independence number  $\alpha$ appears  to be a tough problem.
In fact, the corresponding results for the independence number are less compared to the diameter, here we  list them in Tab.\ref{result} for references.
\begin{table}[h]
  \centering
 \footnotesize
\begin{tabular*}{13.5cm}{p{40pt}|p{25pt}|p{130pt}|p{25pt}|p{97pt}}
\hline
References&$\alpha$&The minimizer graph&Year&Author
 \\\hline
\cite{Xu}& $1$& the complete graph $K_n$&2009& Xu, Hong, Shu and Zhai\vspace{0.1cm}\\
 \cite{Xu}&$2$& $F(\lceil\frac{n}{2}\rceil,\lfloor\frac{n}{2}\rfloor)$ (see Fig.\ref{fig-W-D})& 2009&Xu, Hong, Shu and Zhai\vspace{0.1cm}\\
 \cite{Du}&$3$& $P(\frac{n}{3},3)$ for $3|n$, where $\frac{n}{3}\ge15$& 2013&Du and Shi\vspace{0.1cm}\\
 \cite{Du}&$4$& $P(\frac{n}{4},4)$ for $4|n$, where $\frac{n}{4}\ge24$& 2013&Du and Shi\vspace{0.1cm}\\ \hline
 \cite{Xu}&$\lceil\frac{n}{2}\rceil$& the path $P_n$&2009& Xu, Hong, Shu and Zhai\vspace{0.1cm}\\
 \cite{Xu}&$\lceil\frac{n}{2}\rceil+1$&
 $\left\{\begin{array}{ll}
 W_n& \mbox{ if $n$ is odd}\\
 D_n& \mbox{ if $n$ is even}
\end{array}\right.$ (see Fig.\ref{fig-W-D})&2009& Xu, Hong, Shu and Zhai\vspace{0.1cm}\\
\cite{Lou}& $n-4$& Theorem 1.2& 2022&Lou and Guo\vspace{0.1cm}\\
 \cite{Xu}&$n-3$&
Theorem 3.2
&2009& Xu, Hong, Shu and Zhai\vspace{0.1cm}\\
\cite{Xu}& $n-2$& $T(\lceil\frac{n-3}{2}\rceil,\lfloor\frac{n-3}{2}\rfloor)$ (see Fig.\ref{fig-W-D})& 2009&Xu, Hong, Shu and Zhai\vspace{0.1cm}\\
 \cite{Xu}&$n-1$& the star $K_{1,n-1}$& 2009&Xu, Hong, Shu and Zhai\\\hline
\end{tabular*}
  \caption{\small Some results}\label{result}
\end{table}
In  Tab.\ref{result} the first four lines list the results for  small independence number $\alpha\in \{1,2,3,4\}$, and the others for large $\alpha\in \{\lceil\frac{n}{2}\rceil, \lceil\frac{n}{2}\rceil+1, n-4, n-3,n-2,n-1\}$.
There leaves  a large unknown range for $\alpha$ to research.
Let $T^*$ be the minimizer graph of $\mathbb{G}_{n,\alpha}$ for given $k=n-\alpha$.

In this paper, we restrict on $\alpha\ge\lceil\frac{n}{2}\rceil$ to  characterize the minimizer graph $T^*$    and determine its spectral radius $\rho(T^*)$.
Recently, for $\alpha\ge\lceil\frac{n}{2}\rceil$, Lou and Guo \cite{Lou} gave a general result that the graph with  minimum spectral radius in $\mathbb{G}_{n,\alpha}$ is a tree.
Based on this result, we further give  the structural features for the  minimizer  graph, and then provide of a constructing theorem for  it.
Consequently we completely determine the minimizer  graphs in $\mathbb{G}_{n,\alpha}$ along with their spectral radii for any given $k=n-\alpha\le \frac{n}{2}$.

Our article is organized as follows.
In Section 2,  we give several necessary lemmas and concepts as well as notations.
In Section 3 we   characterize the structure of  the minimizer graph in $\mathbb{G}_{n,\alpha}$ for $k=n-\alpha\le \frac{n}{2}$   in detail (see Theorem \ref{thm-minimizer-main}).
In Section 4, we give  a construction theorem for the minimizer graphs in $\mathbb{G}_{n,\alpha}$ together with their  spectral radii for $k=n-\alpha\le \frac{n}{2}$ ( see Theorem \ref{thm-minimizer-spectral-radius}).
In Section 5, as an application, we  determine  the minimizer  graphs  in $\mathbb{G}_{n,\alpha}$  for $\alpha=n-1,n-2,n-3,n-4,n-5,n-6$   along with their spectral radii, the first four results are known in \cite{Xu,Lou} and the last two are new.

\section{Preliminaries}
In the section, we cite some useful lemmas and notations for the later use.

\begin{lem}[\cite{Cvetkovic1}]\label{lem-subgraph-radius}
If $H$ is the subgraph of  a connected graph $G$, then $\rho(H)\le \rho(G)$. Particularly, if $H$ is proper then $\rho(H)< \rho(G)$.
\end{lem}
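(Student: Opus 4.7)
The plan is to combine the Rayleigh quotient characterization of the spectral radius with the Perron--Frobenius theory of non-negative matrices. Since the adjacency matrix of a graph is symmetric, one has
\[
\rho(G) \;=\; \max_{x\neq 0}\frac{x^{T}A(G)x}{x^{T}x},
\]
and because $A(G)$ is a non-negative symmetric matrix, a Perron eigenvector of $A(G)$ for $\rho(G)$ can be chosen entrywise non-negative; if $G$ is connected then $A(G)$ is irreducible and this Perron eigenvector is in fact strictly positive.

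For the weak inequality, I would take a non-negative eigenvector $y$ of $A(H)$ for $\rho(H)$ and pad it with zeros on the vertices in $V(G)\setminus V(H)$ to obtain a vector $\tilde y$ on $V(G)$. Because the edges of $H$ form a subset of the edges of $G$ and all entries of $A(G)$ are non-negative, one has $\tilde y^{T}A(G)\tilde y \;\ge\; \tilde y^{T}A(H^{\dagger})\tilde y \;=\; \rho(H)\,\|\tilde y\|^{2}$, where $A(H^{\dagger})$ denotes $A(H)$ padded with zero rows and columns. The Rayleigh characterization then yields $\rho(G)\ge\rho(H)$.

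For the strict inequality when $H$ is a proper subgraph, I would argue by contradiction. Suppose $\rho(H)=\rho(G)$; then equality must hold throughout the estimate above, so $\tilde y$ is itself a Perron eigenvector of $A(G)$ with eigenvalue $\rho(G)$. Since $G$ is connected, $A(G)$ is irreducible, hence by Perron--Frobenius every eigenvector of $A(G)$ for $\rho(G)$ is a scalar multiple of a strictly positive vector. If $V(H)\subsetneq V(G)$, then $\tilde y$ vanishes on $V(G)\setminus V(H)$, a contradiction; if $V(H)=V(G)$ but there is an edge $uv\in E(G)\setminus E(H)$, then writing out the $u$-th coordinate of $A(G)\tilde y = \rho(G)\tilde y$ and comparing with $A(H)y=\rho(H)y$ forces $\tilde y_{v}=0$, again contradicting positivity.

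The statement is a classical Perron--Frobenius monotonicity result, and the only delicate point is the strict inequality: one must use connectedness of $G$ (not of $H$) to get irreducibility of $A(G)$, and then rule out both possibilities for ``proper''—either a missing vertex or a missing edge. Splitting on these two cases, as above, is the main (mild) obstacle; everything else is immediate from Rayleigh's principle and the non-negativity of $A(G)$.
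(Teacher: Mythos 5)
The paper does not prove this lemma---it is quoted verbatim from Cvetkovi\'{c}--Doob--Sachs as a classical fact---so there is no internal proof to compare against. Your argument is the standard and correct one: the Rayleigh quotient with a zero-padded non-negative eigenvector of $A(H)$ gives the weak inequality, and for strictness you correctly use irreducibility of $A(G)$ (connectedness of $G$, not $H$) to force the padded vector to be strictly positive, handling both ways $H$ can be proper (a missing vertex, or a missing edge $uv$ whose presence in the eigenvalue equation at $u$ forces $\tilde y_v=0$).
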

Let $G$ be a connected graph. By the Perron-Frobenius theorem, there is unique positive unit eigenvector of $A(G)$ corresponding to  $\rho(G)$, which is called the \emph{ Perron vector} of $G$, and denoted by $\mathbf{x}$,
the entry of $\mathbf{x}$ corresponding to vertex $u$ is denoted by $x_u$.

\begin{lem}[\cite{Wu}]\label{lem-perron-entry-radius}
Let $u$, $v$ be two distinct vertices of a connected graph $G$. Suppose  $w_1,w_2,...,w_t$ $(t\ge1)$ are some vertices of $N_G(v)\setminus N_G(u)$ and $\mathbf{x}$ is the  Perron vector of $G$. Let $G'=G-\{vw_i\mid i=1,2,...,t\}+\{uw_i\mid i=1,2,...,t\}$.  If $x_u\ge x_v$ then $\rho(G)<\rho(G')$.
\end{lem}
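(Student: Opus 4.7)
The plan is to use the Rayleigh quotient characterization $\rho(G') = \max_{\mathbf{y}\neq 0} \mathbf{y}^T A(G')\mathbf{y}/\mathbf{y}^T\mathbf{y}$ with the test vector $\mathbf{y}=\mathbf{x}$, the Perron vector of $G$. Since $\mathbf{x}$ is a positive unit vector and $A(G)\mathbf{x}=\rho(G)\mathbf{x}$, one already has $\rho(G)=\mathbf{x}^T A(G)\mathbf{x}$. Writing quadratic forms as sums over edges and noting that $G'$ differs from $G$ only by removing $\{vw_i\}$ and inserting $\{uw_i\}$, I would compute
\[
\mathbf{x}^T A(G')\mathbf{x}-\mathbf{x}^T A(G)\mathbf{x}=2\sum_{i=1}^{t}x_{w_i}(x_u-x_v)\ge 0,
\]
where nonnegativity uses the hypothesis $x_u\ge x_v$ together with $x_{w_i}>0$. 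The Rayleigh inequality $\rho(G')\ge \mathbf{x}^T A(G')\mathbf{x}$ then yields the weak bound $\rho(G')\ge \rho(G)$.

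The main obstacle is upgrading this to the \emph{strict} inequality, since the increment above can a priori vanish (e.g.\ if $x_u=x_v$). I would handle it by contradiction: assume $\rho(G')=\rho(G)$. Then $\mathbf{x}$ attains the Rayleigh maximum for $A(G')$, and by the extremal characterization $\mathbf{x}$ must be a top eigenvector of $A(G')$, so $A(G')\mathbf{x}=\rho(G)\mathbf{x}$. Subtracting $A(G)\mathbf{x}=\rho(G)\mathbf{x}$ gives $(A(G')-A(G))\mathbf{x}=\mathbf{0}$, and I would then read off two particular coordinates of this vector identity.

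Specifically, the $w_1$-coordinate of $(A(G')-A(G))\mathbf{x}=\mathbf{0}$ reads $x_u-x_v=0$, forcing $x_u=x_v$. Substituting this back, the $u$-coordinate reads $\sum_{i=1}^{t}x_{w_i}=0$, which is impossible since every Perron entry is strictly positive and $t\ge 1$. This contradiction gives $\rho(G')>\rho(G)$, as required. Along the way I would note the well-definedness of the switch: $w_i\ne v$ because $w_i\in N_G(v)$, and $w_i\ne u$ is implicit (otherwise $uw_i$ would be a loop); these are bookkeeping remarks rather than genuine difficulties. I would not need $G'$ to be connected for the argument, since the Rayleigh step only uses symmetry of $A(G')$ and the coordinate-wise step uses only linear algebra.
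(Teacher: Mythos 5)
Your proof is correct: the Rayleigh-quotient comparison $\rho(G')\ge \mathbf{x}^{T}A(G')\mathbf{x}=\rho(G)+2\sum_i x_{w_i}(x_u-x_v)\ge\rho(G)$ followed by the equality analysis (equality forces $A(G')\mathbf{x}=\rho(G)\mathbf{x}$, and the $u$-coordinate then gives $\rho(G)x_u+\sum_i x_{w_i}=\rho(G)x_u$, impossible since $x_{w_i}>0$) is exactly the standard argument for this classical lemma. The paper itself gives no proof — it cites the result from \cite{Wu} — and your write-up matches the usual proof found there; if anything, your observation that connectivity of $G'$ is not needed, because the extremal characterization of $\lambda_{\max}$ for a symmetric matrix already forces $\mathbf{x}$ into the top eigenspace, is a small clean improvement over versions that invoke Perron--Frobenius for $G'$. (One could even skip the $w_1$-coordinate step: the $u$-coordinate alone yields the contradiction.)
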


An \emph{internal path} of $G$ is a sequence of  vertices $v_1,v_2,...,v_s$ with $s\ge 2$ such that:\\
(i) the vertices in the sequence are distinct (except possibly $v_1=v_s$),\\
(ii) $v_i$ is adjacent to $v_{i+1}$ ($i=1,2,...,s-1$),\\
(iii) the vertex degrees satisfy $d(v_1)\ge3$, $d(v_2)=\cdots=d(v_{s-1})=2$ (unless $s=2$) and $d(v_s)\ge3$.
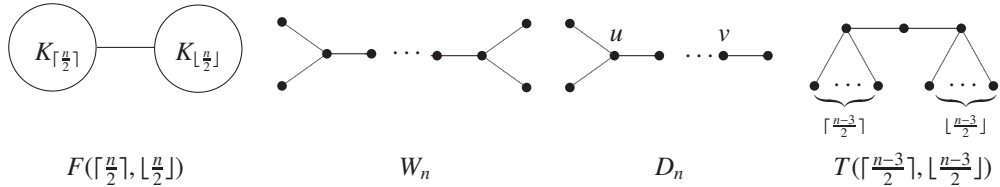
\begin{figure}[h]
\centering
\unitlength 0.85mm 
\linethickness{0.4pt}
\footnotesize
\ifx\plotpoint\undefined\newsavebox{\plotpoint}\fi 
\begin{picture}(154,27.831)(0,0)
\put(5.831,22){\circle{13.662}}
\put(12.831,22){\line(1,0){9}}
\put(28.662,21.831){\circle{13.662}}
\put(3,20){$K_{\lceil\frac{n}{2}\rceil}$}
\put(25,20){$K_{\lfloor\frac{n}{2}\rfloor}$}
\put(41.75,26){\circle*{1.5}}
\multiput(41.75,26)(.046979866,-.033557047){149}{\line(1,0){.046979866}}
\multiput(48.75,21)(-.046979866,-.033557047){149}{\line(-1,0){.046979866}}
\put(41.75,16){\circle*{1.5}}
\put(48.75,21){\line(1,0){7}}
\put(54.75,21){\line(1,0){1}}
\put(48.75,21){\circle*{1.5}}
\put(55.75,21){\circle*{1.5}}
\put(59,21){$\ldots$}
\put(65,20.75){\line(1,0){1}}
\put(66,20.75){\circle*{1.5}}
\put(66,20.75){\line(1,0){7}}
\put(73,20.75){\circle*{1.5}}
\multiput(73,20.75)(.046979866,.033557047){149}{\line(1,0){.046979866}}
\put(80,25.75){\circle*{1.5}}
\multiput(73,20.75)(.046979866,-.033557047){149}{\line(1,0){.046979866}}
\put(80,15.75){\circle*{1.5}}
\put(86.75,25.75){\circle*{1.5}}
\multiput(86.75,25.75)(.046979866,-.033557047){149}{\line(1,0){.046979866}}
\multiput(93.75,20.75)(-.046979866,-.033557047){149}{\line(-1,0){.046979866}}
\put(86.75,15.75){\circle*{1.5}}
\put(93.75,20.75){\line(1,0){7}}
\put(99.75,20.75){\line(1,0){1}}
\put(93.75,20.75){\circle*{1.5}}
\put(100.75,20.75){\circle*{1.5}}
\put(104.75,20.5){$\ldots$}
\put(110.75,20.75){\line(1,0){7}}
\put(116.75,20.75){\line(1,0){1}}
\put(110.75,20.75){\circle*{1.5}}
\put(117.75,20.75){\circle*{1.5}}
\put(130,25){\circle*{1.5}}
\put(139,25){\circle*{1.5}}
\put(148,25){\circle*{1.5}}
\multiput(130,25)(.033557047,-.060402685){149}{\line(0,-1){.060402685}}
\put(135,16){\circle*{1.5}}
\multiput(148,25)(-.033557047,-.060402685){149}{\line(0,-1){.060402685}}
\multiput(130,25)(-.033557047,-.060402685){149}{\line(0,-1){.060402685}}
\put(125,16){\circle*{1.5}}
\put(143,16){\circle*{1.5}}
\multiput(148,25)(.033557047,-.060402685){149}{\line(0,-1){.060402685}}
\put(153,16){\circle*{1.5}}
\put(130,25){\line(1,0){9}}
\put(139,25){\line(1,0){9}}
\put(128,16){$\ldots$}
\put(146,16){$\ldots$}
\put(125,15){$\underbrace{}_{\lceil\frac{n-3}{2}\rceil}$}
\put(144,15){$\underbrace{}_{\lfloor\frac{n-3}{2}\rfloor}$}
\put(93,23){$u$}
\put(110,23){$v$}
\put(100,2){$D_n$}
\put(60,2){$W_n$}
\put(8,2){$F(\lceil\frac{n}{2}\rceil,\lfloor\frac{n}{2}\rfloor)$}
\put(128,2){$T(\lceil\frac{n-3}{2}\rceil,\lfloor\frac{n-3}{2}\rfloor)$}
\end{picture}
  \caption{\footnotesize{Some minimizer graphs}}\label{fig-W-D}
\end{figure}

\begin{lem}[\cite{Hoffman}]\label{lem-subdividing-internal-radius}
Suppose that $G\not= W_n$ (see Fig.\ref{fig-W-D}) and $uv$ is an edge on  an internal path of $G$. Let $G_{uv}$ be the graph obtained from $G$ by the subdivision of  the edge $uv$. Then $\rho(G_{uv})<\rho(G)$.
\end{lem}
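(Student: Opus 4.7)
The plan is to exploit the Perron eigenvector of $G$ together with the degree-two structure along the internal path. Write $\rho:=\rho(G)$ and let $\mathbf{x}$ be the positive unit Perron eigenvector. Because $v_2,\dots,v_{s-1}$ all have degree two, the eigenvalue equation at these vertices collapses to the second-order linear recurrence $\rho\,x_{v_j}=x_{v_{j-1}}+x_{v_{j+1}}$. Substituting $\rho=r+r^{-1}$ turns this into a Chebyshev-type recurrence with general solution $x_{v_j}=A\,r^{j}+B\,r^{-j}$, so the Perron entries along the path are determined up to two constants once $\rho$ and the ``reflection conditions'' at $v_1,v_s$ (coming from the two branches attached at these endpoints) are specified.

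The first step is to rule out $r=1$, that is, $\rho=2$. By Smith's classical classification of connected graphs of index exactly $2$ (the extended Dynkin diagrams $\tilde A_n,\tilde D_n,\tilde E_6,\tilde E_7,\tilde E_8$), the only one possessing an internal path in the sense of the definition is $\tilde D_n=W_n$; combined with Lemma~\ref{lem-subgraph-radius}, which forces strict inequality on proper subgraphs, the assumption $G\neq W_n$ yields $\rho>2$ and hence $r>1$, so the exponentials $r^{j}$ and $r^{-j}$ are genuinely independent.

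The main step is a monotonicity argument. The reflection conditions at $v_1$ and $v_s$ depend only on $\rho$ and on the two branches rooted at $v_1,v_s$, not on the length $s$ of the internal path. Matching the two-sided exponential solution to these reflection conditions yields a single compatibility equation of the schematic form $F(\rho)\,r^{s-1}=H(\rho)\,r^{-(s-1)}$, and $\rho(G)$ is the largest root with $r>1$. After subdividing $uv$, the same equation arises with $s$ replaced by $s+1$ and $F,H$ unchanged; since $r^{2(s-1)}$ is strictly increasing in $s$ for $r>1$, the largest compatible $r$ must strictly decrease, giving $\rho(G_{uv})<\rho(G)$.

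The main obstacle I anticipate is controlling $F$ and $H$: one has to make sure they are not pathological functions of $\rho$ that could offset the change coming from $r^{2(s-1)}$. The natural way to handle this is to identify $F$ and $H$ with ratios of principal characteristic polynomials of the two branch subgraphs at $v_1,v_s$, which are analytic with predictable sign on $(2,\infty)$; the monotonicity argument then goes through cleanly. The boundary case $r=1$, where this analysis degenerates, is exactly the $W_n$ exception in the statement, which explains why excluding that single graph is the precise price for strict inequality.
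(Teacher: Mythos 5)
This lemma is not proved in the paper at all: it is quoted as the classical Hoffman--Smith subdivision theorem from \cite{Hoffman}, so there is no internal proof to compare against, and your attempt has to stand on its own as a proof of a known but nontrivial result. Your first step is sound: a connected graph with $\rho\le 2$ is a Smith graph or a proper subgraph of one, and among all of these only $W_n=\tilde D_n$ contains an internal path (the cycles have no vertex of degree $\ge 3$, and $\tilde E_6,\tilde E_7,\tilde E_8$ and the proper subgraphs $A_n,D_n,E_6,E_7,E_8$ each have at most one branch vertex), so $G\ne W_n$ forces $\rho(G)>2$ and $r>1$. The recurrence $\rho x_{v_j}=x_{v_{j-1}}+x_{v_{j+1}}$ with general solution $Ar^j+Br^{-j}$ is also correct.

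The genuine gap is in your main step. Reducing the eigenvalue condition to a schematic equation $F(\rho)r^{s-1}=H(\rho)r^{-(s-1)}$ and then asserting that ``since $r^{2(s-1)}$ is strictly increasing in $s$, the largest compatible $r$ must strictly decrease'' is not a valid inference: increasing $s$ changes one side of an equation in which $\rho$ (hence $r$, $F(\rho)$ and $H(\rho)$) all vary simultaneously, and without knowing the signs of $F$ and $H$ at and above $\rho(G)$, and some monotonicity of $H/F$ on $(2,\infty)$, the largest root could a priori move either way or new roots could appear above the old one. You explicitly flag this as ``the main obstacle'' and then dispose of it by saying the identification of $F,H$ with ratios of characteristic polynomials of the rooted branches makes the argument ``go through cleanly''---but that identification and the sign analysis \emph{are} the proof; everything before it is routine. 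Two further loose ends: the paper's definition of internal path allows $v_1=v_s$ (a closed internal path), for which your two-branch ``reflection condition'' setup does not apply as stated; and the case $s=2$ (no interior degree-two vertices) needs to be checked separately since the recurrence is then vacuous. For reference, the standard Hoffman--Smith argument avoids all of this by exploiting the strict convexity of the Perron entries along the internal path when $\rho>2$ (from $x_{v_{j-1}}+x_{v_{j+1}}=\rho x_{v_j}>2x_{v_j}$) and making a direct eigenvector comparison between $G$ and $G_{uv}$.
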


\begin{lem}[\cite{eigenspace}]\label{vector}
Let $G$ be a connected graph with the adjacent matrix $A(G)$ and let the vector $\mathbf{0}<\mathbf{y}\in \mathbb{R}^{n(G)}$ (i.e., any entries of $\mathbf{y}$ is non-negative and $\mathbf{y}\not=\mathbf{0}$). If $A(G)\mathbf{y}\le \lambda\mathbf{y}$ then $\rho(G)\le \lambda$.
\end{lem}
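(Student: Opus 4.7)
The plan is to exploit the Perron vector of $G$ as a test vector against the hypothesis $A(G)\mathbf{y}\le\lambda\mathbf{y}$. Since $G$ is connected, $A(G)$ is a non-negative irreducible matrix, so the Perron--Frobenius theorem supplies a strictly positive eigenvector $\mathbf{x}>\mathbf{0}$ satisfying $A(G)\mathbf{x}=\rho(G)\mathbf{x}$ (this is the Perron vector already named just before Lemma~\ref{lem-perron-entry-radius}).

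Next I would estimate the bilinear form $\mathbf{x}^{T}A(G)\mathbf{y}$ in two complementary ways. On one hand, symmetry of $A(G)$ gives
\[
\mathbf{x}^{T}A(G)\mathbf{y} \;=\; (A(G)\mathbf{x})^{T}\mathbf{y} \;=\; \rho(G)\,\mathbf{x}^{T}\mathbf{y}.
\]
On the other hand, taking the inner product of the vector inequality $A(G)\mathbf{y}\le\lambda\mathbf{y}$ with the non-negative vector $\mathbf{x}$ preserves the direction of the inequality and yields
\[
\mathbf{x}^{T}A(G)\mathbf{y} \;\le\; \mathbf{x}^{T}(\lambda\mathbf{y}) \;=\; \lambda\,\mathbf{x}^{T}\mathbf{y}.
\]
Combining these two relations produces $(\rho(G)-\lambda)\,\mathbf{x}^{T}\mathbf{y}\le 0$.

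To finish I only need $\mathbf{x}^{T}\mathbf{y}>0$, which follows at once: since $\mathbf{x}$ has strictly positive entries while $\mathbf{y}$ is non-negative and non-zero, the sum $\sum_{v}x_{v}y_{v}$ has at least one strictly positive summand and no negative ones. Dividing through by this positive quantity gives $\rho(G)\le\lambda$, as required. I do not foresee any genuine obstacle here; the only point requiring care is to recall that passing from the entry-wise vector inequality to the scalar inequality relies on $\mathbf{x}$ being entry-wise non-negative, a property guaranteed (indeed strengthened to strict positivity) by Perron--Frobenius together with the connectedness of $G$.
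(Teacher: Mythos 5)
Your argument is correct and complete: the paper itself gives no proof of this lemma (it is quoted from \cite{eigenspace}), and the bilinear-form computation you carry out --- pairing the hypothesis $A(G)\mathbf{y}\le\lambda\mathbf{y}$ against the strictly positive Perron vector $\mathbf{x}$, using symmetry of $A(G)$ to get $\rho(G)\,\mathbf{x}^{T}\mathbf{y}\le\lambda\,\mathbf{x}^{T}\mathbf{y}$, and observing $\mathbf{x}^{T}\mathbf{y}>0$ --- is the standard and essentially canonical proof of this Perron--Frobenius-type bound. No gaps; the one delicate point (that multiplying an entrywise inequality by a non-negative vector preserves it, and that positivity of $\mathbf{x}$ forces $\mathbf{x}^{T}\mathbf{y}>0$ for non-zero non-negative $\mathbf{y}$) is exactly the one you address.
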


Denote by $L(T)$ the set of  all leaves of  a tree $T\not=P_{2t}$,
and let $L(P_{2t})=\{u\}$, where  $u$ is  one end vertex of $P_{2t}$.
\begin{lem}[\cite{Lu}]
 For every tree $T$ , there exists a maximum independent set $S(T)$ of $T$ such that
$L(T )\subseteq S(T )$.
\end{lem}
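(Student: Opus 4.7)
The plan is to split on whether $T$ is one of the exceptional paths $P_{2t}$.

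\emph{Case 1: $T = P_{2t}$.} Since the definition sets $L(T) = \{u\}$ for a single endpoint $u$, I would label the path $u = v_1, v_2, \ldots, v_{2t}$ and simply take $S(T) = \{v_1, v_3, \ldots, v_{2t-1}\}$. This set is independent, of cardinality $t = \alpha(P_{2t})$, and contains $u$, which is all that is required.

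\emph{Case 2: $T \neq P_{2t}$.} I would use an exchange argument tailored to maximize the number of leaves in the independent set. Choose $S$ to be a maximum independent set of $T$ among those containing the largest number of leaves, and claim $L(T) \subseteq S$. Suppose some leaf $v$ lies outside $S$, and let $w$ be its unique neighbor; the maximality of $S$ forces $w \in S$, for otherwise $S \cup \{v\}$ would be a strictly larger independent set. Then $S' := (S \setminus \{w\}) \cup \{v\}$ is again independent---the only neighbor of $v$ was $w$, which has just been removed---and $|S'| = |S|$, so $S'$ is also a maximum independent set. Comparing leaf counts, $S'$ gains the leaf $v$ and loses $w$, so it has strictly more leaves than $S$ unless $w$ itself is a leaf. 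But two mutually adjacent leaves $v, w$ in a connected tree force $T = P_2$, which is excluded from Case 2. This contradicts the choice of $S$, proving $L(T) \subseteq S$.

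The main thing to be careful about is the edge-case $T = P_2$: it must be absorbed into the excluded family $\{P_{2t}\}_{t \ge 1}$, for otherwise the swap degenerates (both endpoints of the swap are leaves, and we cannot produce a leaf-increasing exchange). Beyond that single bookkeeping point, the argument is entirely elementary, so I expect no substantive obstacle.
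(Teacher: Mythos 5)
Your proof is correct. Note that the paper does not prove this statement at all: it is quoted from the reference of Ji and Lu as a known result, so there is no in-paper argument to compare against. Your treatment is the standard one -- the trivial alternating set for the even path (where, by the paper's convention, $L(P_{2t})$ is a single designated endpoint) and, otherwise, the exchange argument on a maximum independent set chosen to contain as many leaves as possible -- and your isolation of the degenerate case $P_2$, where the swapped-out neighbour $w$ would itself be a leaf, is exactly the one point that needs care; it is correctly absorbed into the excluded family $\{P_{2t}\}$.
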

 Later we always assume that $S(T)$ is  the maximum independent set of  $T$  such that $L(T )\subseteq S(T )$.

\begin{lem}[\cite{Lou}]\label{lem-diam}
Let $G$ be a connected graph with order $n$ and $\alpha(G)\ge \lceil \frac{n}{2}\rceil$. Then $diam(G)\le 2(n-\alpha(G))$.
\end{lem}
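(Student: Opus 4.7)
The plan is to bound $\alpha(G)$ from above in terms of $\mathrm{diam}(G)$ by decomposing a maximum independent set along a diametral path. Set $d=\mathrm{diam}(G)$ and choose a shortest path $P\colon v_0 v_1\cdots v_d$ realizing the diameter. The key observation is that a shortest path is automatically induced, so the subgraph $G[V(P)]$ is just $P_{d+1}$ and its independence number is exactly $\lceil (d+1)/2\rceil$.

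Now take any maximum independent set $S\subseteq V(G)$, so $|S|=\alpha(G)$. Partition it as $S=(S\cap V(P))\sqcup(S\setminus V(P))$. Since $S\cap V(P)$ is independent in the induced path $P_{d+1}$, we get
\begin{equation*}
|S\cap V(P)|\le \left\lceil\tfrac{d+1}{2}\right\rceil,
\end{equation*}
while trivially $|S\setminus V(P)|\le n-(d+1)$. Adding these yields the central inequality
\begin{equation*}
\alpha(G)\;\le\;\left\lceil\tfrac{d+1}{2}\right\rceil+n-d-1.
\end{equation*}

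To finish, split on the parity of $d$. If $d=2k$ is even, then $\lceil (d+1)/2\rceil=k+1$ and the inequality simplifies to $\alpha(G)\le n-k$, giving $d=2k\le 2(n-\alpha(G))$. If $d=2k-1$ is odd, then $\lceil (d+1)/2\rceil=k$, so $\alpha(G)\le n-k$, and hence $d=2k-1<2k\le 2(n-\alpha(G))$. In either case $\mathrm{diam}(G)\le 2(n-\alpha(G))$, which is the desired conclusion.

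There is no real obstacle here; the only subtlety is remembering that the diametral path chosen is induced (so we may bound its independent sets exactly). Notice that the argument does not actually use the hypothesis $\alpha(G)\ge\lceil n/2\rceil$ — the bound holds for any connected graph — so this hypothesis appears only because the surrounding paper restricts attention to the regime $n-\alpha\le n/2$ where the resulting bound is the meaningful one.
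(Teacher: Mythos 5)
Your proof is correct: the paper only cites this lemma from \cite{Lou} without reproducing an argument, and your diametral-path counting — bounding $|S\cap V(P)|$ by $\alpha(P_{d+1})=\lceil (d+1)/2\rceil$ and $|S\setminus V(P)|$ by $n-d-1$, then splitting on the parity of $d$ — is the standard proof of this fact. Your observation that the hypothesis $\alpha(G)\ge\lceil n/2\rceil$ is not actually needed is also accurate; it is only there because the paper works in that regime.
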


Let $G=(A,B)$ be a bipartite graph with $A=\{u_1,\ldots,u_t\}$ and let $G'=G\circ (l(u_1),\ldots,l(u_t))$ be a graph obtained from $G$ by joining $u_i$  with $l(u_i)\ge0$ new pendant vertices for
$1\le i \le t$.
Particularly, we denote  $G'=G\circ  l\mathbf{1}_{A}$ if $l(u_1)=\cdots=l(u_t)=l$ is a constant.
The following lemma is crucial for our main result.
\begin{lem}[\cite{Csikvari}]\label{lem-bipartite}
Let $G=(A,B)$ be a bipartite graph  and let $G'=G\circ l\mathbf{1}_{A}$.  Then $\rho(G')=\sqrt{\rho^2(G)+l}$.
\end{lem}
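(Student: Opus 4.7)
The plan is to exploit the bipartite structure of both $G$ and $G'$ by reducing the computation of $\rho(G')^2$ to a suitably shifted Gram matrix. First I would set up biadjacency matrices: let $M$ be the $|A|\times|B|$ biadjacency matrix of $G$ with respect to the bipartition $(A,B)$. The new graph $G'$ remains bipartite with parts $A$ on one side and $B \cup C$ on the other, where $C$ is the set of $l|A|$ new pendant vertices, with each vertex of $A$ adjacent to exactly $l$ of them. Let $P$ be the $|A|\times|C|$ biadjacency matrix recording these pendant attachments. Then the full biadjacency matrix of $G'$ is the concatenation $\tilde{M}=(M\mid P)$, and since each row of $P$ has $l$ ones while distinct rows have disjoint supports, one obtains immediately that $PP^T = lI_{|A|}$.

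Next I would use the standard identity $\rho(H)^2 = \lambda_{\max}(NN^T)$ valid for any bipartite graph $H$ with biadjacency matrix $N$. Applying this to $G'$ gives
\begin{equation*}
\rho(G')^2 \;=\; \lambda_{\max}\!\left(\tilde{M}\tilde{M}^T\right) \;=\; \lambda_{\max}\!\left(MM^T + PP^T\right) \;=\; \lambda_{\max}\!\left(MM^T + lI_{|A|}\right),
\end{equation*}
and applying it to $G$ itself gives $\rho(G)^2 = \lambda_{\max}(MM^T)$. Since $MM^T$ is symmetric, adding $lI$ shifts every eigenvalue by exactly $l$, so
\begin{equation*}
\lambda_{\max}\!\left(MM^T + lI_{|A|}\right) \;=\; \lambda_{\max}(MM^T) + l \;=\; \rho(G)^2 + l.
\end{equation*}
Combining these equalities yields $\rho(G')^2 = \rho(G)^2 + l$, which is the desired identity.

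If one prefers a Perron-style argument instead of invoking the identity $\rho(H)^2=\lambda_{\max}(NN^T)$, the same conclusion can be reached by taking the Perron vector $\mathbf{x}$ of $G$ with restrictions $\mathbf{x}_A,\mathbf{x}_B$ satisfying $M\mathbf{x}_B=\rho(G)\mathbf{x}_A$ and $M^T\mathbf{x}_A=\rho(G)\mathbf{x}_B$, and then extending it to $G'$ by assigning to each new pendant attached to $u_i\in A$ the value $x_{u_i}/\sqrt{\rho(G)^2+l}$; a direct verification of $A(G')\mathbf{y}=\sqrt{\rho(G)^2+l}\,\mathbf{y}$ then follows by checking the eigenvalue equation at vertices in $A$, $B$, and $C$ separately, and Lemma \ref{vector} together with Lemma \ref{lem-perron-entry-radius}-type positivity arguments certify that $\sqrt{\rho(G)^2+l}$ is indeed the spectral radius.

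The only genuinely delicate point is the bipartiteness: the identity $\rho(G)^2=\lambda_{\max}(MM^T)$ fails for non-bipartite graphs, so one must be careful to use the full biadjacency matrix $\tilde{M}$ on the correct bipartition of $G'$. Once the block identification $PP^T=lI$ is made, the rest is a one-line eigenvalue shift, so the main (and only) obstacle is translating between the two bipartite viewpoints cleanly; everything else is linear algebra.
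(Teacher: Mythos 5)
The paper does not prove this lemma at all --- it is quoted from Csikv\'{a}ri's paper on integral trees --- so there is no internal proof to compare against; what you have written is a self-contained argument, and your main line of reasoning is correct. The bipartition of $G'$ as $(A,\,B\cup C)$ is right, the identification $PP^T=lI_{|A|}$ is right (each pendant has a unique neighbour in $A$, so distinct rows of $P$ have disjoint supports), the identity $\rho(H)^2=\lambda_{\max}(NN^T)$ follows from squaring the block form $\left(\begin{smallmatrix}0&N\\ N^T&0\end{smallmatrix}\right)$ and the fact that $NN^T$ and $N^TN$ have the same nonzero spectrum, and the final eigenvalue shift $\lambda_{\max}(MM^T+lI)=\lambda_{\max}(MM^T)+l$ is immediate. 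One small correction to your optional Perron-style alternative: if you set $y_u=x_u$ on $A$ and $y_w=x_{u_i}/\mu$ on the pendants with $\mu=\sqrt{\rho(G)^2+l}$ but leave the $B$-coordinates equal to $x_v$, the eigenvalue equation at a vertex of $A$ reads $\rho(G)x_{u_i}+l\,x_{u_i}/\mu=\mu\,x_{u_i}$, which forces $\mu=\rho(G)$ and fails for $l>0$. You must also rescale the $B$-side by the factor $\rho(G)/\mu$; with $y_v=(\rho(G)/\mu)x_v$ for $v\in B$ the equation checks out at all three types of vertices and Perron--Frobenius (applied componentwise if $G'$ is disconnected) identifies $\mu$ as the spectral radius. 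This does not affect your primary proof, which stands as written.
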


\section{The structure of the minimizer graph in $\mathbb{G}_{n,\alpha}$ }
In this section, we first cite several lemmas in \cite{Lou} that describe  the structure and properties for minimizer graph, and then we provide a series of propositions which together give some new characters of the minimizer graph's structure.  We first quote a nice result from \cite{Lou} which determines the shape of the minimizer graph.
\begin{lem}[\cite{Lou}, Theorem 1.2]\label{thm-extremal-graph-tree}
Let $G$ be the minimizer graph in $\mathbb{G}_{n,\alpha}$, where  $\alpha\ge \lceil\frac{n}{2}\rceil$. Then $G$ is a tree.
\end{lem}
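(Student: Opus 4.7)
The plan is a proof by contradiction: assume the minimizer $G^*\in\mathbb{G}_{n,\alpha}$ contains a cycle, and produce a graph $G'\in\mathbb{G}_{n,\alpha}$ with $\rho(G')<\rho(G^*)$. The basic mechanism is Lemma \ref{lem-subgraph-radius}: deleting any edge that lies on a cycle yields a proper connected spanning subgraph with strictly smaller spectral radius. The only delicate point is that the deletion must not change the independence number.

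I would begin by fixing a maximum independent set $S$ with $|S|=\alpha$ and setting $T=V(G^*)\setminus S$, so $|T|=n-\alpha\le\lfloor n/2\rfloor\le|S|$. Since $S$ is independent, every edge of $G^*$ meets $T$, and by the maximality of $S$ every vertex of $T$ has at least one neighbor in $S$. By Lemma \ref{lem-diam}, the diameter of $G^*$ is at most $2|T|$, which keeps $G^*$ rather ``short.'' Now pick any cycle $C=v_1v_2\cdots v_\ell v_1$ in $G^*$. For an edge $e=uv\in E(C)$, either $\alpha(G^*-e)=\alpha$ (in which case we are done: $G'=G^*-e$ satisfies $\rho(G')<\rho(G^*)$ by Lemma \ref{lem-subgraph-radius} and lies in $\mathbb{G}_{n,\alpha}$, contradicting minimality), or $\alpha(G^*-e)\ge\alpha+1$, and any witnessing independent set $S_e$ in $G^*-e$ of size $\alpha+1$ must contain both $u$ and $v$ (otherwise $S_e$ would already be independent in $G^*$, contradicting $\alpha(G^*)=\alpha$).

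The heart of the argument, and the step I expect to be the main obstacle, is showing that at least one edge of $C$ is of the first (good) type. Since $S$ is independent, the cycle $C$ contains at least $\lceil\ell/2\rceil$ vertices of $T$, and the $T$-vertices on $C$ are forced by the ``every edge bad'' assumption into a very rigid configuration: for every $e=uv\in E(C)$ we would obtain a size-$(\alpha+1)$ independent set $S_e\supseteq\{u,v\}$ in $G^*-e$, whose interaction with $S$ is controlled by the maximality of $S$. Varying $e$ around $C$ and exploiting the inequality $|T|\le|S|$ (which is where $\alpha\ge\lceil n/2\rceil$ enters crucially), I would combine the $S_e$'s to produce an independent set of size $\alpha+1$ in $G^*$ itself, contradicting $\alpha(G^*)=\alpha$. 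Choosing $C$ to be a shortest cycle should make this combinatorial book-keeping manageable: a chord would supply a shorter cycle and the diameter bound restricts how many $S$-vertices can sit on $C$.

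If a purely combinatorial treatment fails on an extremal configuration, such as an even cycle that alternates strictly between $S$ and $T$ with each $T$-vertex using both of its cycle neighbors as its only $S$-neighbors, I would fall back on Lemma \ref{lem-perron-entry-radius}. Comparing Perron-vector entries along $C$, pick vertices $u,v\in V(C)$ with $x_u\ge x_v$, and rotate an edge incident to $v$ onto $u$ (with a symmetric compensating move that preserves the pair $(S,T)$, and hence the independence number). The resulting graph is connected, still has independence number $\alpha$, and by Lemma \ref{lem-perron-entry-radius} strictly smaller spectral radius than $G^*$. Either route yields the desired contradiction, so $G^*$ must be a tree.
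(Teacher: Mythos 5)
The paper does not actually prove this statement; it is quoted verbatim from Lou and Guo (\cite{Lou}, Theorem 1.2), whose proof is a substantially more involved structural argument and not the edge-deletion scheme you propose. So your attempt has to stand on its own, and it does not: the step you yourself flag as ``the heart of the argument'' --- that at least one edge of the chosen cycle $C$ can be deleted without increasing $\alpha$ --- is never proved, only described as something you ``would'' do by combining the sets $S_e$. Worse, that claim is not merely unproven but false under the hypotheses you allow yourself (connectivity, a maximum independent set $S$ with $|V\setminus S|\le |S|$, maximality of $S$, the diameter bound). Take $n=6$ with vertices $a,b,c,d,w_1,w_2$ and edges $ab,bc,ca,cd,dw_1,dw_2$. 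This graph is connected, has $\alpha=3=\lceil 6/2\rceil$, and its unique cycle is the triangle $abc$; yet $\{a,b,w_1,w_2\}$, $\{a,c,w_1,w_2\}$ and $\{b,c,w_1,w_2\}$ are independent in $G-ab$, $G-ac$ and $G-bc$ respectively, so deleting \emph{any} cycle edge raises the independence number to $4$. Hence no argument using only the combinatorial data you invoke can establish your key claim; you would have to bring the spectral minimality of $G^*$ into the combinatorial step itself, which your sketch does not do.

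The fallback you offer is not a repair. Lemma \ref{lem-perron-entry-radius} moves edges from $v$ to $u$ and always \emph{increases} the spectral radius of the modified graph relative to the original in the direction you need only if applied in reverse, and in any case your ``symmetric compensating move that preserves the pair $(S,T)$'' is never specified; preserving $S$ and $T$ as sets does not by itself preserve $\alpha$ (a maximum independent set of the new graph need not be related to $S$), nor does it guarantee connectivity. You also restrict this fallback to a single extremal configuration rather than to all the cases your main argument fails to cover. As it stands the proposal establishes only the easy implication (if a deletable cycle edge exists, minimality is contradicted via Lemma \ref{lem-subgraph-radius}) and leaves the genuinely hard part --- which is exactly where the published proof does its work --- unresolved.
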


A vertex $v$ of a tree $T\not=P_{n},D_n$ (see Fig.\ref{fig-W-D}) is called a \emph{control vertex} if $d(v)\ge3$  and $T-v$ contains at least $d(v)-1$  paths.
Particularly, $v_2$ and $v_{2t}$ are defined as the control vertices for $P_{2t+1}=v_1v_2\cdots v_{2t}v_{2t+1}$,
$v_2$ and $v_{2t}$  the control vertices for $P_{2t}=v_1v_2\cdots v_{2t}$,
$v$ and $u$  the control vertices for $D_{n}$ shown in Fig.\ref{fig-W-D}.
Additionally, denote $L(P_{2t})=\{v_1\}$.
A path $P$ of $T$ is called \emph{control path} if its two ends  are control vertices.
Particularly, if a tree $T$ contains  only one control vertex, then the  control path  of $T$ is $P_1$.
 It is clear that  a star $K_{1,n-1}$, $P_{n}$ and $D_{n}$ have unique control path as well as $W_n$, and the diameter path of $T$ must contain a proper control path.

Under the assumption of $\lceil\frac{n}{2}\rceil+2\le \alpha(T)\le n-2$, the authors defined in \cite{Lou} the concepts that a \emph{branching vertex} $u\in T$ is a vertex with $d(u)\ge 3$ and an   \emph{end branching vertex} is such a branching vertex $u$  which  does not lie on any path between other two branching vertices. Obviously, an end branching vertex must be  control vertex, but not vice versa. The control vertex extends the concept of end branching vertex to $P_{n}$, $D_n$ and $K_{1,n-1}$  because of $\alpha(P_{n})=\lceil\frac{n}{2}\rceil$, $\alpha(D_{n})=\lceil\frac{n}{2}\rceil+1$ and $\alpha(K_{1,n-1})=n-1$. The authors in \cite{Lou} gave three propositions about end branching vertex, and here we summarize them in the following lemma.

\begin{lem}[\cite{Lou}, Propositions 3.1, 3.2 and 3.3]\label{pro-branching-vertices}
Let $T$ be  a minimizer  graph in  $\mathbb{G}_{n,\alpha}$, where $\lceil\frac{n}{2}\rceil+2\le \alpha\le n-2$. Then \\
 (i) $T$ has at least two end branching vertices,\\
 (ii)  Every end branching vertex only  attaches some  leaves (by another words,  end branching vertex $u$ attaches exactly $d(u)-1$  leaves),\\
 (iii) Let $v_0$, $v_q$ be two end branching vertices of $T$ and $P_{q+1}=v_0v_1\cdots v_q$  be a path that connects $v_0$ and $v_q$. Then $q$ is even and $v_i\in S(T)$ for odd $i\in[1, q-1]$.
\end{lem}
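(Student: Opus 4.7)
The plan is to handle the three claims successively, all by contradiction with the minimality of $\rho(T)$, using the perturbation toolkit provided by Lemmas~\ref{lem-subgraph-radius}--\ref{lem-subdividing-internal-radius} together with the basic observations that $T$ is a tree (Lemma~\ref{thm-extremal-graph-tree}) and that $L(T)\subseteq S(T)$.

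For part (i), I would first rule out that $T$ is a path: $\alpha(P_n)=\lceil n/2\rceil<\lceil n/2\rceil+2$, so $T$ must contain at least one branching vertex. If $T$ had exactly one \emph{end} branching vertex, then contracting all degree-2 internal paths would leave a tree whose leaves are precisely the end branching vertices; a tree with a single leaf is a single vertex, forcing $T$ itself to be a spider with one center $v$. I would then derive a contradiction by exhibiting a competitor tree $T'\in\mathbb{G}_{n,\alpha}$ with $\rho(T')<\rho(T)$: since $\alpha\le n-2$ and $\alpha\ge\lceil n/2\rceil+2$ force at least two legs of length $\ge 2$ and $\mathrm{diam}(T)\le 2k$ (Lemma~\ref{lem-diam}), Lemma~\ref{lem-subdividing-internal-radius} applied to an edge on a suitable internal path (or a Kelmans-type move via Lemma~\ref{lem-perron-entry-radius}) produces such a $T'$ while preserving $\alpha$ by an explicit check on the independence number of spiders.

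For part (ii), let $u$ be an end branching vertex. Since $u$ is end branching, at most one component of $T-u$ contains another branching vertex; every other component is a path. Suppose, for contradiction, that some such path $uw_1w_2\cdots$ has length $\ge 2$. I would compare the Perron entries at $u$ and at $w_1$: applying Lemma~\ref{lem-perron-entry-radius} to move $w_2$ (and its descendants) from $w_1$ onto either $u$ or an appropriate leaf neighbor of $u$, whichever has the larger Perron entry, yields a strictly smaller spectral radius. The critical accounting is that the move can be arranged so that $\alpha$ is unchanged — this is where one uses $L(T)\subseteq S(T)$ to track how the leaves around $u$ contribute to $S(T)$, and it is the main technical obstacle of the whole lemma.

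For part (iii), consider a shortest path $v_0v_1\cdots v_q$ between end branching vertices $v_0$ and $v_q$. By definition of end branching, no interior $v_i$ ($1\le i\le q-1$) can be a branching vertex, so $d(v_i)=2$. By part (ii), $v_0$ and $v_q$ attach only leaves in addition to $v_1$ and $v_{q-1}$, so the leaves at $v_0,v_q$ belong to $S(T)$ and hence $v_0,v_q\notin S(T)$. On the path $v_0v_1\cdots v_q$, the constraint ``$v_0,v_q\notin S(T)$ and alternation of $S(T)$'' forces an independent set of size $\lfloor q/2\rfloor$ drawn from $\{v_1,\ldots,v_{q-1}\}$; maximality of $\alpha(T)$ together with the requirement $L(T)\subseteq S(T)$ precludes the case $q$ odd (which would yield a strictly smaller path-contribution than a parity-even arrangement available after a local swap), so $q$ is even and $v_i\in S(T)$ for every odd $i\in[1,q-1]$.
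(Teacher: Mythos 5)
The first thing to note is that the paper does not prove this statement at all: Lemma~\ref{pro-branching-vertices} is imported verbatim from \cite{Lou} (Propositions 3.1, 3.2 and 3.3 there), so there is no in-paper argument to compare yours against. Judged on its own terms, your sketch has genuine gaps in all three parts. In part (i), after reducing to the case that $T$ is a spider you cannot invoke Lemma~\ref{lem-subdividing-internal-radius}: a spider has only one vertex of degree at least $3$, hence no internal path in the sense required, and the promised ``explicit check on the independence number of spiders'' that would produce a competitor $T'\in\mathbb{G}_{n,\alpha}$ with $\rho(T')<\rho(T)$ is never carried out. In part (ii) the direction of Lemma~\ref{lem-perron-entry-radius} is against you: moving neighbours onto the vertex with the \emph{larger} Perron entry yields $\rho(G)<\rho(G')$, i.e.\ it \emph{increases} the spectral radius, so the move you describe cannot directly produce the smaller-radius competitor; to use the lemma one must exhibit a graph $T'$ from which $T$ is obtained by such a move and verify the Perron inequality in $T'$, which is exactly the step you flag as ``the main technical obstacle'' and leave unresolved.

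Part (iii) contains an outright false step: you assert that no interior vertex $v_i$ of the path between two end branching vertices can be a branching vertex, ``so $d(v_i)=2$.'' The definition only prevents such a $v_i$ from being an \emph{end} branching vertex; it may perfectly well have degree at least $3$. Indeed the minimizers constructed later in this very paper have this feature: in $T^*_{n,4}=F^5_2\circ(\frac{n-3}{5},\frac{n-23}{5},\frac{n-13}{5},\frac{n-3}{5},\frac{n-3}{5})$ the even-indexed vertices $v_2$ and $v_4$ lie strictly between the end branching vertices and carry many pendant leaves. That only the \emph{odd}-indexed interior vertices have degree $2$ is a nontrivial fact which the paper establishes separately (Proposition~\ref{pro-middle-branch-vertex}, via the splitting Lemma~\ref{split}), and it is not available at the stage where Lemma~\ref{pro-branching-vertices} is used. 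Finally, the parity claim ``$q$ is even'' is settled by an unspecified ``local swap''; since this is a statement about the minimizer that genuinely requires a spectral perturbation argument, the sketch does not amount to a proof.
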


Lemma \ref{pro-branching-vertices} can be extended as the following lemma.
\begin{lem}\label{pro-end-branching}
Let $T^*$ be a minimizer  graph in  $\mathbb{G}_{n,\alpha}$ for $ \alpha\ge \lceil\frac{n}{2}\rceil$, and  $P_{q+1}=v_0v_1\cdots v_q$ be a control path of $T^*$. Then\\
(i)  Every control vertex $u$ of $T^*$  attaches at least $d(u)-1$  leaves,\\
(ii) $q$ is even and $v_i\in S(T^*)$ for odd $i\in[1, q-1]$.
\end{lem}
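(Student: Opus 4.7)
The plan is to reduce to Lemma \ref{pro-branching-vertices} whenever possible, and to handle the boundary values of $\alpha$ by direct inspection of the known minimizer graphs. By Lemma \ref{thm-extremal-graph-tree} we already know that $T^*$ is a tree, so the range $\alpha\ge\lceil n/2\rceil$ splits into four subranges that I would treat separately: (a) $\alpha=\lceil n/2\rceil$, in which $T^*=P_n$; (b) $\alpha=\lceil n/2\rceil+1$, in which $T^*=W_n$ for odd $n$ and $T^*=D_n$ for even $n$; (c) $\lceil n/2\rceil+2\le\alpha\le n-2$, the range already covered by Lemma \ref{pro-branching-vertices}; and (d) $\alpha=n-1$, in which $T^*=K_{1,n-1}$. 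These minimizer identifications are the ones summarised in Tab.\ref{result}.

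For the three boundary subranges (a), (b), (d) I would verify (i) and (ii) by explicit bookkeeping. In each case the designated control vertices are prescribed by the definition preceding the statement (for $P_n$ and $D_n$) or follow from the general definition (for $W_n$ and $K_{1,n-1}$); I would read off the unique control path, compute its length (obtaining $q=n-3$ or $q=n-2$ for $P_n$, $q=n-5$ for $W_n$, $q=n-4$ for $D_n$, and $q=0$ for $K_{1,n-1}$), check that the relevant parity hypothesis makes $q$ even, and then track the alternating maximum independent set (constrained by $L(T^*)\subseteq S(T^*)$) to confirm that the odd-indexed vertices along the control path lie in $S(T^*)$. Part (i) in these cases is immediate from the fact that each control vertex has all its non-central neighbours pendant (the center of $K_{1,n-1}$ carries $n-1\ge n-2$ leaves, the two control vertices of $W_n$ carry two leaves each, and the asymmetric pair in $D_n$ carries two and one leaves respectively).

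For the generic subrange (c), the key structural observation is that every control vertex $u$ of $T^*$ is in fact an end branching vertex in the sense of \cite{Lou}. Indeed, $d(u)\ge 3$ together with the fact that at least $d(u)-1$ of the $d(u)$ components of $T^*-u$ are paths means at most one component can carry further branching vertices, so $u$ cannot lie strictly between two branching vertices. Once this identification is made, every control path becomes a path between two end branching vertices, so (i) follows from Lemma \ref{pro-branching-vertices}(ii) and our (ii) follows from Lemma \ref{pro-branching-vertices}(iii).

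The main obstacle is the verification in subrange (b): the parity hypotheses on $n$ must be matched against the explicit lengths of the central paths of $W_n$ and $D_n$, and the asymmetric leaf distribution of $D_n$ (two leaves at one control vertex, one at the other) requires slightly different bookkeeping than the symmetric $W_n$. The identification in subrange (c) is conceptually clean but should be phrased carefully so as to cover the degenerate possibility that all $d(u)$ components of $T^*-u$ are paths, in which case $u$ has no "big" component reaching other branching vertices at all; this guarantees we can legitimately invoke both (ii) and (iii) of Lemma \ref{pro-branching-vertices}.
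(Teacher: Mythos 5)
Your overall decomposition is the same as the paper's: the generic range $\lceil n/2\rceil+2\le\alpha\le n-2$ is handled by invoking Lemma \ref{pro-branching-vertices}, and the three boundary values $\alpha\in\{\lceil n/2\rceil,\lceil n/2\rceil+1,n-1\}$ are settled by inspecting the known minimizers $P_n$, $W_n$, $D_n$, $K_{1,n-1}$ from Tab.\ref{result}; your bookkeeping of $q$ and of the leaf counts in those four graphs is consistent with the designated control vertices and is essentially what the paper leaves as ``easy to verify.''

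The problem is your ``key structural observation'' in subrange (c). You claim that every control vertex $u$ of $T^*$ is an end branching vertex because ``at least $d(u)-1$ of the $d(u)$ components of $T^*-u$ are paths means at most one component can carry further branching vertices.'' That inference is false: a component $C$ of $T^*-u$ that is a path can still contain a branching vertex of $T^*$, namely the unique vertex $z\in C$ adjacent to $u$, since $d_{T^*}(z)=d_C(z)+1$ can equal $3$ while $z$ is an interior (degree-$2$) vertex of the path $C$. Concretely, if $u$ has neighbours $w_1,w_2,x$ where each $w_i$ carries two pendant leaves and $x$ is a leaf, then all three components of $T-u$ are paths ($P_3$, $P_3$, $P_1$), so $u$ is a control vertex, yet $u$ lies on the path $w_1uw_2$ between two branching vertices and attaches only one leaf. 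The paper itself warns of exactly this: ``an end branching vertex must be control vertex, but not vice versa.'' So the identification you rely on cannot be obtained from the definition of control vertex alone; it is a property of \emph{minimizer} graphs, and ruling out configurations like the one above requires the minimality of $T^*$ (for instance, two such path components force two end branching vertices $w_1,w_2$ adjacent to $u$, whence $u\in S(T^*)$ by Lemma \ref{pro-branching-vertices}(iii), and one must then derive a contradiction with the remaining neighbours of $u$ and $L(T^*)\subseteq S(T^*)$). As written, your deduction of (i) and (ii) in the generic range has a genuine gap at this step, even though the conclusion you are reducing to is the same one the paper asserts.
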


\begin{proof}
From Lemma \ref{pro-branching-vertices}, (i) and (ii) hold for $\lceil\frac{n}{2}\rceil+2\le \alpha\le n-2$. It remains to consider  $\alpha=n-1$, $\lceil\frac{n}{2}\rceil$ and $\lceil\frac{n}{2}\rceil+1$. According to the results listed Table \ref{result},  if $\alpha(T^*)\in \{n-1,\lceil\frac{n}{2}\rceil,\lceil\frac{n}{2}\rceil+1\}$ then $T^* $ would be one of  $K_{1,n-1}$, $P_n$, $D_n$ or $W_n$.  From Fig.\ref{fig-W-D} we see that  either one of their control vertices can only hang leaves and thus (i) holds.
Additionally, the  control path of $K_{1,n-1}$, $P_n$, $D_n$ and $W_n$ are all unique by definition.
It is easy to verify that   (ii) holds for $T^*\in \{K_{1,n-1}, P_n, D_n, W_n\}$.
\end{proof}

In what follows we always denote by $T^*$  a minimizer  graph in  $\mathbb{G}_{n,\alpha}$ for $ \alpha\ge \lceil\frac{n}{2}\rceil$ as assumed as in Lemma \ref{pro-end-branching}.

\begin{lem}\label{pro-path-1}
Let $v'v$ be an edge of a tree $T$, and $v$ attach with a  pendant path $P=vy_1y_2\cdots y_s$ for $s\ge 2$.
 For even $t\le s$, let $T'$ be a graph obtained from $T$ by replacing  $v'v$ with a path $v'x_1x_2\cdots x_tv$  and simultaneously deleting $t$ vertices $y_{s-t+1},y_{s-t+2},...,y_s$. If  one of $v$ and $v'$ is in $S(T)$, then $n(T')=n(T)$ and $\alpha(T')=\alpha(T)$.
\end{lem}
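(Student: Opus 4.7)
The vertex-count $n(T')=n(T)$ is immediate, since the construction inserts $t$ new vertices $x_1,\ldots,x_t$ along the edge $v'v$ and deletes $t$ terminal vertices $y_{s-t+1},\ldots,y_s$ of the pendant path. The non-trivial content is $\alpha(T')=\alpha(T)$, which I plan to prove by establishing both inequalities.

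For $\alpha(T')\ge\alpha(T)$, I plan to lift $S(T)$ to an independent set $S'$ of $T'$ with $|S'|=|S(T)|$ using the hypothesis. Since $y_s\in L(T)\subseteq S(T)$ and any independent set restricted to the pendant path must alternate outward from the leaf, exactly $t/2$ of the deleted vertices belong to $S(T)$ (using that $t$ is even). If $v\in S(T)$, then $v'\notin S(T)$ and I set
\[
S'=\bigl(S(T)\setminus\{y_{s-t+1},\ldots,y_s\}\bigr)\cup\{x_1,x_3,\ldots,x_{t-1}\};
\]
if instead $v'\in S(T)$ with $v\notin S(T)$, I set
\[
S'=\bigl(S(T)\setminus\{y_{s-t+1},\ldots,y_s\}\bigr)\cup\{x_2,x_4,\ldots,x_t\}.
\]
In either case the inserted $x$-vertices alternate along the subdivided path $v'x_1x_2\cdots x_tv$ and are pairwise non-adjacent; the two extremal $x$-vertices of the alternating block are adjacent only to the vertex of $\{v,v'\}$ that lies outside $S(T)$, so $S'$ is independent in $T'$ with $|S'|=|S(T)|-t/2+t/2=\alpha(T)$.

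For the reverse inequality $\alpha(T')\le\alpha(T)$, I plan to decompose the passage $T\to T'$ into $t/2$ identical elementary moves: (i)~subdivide the current first edge emanating from $v'$ by two new vertices, producing an intermediate tree $T^\sharp$; then (ii)~delete the two terminal vertices of the pendant path. A short case analysis on the statuses of $v,v'$ in a maximum independent set of $T^\sharp$ yields $\alpha(T^\sharp)=\alpha(T)+1$: the subdivision removes the $v$--$v'$ edge constraint but forces at most one of the two inserted vertices into any independent set, and these two effects combine to $+1$. For step (ii), any independent set of the shortened tree extends to $T^\sharp$ of size one larger by appending the current tip of the pendant path (now a leaf of $T^\sharp$ whose unique neighbor has been deleted), giving $\alpha(T')\le\alpha(T^\sharp)-1$; the reverse bound is immediate by removing this leaf from any maximum independent set of $T^\sharp$. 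Hence each elementary move preserves $\alpha$, and $t/2$ iterations yield $\alpha(T')=\alpha(T)$.

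The main obstacle is routine bookkeeping: one must verify that the pendant path has length at least two at the start of each elementary move (guaranteed by $t\le s$), and that the parities in the lifting construction align so that the added $x$-vertices are compatible with the statuses of $v$ and $v'$ in $S(T)$. This is precisely where the hypothesis is used, namely to split into the two clean cases $v\in S(T)$ and $v'\in S(T)$ without having to handle the degenerate configuration where both lie outside $S(T)$.
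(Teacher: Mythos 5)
Your proof is correct, and it reaches the same conclusion through a noticeably different organization of the same underlying bookkeeping. The paper works in two monolithic stages: first it forms $T_1$ by inserting all $t$ subdivision vertices at once and proves $\alpha(T_1)=\alpha(T)+\frac{t}{2}$ (the upper bound needs a small case analysis: if the part of a maximum independent set of $T_1$ lying in $V(T)$ exceeds $|S(T)|$, then both $v$ and $v'$ must belong to it, which costs one slot among the $x_i$'s); then it deletes all $t$ tail vertices at once and argues $\alpha(T_2)=\alpha(T_1)-\frac{t}{2}$ by a parity/alternation claim along the pendant path. You instead prove $\alpha(T')\ge\alpha(T)$ by a single direct lift of $S(T)$, and $\alpha(T')\le\alpha(T)$ by iterating $\frac{t}{2}$ elementary moves (double subdivision, which raises $\alpha$ by exactly $1$, followed by deletion of the two outermost pendant vertices, which lowers it by exactly $1$). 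The iterative version buys simplicity: each elementary step reduces to standard two-vertex facts whose verification is essentially trivial, and it sidesteps the paper's somewhat terse alternation argument for the deletion stage; the paper's version buys a slightly shorter write-up and the intermediate identity $\alpha(T_1)=\alpha(T)+\frac{t}{2}$, which it does not reuse elsewhere. One small imprecision in your lower-bound step: a maximum independent set need not literally \emph{alternate} along the tail of the pendant path (e.g.\ it may contain $y_s$ and $y_{s-3}$ but neither of $y_{s-1},y_{s-2}$); the correct statement is only that $|S(T)\cap\{y_{s-t+1},\ldots,y_s\}|=\frac{t}{2}$, which follows from the upper bound $\lceil t/2\rceil$ on a path of $t$ vertices together with the exchange that replaces $S(T)$'s trace on the tail by $\{y_s,y_{s-2},\ldots,y_{s-t+2}\}$ and invokes maximality. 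With that justification substituted, your argument is complete.
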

\begin{proof}
Let $T_1$ be the graph obtained from $T$ by replacing  $v'v$ with a path $v'x_1x_2\cdots x_tv$ and $T_2$  a graph obtained from $T_1$ by deleting vertices $y_{s-t+1},y_{s-t+2},...,y_s$. Then $T'=T_2$ and, obviously $n(T)=n(T')$.

We first verify $\alpha(T_1)=\alpha(T)+\frac{t}{2}$.
Without loss of generality, assume that $v\in S(T)$ and then $v'\not\in S(T)$.
Thus $S(T)\cup\{x_1,x_3,...,x_{t-1}\}$ is an independent set of $T_1$ and so
 $\alpha(T_1)\ge\alpha(T)+\frac{t}{2}$.
 For the maximum independent set $S(T_1)$, let $S_1=S(T_1)\cap \{x_1,x_2,...,x_t\}$ and $S_2=S(T_1)\cap V(T)$.
  Then $\alpha(T_1)=|S(T_1)|=|S_1|+|S_2|$.
  Since $x_1x_2\cdots x_t$ is an induced path in $T_1$, it can produce at most $\frac{t}{2}$ independent vertices and so $|S_1|\le \frac{t}{2}$.
  If $|S_2|\le |S(T)|$ then $\alpha(T_1)=|S(T_1)|\le \alpha(T)+\frac{t}{2}$. Otherwise $|S_2|> |S(T)|$, it implies  that $v', v\in S_2$ ( because  if there is at most one of  $v'$ and $ v$   belonging  to $S_2$ then $S_2$ will be an independent set of $T$ which contracts $|S_2|> |S(T)|$ ).
In this situation,  we have $|S_2|=\alpha(T)+1$ and $|S_1|\le \frac{t}{2}-1$, and so $\alpha(T_1)=|S_1|+|S_2|\le \alpha(T)+\frac{t}{2}$. It follows that $\alpha(T_1)=\alpha(T)+\frac{t}{2}$.

Next, by considering whether $y_{s-t}\in S(T_1)$ or not, we can verify $\alpha(T_2)=\alpha(T_1)-\frac{t}{2}$. In fact, $s-t$ has the same  parity with $s$, if  $y_{s-t}\in S(T_1)$ then $y_{s-t+2}, y_{s-t+4},...,y_s\in S(T_1)$; if $y_{s-t}\not\in S(T_1)$ then $y_{s-t+1}, y_{s-t+3},...,y_{s-1}\in S(T_1)$. It leads to our conclusion.
\end{proof}

\begin{pro}\label{pro-path}
Let $P_{q+1}=v_0v_1\cdots v_q$  be a control path of $T^*$. Then  $v_i$ does not attach  pendant paths of length more than one for $ 1\le i \le q-1$.
Moreover,  $v_i$ does not attach any leaf for odd $i$.
\end{pro}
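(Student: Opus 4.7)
The plan is to handle the two assertions separately: a short independent-set argument gives the ``moreover'' clause, while the main claim follows from Lemma~\ref{pro-path-1} and two applications of Lemma~\ref{lem-subdividing-internal-radius}.

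For the ``moreover'' clause, Lemma~\ref{pro-end-branching}(ii) places every odd-indexed $v_i$ into $S(T^*)$, and our convention ensures $L(T^*)\subseteq S(T^*)$. If $v_i$ attached a leaf $u$, then $u\in L(T^*)\subseteq S(T^*)$, so the adjacent pair $v_iu$ would lie inside the independent set $S(T^*)$, a contradiction. The exceptional trees $K_{1,n-1}$, $P_n$, $D_n$, $W_n$ all have degree-$2$ interior vertices along their control paths, so the assertion holds vacuously for them.

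For the main claim I argue by contradiction. Suppose some interior $v_i$ ($1\le i\le q-1$) carries a pendant path $v_iy_1\cdots y_s$ with $s\ge 2$. I apply Lemma~\ref{pro-path-1} with $v=v_i$, $v'=v_{i-1}$ and $t=2$ to obtain a tree $T'$ formed from $T^*$ by replacing $v_{i-1}v_i$ with the length-$3$ path $v_{i-1}x_1x_2v_i$ and simultaneously deleting the two pendant-path vertices $y_{s-1},y_s$. The hypothesis ``$v$ or $v'$ lies in $S(T^*)$'' is verified by parity: if $i$ is odd then $v_i\in S(T^*)$ by Lemma~\ref{pro-end-branching}(ii); if $i$ is even then $i-1$ is odd and at least $1$, so $v_{i-1}\in S(T^*)$ by the same lemma. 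Hence $n(T')=n(T^*)$ and $\alpha(T')=\alpha(T^*)$, placing $T'\in\mathbb{G}_{n,\alpha}$.

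To beat $\rho(T^*)$, I decompose $T^*\to T'$ into four steps: (i) subdivide $v_{i-1}v_i$, inserting $x_1$; (ii) subdivide the new edge $x_1v_i$, inserting $x_2$; (iii) delete the leaf $y_s$; (iv) delete the new leaf $y_{s-1}$. In steps (i) and (ii), the edge being subdivided sits on an internal path of the current tree: $v_i$ retains degree $\ge 3$ throughout (its pendant path persists), serving as one high-degree endpoint, while walking back from $v_{i-1}$ along the control path through a (possibly empty) run of degree-$2$ vertices reaches the control vertex $v_0$ (of degree $\ge 3$) as the other endpoint. The current tree is also never $W_m$, because $W_m$ has only degree-$2$ interior control-path vertices, whereas ours still carries a pendant path at the interior vertex $v_i$. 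Hence Lemma~\ref{lem-subdividing-internal-radius} yields strict decreases at (i) and (ii), and Lemma~\ref{lem-subgraph-radius} yields strict decreases at (iii) and (iv). Chaining gives $\rho(T')<\rho(T^*)$, contradicting the minimality of $T^*$. The most delicate check is the internal-path verification at step (ii), which I would confirm by exhibiting the path explicitly as $w_L\cdots v_{i-1}x_1v_i$ with $w_L$ the nearest degree-$\ge 3$ vertex on the $v_0$-side of $v_{i-1}$ along the control path, whose interior consists only of degree-$2$ vertices by the choice of $w_L$.
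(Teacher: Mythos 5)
Your proposal is correct and follows essentially the same route as the paper: both use Lemma \ref{pro-path-1} (with one of $v_{i-1},v_i$ in $S(T^*)$ guaranteed by Lemma \ref{pro-end-branching}(ii)) to keep the order and independence number fixed, Lemma \ref{lem-subdividing-internal-radius} on the internal path containing $v_{i-1}v_i$ to strictly lower the spectral radius, Lemma \ref{lem-subgraph-radius} for the leaf deletions, and the same independent-set parity argument for the ``moreover'' clause. Your version is merely a more careful rendering of the paper's single-shot subdivision (you fix $t=2$ and verify the internal-path and $G\neq W_m$ hypotheses at each of the two subdivision steps, which the paper glosses over).
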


\begin{proof}
Obviously, our  result holds if $T^*=P_n$. It is clear that $T^*\not=P_n$ has a control vertex of degree at least $3$.  Without loss of generality, we assume that $v_0$ is the control vertex with $d(v_0)\ge3$.
Suppose to the contrary that $v_i$ attaches a pendant path $P =v_iy_1\cdots y_s$ with $s\ge2$.  For an even $2\le t\le s$, let $T_1$ be a graph obtained from $T^*$ by replacing  $v_{i-1}v_i$ with a path $v_{i-1}x_1x_2\cdots x_tv_i$. Note that $d_{T^*}(v_0), d_{T^*}(v_i)\ge 3$, the edge $v_{i-1}v_i$ belongs to an internal path of $T^*$ that is some sub-path of $P_{q+1}$, thus we have  $\rho(T_1)<\rho(T^*)$ from
Lemma \ref{lem-subdividing-internal-radius}.
Let $T_2$ be  a graph obtained from $T_1$ by deleting vertices $y_{s-t+1},y_{s-t+2},...,y_s$. Notice that  there is exactly one of $v_{i-1}$ and $v_i$ belonging to $S(T^*)$ according to Lemma \ref{pro-end-branching}(ii), we have $n(T_2)=n(T^*)$ and $\alpha(T_2)=\alpha(T^*)$ by Lemma \ref{pro-path-1}. Hence $T_2\in \mathbb{G}_{n,\alpha}$.
Since $T_2$ is a subgraph of $T_1$, we have $\rho(T_2)<\rho(T_1)$ from Lemma \ref{lem-subgraph-radius},
 and thus $\rho(T_2)<\rho(T^*)$ which contracts the  minimality  of $T^*$.

Particularly, we have $v_i\in S(T^*)$ for odd $i$ by Lemma \ref{pro-end-branching}(ii). It implies that
 $v_i$ does not attach any leaf  for odd $i$.
\end{proof}

\begin{lem}\label{split}
Let $G$ be a  connected graph with the  Perron vector $\mathbf{x}=(x_u\mid u\in V(G))$ and $v\in V(G)$.
Suppose that $N_G(v)=\{w_1,w_2,...,w_t\}$ ($t\ge3$) and $x_{w_1}=\min_{w\in N_G(v)} \{x_w\}$.
Let $G'$ be a graph obtained from $G$ by replacing $v$ with  two new vertices $v'$,  $v''$ and meanwhile adding new edges  $v'w_1, v'w_2,...,v'w_s$  and  $\ v''w_1, v''w_{s+1},\ v''w_{s+2},...,v''w_t$ for some $s \in[2, t-1]$ (see Fig.\ref{graph transformation-v}).
If  $vw_1$ is  a cut edge of $G$ then $\rho(G')\le \rho(G)$,
with equality if and only if
$t=3$ and $x_{w_1}=x_{w_2}=x_{w_3}$.
\end{lem}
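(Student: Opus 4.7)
The plan is to produce a positive vector $\mathbf{y}$ on $V(G')$ satisfying $A(G')\mathbf{y}\le \rho(G)\mathbf{y}$ coordinate-wise and then invoke Lemma \ref{vector}. Let $\rho=\rho(G)$ and $\mathbf{x}$ be the Perron vector of $G$. Because $vw_1$ is a cut edge, $G-vw_1$ falls into two components; denote by $G_2$ the one containing $w_1$ and by $G_1$ the one containing $v$ (so $w_2,\ldots,w_t\in V(G_1)$). The key observation is that the cut-edge hypothesis decouples $G_2$ from the rest of the graph, so its Perron entries may be rescaled independently. I will set $y_u=x_u$ for $u\in V(G_1)\setminus\{v\}$, $y_u=cx_u$ for $u\in V(G_2)$ with some $c>1$, and $y_{v'}=\alpha x_v$, $y_{v''}=\beta x_v$ with $\alpha,\beta\in(0,1]$, where $\alpha,\beta,c$ are parameters to be chosen.

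Write $A=\sum_{i=2}^s x_{w_i}$ and $B=\sum_{i=s+1}^t x_{w_i}$, so the eigenequation at $v$ gives $\rho x_v=x_{w_1}+A+B$. A direct check vertex-by-vertex reduces $A(G')\mathbf{y}\le\rho\mathbf{y}$ to the system: (a) $\alpha\ge (cx_{w_1}+A)/(x_{w_1}+A+B)$ and $\beta\ge (cx_{w_1}+B)/(x_{w_1}+A+B)$ at $v'$ and $v''$; (b) $\alpha+\beta\le c$ at $w_1$; (c) $\alpha\le 1$ and $\beta\le 1$ at the $w_i$ with $i\ge 2$; and (d) automatic equality at all other vertices, since vertices in $V(G_1)\setminus\{v\}$ outside $\{w_2,\ldots,w_t\}$ see no change, and vertices in $V(G_2)\setminus\{w_1\}$ experience only the uniform rescaling $c$. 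Setting (a) to equality and combining with (b) yields $c\ge (A+B)/(A+B-x_{w_1})$, which is well-defined and strictly greater than $1$ since $t\ge 3$ together with $x_{w_1}\le x_{w_i}$ gives $A+B\ge(t-1)x_{w_1}\ge 2x_{w_1}$. With this minimal $c$ the constraints in (c) reduce to $x_{w_1}^2\le B(A+B-x_{w_1})$ and $x_{w_1}^2\le A(A+B-x_{w_1})$, both of which follow from $A,B\ge x_{w_1}$. Thus $\mathbf{y}>0$ and $A(G')\mathbf{y}\le\rho\mathbf{y}$, so Lemma \ref{vector} yields $\rho(G')\le\rho$.

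For equality, assume $\rho(G')=\rho$. The graph $G'$ is connected because both $v'$ and $v''$ are adjacent to $w_1$, so pairing $A(G')\mathbf{y}\le\rho\mathbf{y}$ with the strictly positive Perron vector of $G'$ forces $(A(G')\mathbf{y})_u=\rho y_u$ at every vertex. Coordinate equality at $w_i$ ($i\ge 2$) gives $\alpha=\beta=1$, then equality at $w_1$ gives $c=2$, and equality at $v',v''$ forces $A=B=x_{w_1}$. Since each $x_{w_i}\ge x_{w_1}>0$, $A=x_{w_1}$ requires $s-1=1$ with $x_{w_2}=x_{w_1}$, and $B=x_{w_1}$ requires $t-s=1$ with $x_{w_{s+1}}=x_{w_1}$; hence $s=2$, $t=3$, and $x_{w_1}=x_{w_2}=x_{w_3}$. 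The converse is immediate: with these values the vector $\mathbf{y}$ defined via $\alpha=\beta=1$, $c=2$ is a strictly positive eigenvector of $A(G')$ with eigenvalue $\rho$, so $\rho(G')=\rho$. The main hurdle in carrying this out is recognizing that the cut-edge hypothesis is exactly what licenses the uniform scaling of $\mathbf{x}$ on $G_2$; without it, extra cross-edges between $V(G_2)$ and $V(G_1)\setminus\{v\}$ would produce error terms at their endpoints that this argument cannot absorb.
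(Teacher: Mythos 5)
Your proposal is correct and follows essentially the same route as the paper: rescale the Perron vector of $G$ by a constant on the component of $G-vw_1$ containing $w_1$, assign values to $v',v''$, verify $A(G')\mathbf{y}\le\rho(G)\mathbf{y}$ coordinatewise, and invoke Lemma \ref{vector}, with the equality case read off from where the coordinate inequalities are strict. The only difference is that the paper fixes the constants outright ($y_{v'}=y_{v''}=x_v$ and the factor $c=2$ on the $w_1$-side) rather than optimizing over $\alpha,\beta,c$, which shortens the verification; your left-Perron-vector pairing in the equality direction is a slightly more explicit justification of the same conclusion.
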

\begin{figure}[h]
  \centering
  \footnotesize
\unitlength 1mm 
\linethickness{0.4pt}
\ifx\plotpoint\undefined\newsavebox{\plotpoint}\fi 
\begin{picture}(128,28)(0,0)
\put(23,26){\circle*{1.5}}
\put(25,26){$v$}
\put(19,17){\circle*{1.5}}
\put(19,17){\line(0,-1){5}}
\put(18.5,8){$\vdots$}
\put(20,15){$w_2$}
\put(25,0){\footnotesize$G$}
\put(38,17){\circle*{1.5}}
\put(38,17){\line(0,-1){5}}
\put(37.5,8){$\vdots$}
\put(39,15){$w_{s+1}$}
\put(41,12){$\ldots$}
\put(23,26){\line(5,-3){15}}
\put(28.75,17){\circle*{1.5}}
\put(28.75,17){\line(0,-1){5}}
\put(28.5,8){$\vdots$}
\put(29.75,15){$w_s$}
\put(23,26){\line(2,-3){6}}
\put(47,17){\circle*{1.5}}
\put(47,17){\line(0,-1){5}}
\put(46.5,8){$\vdots$}
\put(48,15){$w_t$}
\put(22,12){$\ldots$}
\put(9,17){\circle*{1.5}}
\put(9,17){\line(0,-1){5}}
\put(8.5,8){$\vdots$}
\put(10,15){$w_1$}
\put(9,12){\oval(6,14)[]}
\multiput(23,26)(-.0524344569,-.0337078652){267}{\line(-1,0){.0524344569}}
\multiput(23,26)(-.033613445,-.075630252){119}{\line(0,-1){.075630252}}
\multiput(23,26)(.0898876404,-.0337078652){267}{\line(1,0){.0898876404}}
\put(34,16.5){\oval(36,23)[]}
\put(0,10){\footnotesize$G_{w_1}$}
\put(53,10){\footnotesize$G_{v}$}
\put(99,26){\circle*{1.5}}
\put(100,25){$v'$}
\put(95,17){\circle*{1.5}}
\put(95,17){\line(0,-1){5}}
\put(94.5,8){$\vdots$}
\put(96,15){$w_2$}
\put(114,17){\circle*{1.5}}
\put(114,17){\line(0,-1){5}}
\put(113.5,8){$\vdots$}
\put(115,15){$w_{s+1}$}
\put(117,12){$\ldots$}
\put(104.75,17){\circle*{1.5}}
\put(104.75,17){\line(0,-1){5}}
\put(104.5,8){$\vdots$}
\put(105.75,15){$w_s$}
\put(99,26){\line(2,-3){6}}
\put(123,17){\circle*{1.5}}
\put(123,17){\line(0,-1){5}}
\put(122.5,8){$\vdots$}
\put(124,15){$w_t$}
\put(97,12){$\ldots$}
\put(85,17){\circle*{1.5}}
\put(85,17){\line(0,-1){5}}
\put(84.5,8){$\vdots$}
\put(86,15){$w_1$}
\put(85,12){\oval(6,14)[]}
\multiput(99,26)(-.0524344569,-.0337078652){267}{\line(-1,0){.0524344569}}
\multiput(99,26)(-.033613445,-.075630252){119}{\line(0,-1){.075630252}}
\put(116,26){\circle*{1.5}}
\put(117,25){$v''$}
\multiput(116,26)(-.03333333,-.15){60}{\line(0,-1){.15}}
\multiput(116,26)(.033653846,-.043269231){208}{\line(0,-1){.043269231}}
\multiput(116,26)(-.1161048689,-.0337078652){267}{\line(-1,0){.1161048689}}
\put(110,16.5){\oval(36,23)[]}
\put(76,9){\footnotesize$G'_{w_1}$}
\put(129,9){\footnotesize$G'_{v',v''}$}
\put(100,0){\footnotesize$G'$}
\end{picture}

\caption{\footnotesize{Graph $G$ and $G'$ in Lemma \ref{split} }}
  \label{graph transformation-v}
\end{figure}
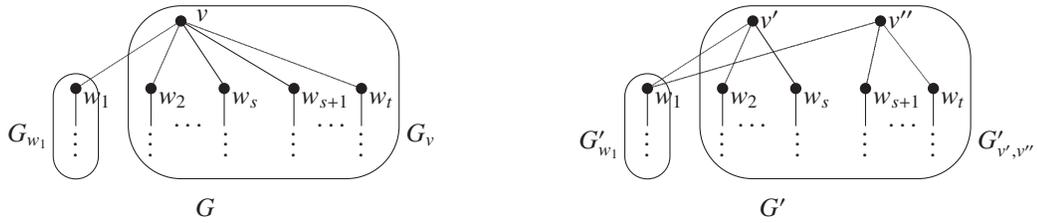

\begin{proof}
Since $vw_1$ is  a cut edge,  let $G_v$ and $G_{w_1}$ be two components of $G-vw_1$ containing $v$  and $w_1$, respectively.
Similarly, denote by $G'_{w_1}$  the component of $G'-v'w_1-v''w_1$ containing $w_1$, and  $G'_{v',v''}$ the remaining part containing $v'$ and $v''$.
It is clear that $G'_{v',v''}-\{v',v''\}=G_v-v$ and $G'_{w_1}=G_{w_1}$.
Now we define a vector $\mathbf{y}=(y_u\mid u\in V(G'))$, where
$$y_u=
\left\{\begin{array}{ll}
x_v &\mbox{if $u\in \{v',v''\}$,}\\
x_u &\mbox{if $u\in G'_{v',v''}-\{v',v''\}$,}\\
2x_u &\mbox{if $u\in G'_{w_1}$.}
\end{array}\right.$$
Clearly, $\mathbf{y}> \mathbf{0}$.
In what follows  we will consider the $u$-entry of $A(G')\mathbf{y}$. First of all, from our definition   $(A(G')\mathbf{y})_u=\rho(G)y_u$ if $u\not=v',v'',w_1$.
For $u=v'$, we have
\begin{equation}\label{eq-v'}
\begin{array}{ll}
(A(G')\mathbf{y})_{v'}=\sum_{i=1}^s y_{w_i}&=2x_{w_1}+\sum_{i=2}^s x_{w_i}\\
&=\sum_{i=1}^t x_{w_i}+(x_{w_1}-\sum_{i=s+1}^t x_{w_i})\\
&=\rho(G)x_v+(x_{w_1}-\sum_{i=s+1}^t x_{w_i})\\
&\le \rho(G)x_v\ \ \ \ ( \mbox{due to $x_{w_1}=\min_{w\in N_G(v)} \{x_w\}$} )\\
&=\rho(G)y_{v'}.
\end{array}\end{equation}
Similarly,  we have
\begin{equation}\label{eq-v''}
(A(G')\mathbf{y})_{v''}=y_{w_1}+\sum_{i=s+1}^t y_{w_i}=\rho(G)x_v+(x_{w_1}-\sum_{i=2}^s x_{w_i})\le \rho(G)x_v= \rho(G)y_{v''}.\end{equation}
Let $N_{G}(w_1)=\{v,u_1,...,u_p\}$ and then $N_{G'}(w_1)=\{v',v'',u_1,...,u_p\}$. We have
$$
(A(G')\mathbf{y})_{w_1}=y_{v'}+y_{v''}+\sum_{i=1}^p y_{u_i}=x_v+x_v+\sum_{i=1}^p 2x_{u_i}
=2\rho(G)x_{w_1}=\rho(G)\cdot(2 x_{w_1})
=\rho(G)y_{w_1}.$$
By above discussions, we have $A(G')\mathbf{y}\le \rho(G)\mathbf{y}$.
By Lemma \ref{vector}, we have $\rho(G')\le \rho(G)$.

The  equalities (\ref{eq-v'}) and (\ref{eq-v''}) hold (i.e., $A(G')\mathbf{y}=\rho(G)\mathbf{y}$) if and only if $t=3$ and $x_{w_1}=x_{w_2}=x_{w_3}$.
Note that $G'$ is connected and the vector $\mathbf{y}$ is positive,   $\mathbf{y}$ is an eigenvector of $A(G')$ corresponding to the spectral radius $\rho(G')$ if $A(G')\mathbf{y}=\rho(G)\mathbf{y}$.
Thus
$\rho(G')= \rho(G)$ if and only if $t=3$ and $x_{w_1}=x_{w_2}=x_{w_3}$.

It completes the proof.
\end{proof}

\begin{pro}\label{pro-middle-branch-vertex}
Let $P_{q+1}=v_0v_1\cdots v_q$  be a control path of $T^*$.
Then $d_{T^*}(v_i)=2$  for odd $i\in[1, q-1]$.
\end{pro}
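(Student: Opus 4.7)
The plan is to argue by contradiction: suppose some odd $i\in[1,q-1]$ has $d_{T^*}(v_i)\ge 3$ and manufacture a graph in $\mathbb{G}_{n,\alpha}$ with strictly smaller spectral radius than $T^*$. First, by Lemma \ref{pro-end-branching}(ii), $v_i\in S(T^*)$, so all the neighbors $w_1,\ldots,w_t$ (with $t=d_{T^*}(v_i)\ge 3$) of $v_i$ lie outside $S(T^*)$. Second, because the conclusion is vacuous when $T^*=P_n$, I may assume $T^*\neq P_n$, and then by Lemma \ref{pro-end-branching}(i) at least one endpoint of the control path, which I relabel as $v_0$, is a control vertex of degree at least $3$ and hence attaches at least two leaves.

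The main move will be to apply the splitting Lemma \ref{split} at $v=v_i$. Since $T^*$ is a tree, every edge incident to $v_i$ is a cut edge, so the hypothesis is automatic; choosing $w_1$ to be the neighbor of $v_i$ with smallest Perron entry and any $s\in[2,t-1]$, I obtain a tree $T'$ on $n+1$ vertices with $\rho(T')\le\rho(T^*)$. The eventual strict inequality will come from the later leaf deletion, so the equality case of Lemma \ref{split} causes no trouble.

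Next I plan to pin down $\alpha(T')=\alpha+1$ exactly. The lower bound is immediate from $(S(T^*)\setminus\{v_i\})\cup\{v',v''\}$, which is independent in $T'$ because $v'$ and $v''$ are non-adjacent and their common neighborhood $\{w_1,\ldots,w_t\}$ is disjoint from $S(T^*)$. The matching upper bound comes from a short case analysis on which of $\{v',v''\}$ lies in a given maximum independent set $S'$ of $T'$: in each case $S'$ can be collapsed to an independent set of $T^*$ of cardinality at least $|S'|-1$ (by replacing $\{v',v''\}$ with $\{v_i\}$ when both are present, or by simply dropping one otherwise), forcing $\alpha(T')\le\alpha+1$.

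To finish, let $l$ be any leaf of $v_0$. Since $v_0$ still has at least two leaves in $T'$, a standard swap argument shows every maximum independent set of $T'$ excludes $v_0$ and contains all of its leaves, so $l$ lies in every maximum independent set of $T'$, giving $\alpha(T'-l)=\alpha(T')-1=\alpha$. Then $T'-l\in\mathbb{G}_{n,\alpha}$, and Lemma \ref{lem-subgraph-radius} yields $\rho(T'-l)<\rho(T')\le\rho(T^*)$, contradicting the minimality of $T^*$. The step I expect to require the most care is pinning $\alpha(T')$ down to \emph{exactly} $\alpha+1$: Lemma \ref{split} only controls the spectral radius, and without the precise independence number the size increase from splitting and the size decrease from leaf removal may fail to land back in $\mathbb{G}_{n,\alpha}$.
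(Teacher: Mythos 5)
Your proposal is correct and follows essentially the same route as the paper: split $v_i$ into $v'$, $v''$ via Lemma \ref{split} (using that $v_i\in S(T^*)$ forces all its neighbors outside $S(T^*)$, so $\alpha$ rises by exactly one), then delete a leaf and invoke Lemma \ref{lem-subgraph-radius} for the strict decrease. You are in fact somewhat more careful than the paper at the two points it glosses over — the upper bound $\alpha(T')\le\alpha+1$ and the choice of a leaf (at a control vertex carrying at least two leaves) whose removal is guaranteed to drop the independence number by one.
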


\begin{proof}
Suppose to the contrary that $d_{T^*}(v_i)\ge3$  for some odd $i\in[1, q-1]$.
Then we can label  $N_{T^*}(v_i)=\{ w_{1},\ldots, w_{s} ,v_{i-1},v_{i+1}\}$ with  $s\ge1$.
Let $\mathbf{x}$ be the  Perron vector of $T^*$.
Without loss of generality, we may assume that $x_{w_1}=\min_{w\in N_{T^*}(v_i)} \{x_w\}$.
Let $T_2$ be a graph obtained from $T^*$ by replacing $v_i$ with  two new vertices $v'_i$, $v''_i$ and meanwhile adding  edges $v'_iw_1,v'_iw_2,...,v'_iw_s$ and $ v''_iw_1,v_i''v_{i-1}, v''_iv_{i+1}$.
 Clearly, $n(T_2)=n(T^*)+1$.
 By  Lemma \ref{pro-end-branching}(ii), we have $v_i\in S(T^*)$ for odd $i\in[1, q-1]$.
Thus $v_{i-1},v_{i+1}\notin S(T^*)$ and $w_t\notin S(T^*)$ for $t\in [1,s]$.
It follows that
  $S(T_2)=(S(T^*)\setminus \{v_i\})\cup \{v'_i,v''_i\}$ and so $\alpha(T_2)=\alpha(T^*)+1$.
Hence $T_2\in \mathbb{G}_{n+1,\alpha+1}$.
Since $v_{i}w_1$ is a cut edge of $T^*$, we have $\rho(T_2)\le \rho(T^*)$
by Lemma \ref{split}.
Let $T_3$ be a graph obtained from $T_2$ by deleting a leaf. Thus $T_3\in \mathbb{G}_{n,\alpha}$. By Lemma \ref{lem-subgraph-radius}, we have  $\rho(T_3)< \rho(T_2)\le\rho(T^*)$, a contradiction.

It completes the proof.
\end{proof}

\begin{pro}\label{cor-diam}
$diam(T^*)$ is even expect of $T^*=P_n$. Moreover, we have $diam(T^*)=2,4$ and $n-1$ for $\alpha=n-1, n-2$ and $\lceil\frac{n}{2}\rceil$, respectively, and  $6\le diam(T^*)\le 2(n-\alpha)$ for $\alpha\in [\lceil\frac{n}{2}\rceil+1, n-3]$.
\end{pro}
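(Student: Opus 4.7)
The plan is to handle the four sub-claims in sequence: the parity of $diam(T^*)$ when $T^*\neq P_n$, the three exact diameters at the extremal values of $\alpha$, the upper bound in terms of $n-\alpha$, and the lower bound of $6$.

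For parity, I would fix a diameter path $u_0u_1\cdots u_d$ of $T^*$ with $u_0,u_d$ leaves, let $u_a,u_b$ ($a\le b$) be the first and last control vertices along it, and use Lemma \ref{pro-end-branching}(i) (which forces every control vertex to have at most one non-leaf neighbor) to deduce that in the case $a<b$ one has $a=1$ and $b=d-1$, reducing the parity of $d$ to that of the control-path length $b-a$, which is even by Lemma \ref{pro-end-branching}(ii). In the case $a=b$, either both neighbors of $u_a$ on the diameter path are leaves (so $T^*=K_{1,n-1}$, $d=2$), or $T^*$ is a one-sided broom in which $u_a=C$ attaches a pendant path of length $b$ with $d=b+1$. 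The sub-case $b$ even then needs to be ruled out, which I would do via Lemma \ref{split}: taking $w_1$ to be a leaf of $C$ realizing $\min_{w\in N_{T^*}(C)}x_w$ (so $Cw_1$ is a cut edge), the split produces a one-sided broom on $n+1$ vertices with one fewer leaf and pendant length $b+2$, and deleting the new terminal leaf yields a tree in $\mathbb{G}_{n,\alpha}$ (one checks $\alpha$ is preserved for $b$ even) with strictly smaller spectral radius by Lemmas \ref{split} and \ref{lem-subgraph-radius}, contradicting the minimality of $T^*$.

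The three exact diameters for $\alpha\in\{n-1,n-2,\lceil n/2\rceil\}$ are immediate from Table \ref{result}: $T^*$ is $K_{1,n-1}$, $T(\lceil\frac{n-3}{2}\rceil,\lfloor\frac{n-3}{2}\rfloor)$, or $P_n$ respectively, with diameters $2,4$ and $n-1$. The upper bound $diam(T^*)\le 2(n-\alpha)$ is the content of Lemma \ref{lem-diam}.

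For the lower bound $diam(T^*)\ge 6$ when $\alpha\in[\lceil n/2\rceil+1,n-3]$, note that $\alpha\neq\lceil n/2\rceil$ gives $T^*\neq P_n$, so $diam(T^*)$ is even by the parity step; it remains to exclude $diam(T^*)\in\{2,4\}$. The case $diam(T^*)=2$ forces $T^*=K_{1,n-1}$ and $\alpha=n-1$, outside the range. The case $diam(T^*)=4$, by the same parity case-split, forces $T^*$ to be either the double broom $T(a_1,a_2)$ (control path of length $q=2$, with Proposition \ref{pro-middle-branch-vertex} making the middle vertex have degree $2$ and Lemma \ref{pro-end-branching}(i) making both endpoints attach only leaves) or the one-sided broom with pendant length $3$; in either case one computes $\alpha(T^*)=n-2$, again outside the range. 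The main obstacle is the parity step, specifically ruling out the one-sided broom with even pendant length via the spectral-reduction provided by Lemma \ref{split}; the rest is bookkeeping against the preceding structural propositions.
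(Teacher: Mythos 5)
Your skeleton coincides with the paper's: the three exact diameters are read off Table \ref{result}, the upper bound is exactly Lemma \ref{lem-diam}, parity comes from locating a control path inside a diameter path and invoking Lemma \ref{pro-end-branching}(ii), and diameter $4$ is excluded structurally. You diverge in two places. For parity, the paper simply asserts that the diameter path $u_0u_1\cdots u_D$ contains the control path $u_1\cdots u_{D-1}$ and concludes that $D-2$, hence $D$, is even; your case $a<b$ reproduces this (and your derivation of $a=1$, $b=d-1$ from Lemma \ref{pro-end-branching}(i) is a correct justification of what the paper leaves implicit), but your case $a=b$ introduces a one-sided broom that you eliminate with Lemma \ref{split} followed by a leaf deletion. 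That spectral argument does check out (the minimum Perron entry over $N(C)$ is attained at a leaf since $\rho x_{p_1}=x_C+x_{p_2}>x_C=\rho x_{\mathrm{leaf}}$, and the bookkeeping on $n$ and $\alpha$ for even pendant length is right), but it is doing work the earlier results already do: in the only range where a general argument is needed, $\alpha\in[\lceil\frac{n}{2}\rceil+2,\,n-3]$, Lemma \ref{pro-branching-vertices}(i) guarantees at least two end branching vertices, so a tree with a single branching vertex (your one-sided broom, and the star) is not a candidate minimizer at all; the remaining values $\alpha\in\{\lceil\frac{n}{2}\rceil,\lceil\frac{n}{2}\rceil+1,n-2,n-1\}$ are settled by Table \ref{result}. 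For the exclusion of $D=4$, the paper counts: $n-\alpha\ge3$ forces a third vertex outside $S(T^*)$, and since every vertex lies within distance $2$ of $u_2$ and the distance-$2$ vertices are leaves, that vertex is a neighbor of $u_2$, so $d(u_2)\ge3$, contradicting Proposition \ref{pro-middle-branch-vertex}. You instead enumerate the diameter-$4$ shapes compatible with Proposition \ref{pro-middle-branch-vertex} and Lemma \ref{pro-end-branching}(i) and compute $\alpha=n-2$ for each. Both routes are valid; the paper's count avoids the enumeration entirely.

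The one genuine soft spot is your reduction of the case $a=b$ to ``star or one-sided broom.'' Having a single control vertex on the diameter path does not by itself exclude branching vertices elsewhere in the tree (off the path, or on it but failing the control-vertex definition), so the assertion that $C$ carries one clean pendant path and nothing else requires an argument you have not supplied. The cheapest repair is the observation above: by Lemma \ref{pro-branching-vertices}(i) a minimizer in the range $\alpha\ge\lceil\frac{n}{2}\rceil+2$ has at least two end branching vertices, each of which is a control vertex attaching only leaves, so with $T^*\ne P_n$ the case $a=b$ with $d>2$ simply never arises and the entire Lemma \ref{split} digression can be deleted.
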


\begin{proof}
First of all, from Tab.\ref{result}, we know that $T^*=K_{1,n-1}$ if $\alpha=n-1$, and so $diam(T^*)=2$. If $\alpha=n-2$ then $T^*=T(\lceil\frac{n-3}{2}\rceil,\lfloor\frac{n-3}{2}\rfloor)$, and so $diam(T^*)=4$ (see Fig.\ref{fig-W-D}). If $\alpha=\lceil\frac{n}{2}\rceil$ then $T^*=P_n$, and so $diam(T^*)=n-1$. Next we suppose that $\alpha\in [\lceil\frac{n}{2}\rceil+1, n-3]$.
Clearly, there is  a diameter path $P=u_0u_1\cdots u_{D-1}u_D$ that connects the two leaves $u_0$ and $u_{D}$ of $T^*$, which, we know,  includes  a control path $P'=u_1\cdots u_{D-1}$, where $D=diam(T^*)$.
 By Lemma  \ref{pro-end-branching}(ii), $D-2$ is even   and so $D$ is even.

First we have $diam(T^*)\le 2(n-\alpha)$ by Lemma \ref{lem-diam}.
On the other hand, note that the diameter path $P$ contains two control vertices, we have $diam(T^*)\ge4$.
It suffices to verify $diam(T^*)\not=4$.
Suppose to the contrary,
 the diameter path becomes  $P=u_0u_1u_2u_3u_4$.
 Thus  $u_1$ and $u_3$ are two control vertices. It implies that $u_0$, $u_4\in S(T^*)$, moreover $u_2 \in S(T^*)$ by Lemma \ref{pro-end-branching}(ii) and $u_1,u_3\notin S(T^*)$.
  Recall that $\alpha\le n-3$, we have $|V(T^*)\setminus S(T^*)|=n-\alpha\ge 3$. Thus, apart from $u_1$ and $u_3$  there exists at least one neighbor of $u_2$, say  $w\notin S(T^*)$. Thus  $d_{T^*}(u_2)\ge3$,  it   contradicts  Proposition \ref{pro-middle-branch-vertex}.
\end{proof}

For each $x\in V(T^*)\setminus L(T^*)$, according to Lemma \ref{pro-end-branching}(i) and Proposition \ref{pro-path}, there exists a control path $P= v_0v_1\cdots v_q$ (not necessarily unique)  such that $x$ is a vertex of $P$, i.e., $x=v_i$ for some $0\le i\le q$.
According to Lemma \ref{pro-end-branching}(ii)  and  Proposition \ref{pro-middle-branch-vertex} we have known that
$$
\left\{\begin{array}{ll}
\mbox{ $q$  is  even}&\\
\mbox{ $v_i\notin S(T^*)$}& \mbox {if   $i$ is even}\\
\mbox{ $v_i\in S(T^*)$ and $d_{T^*}(v_i)=2$ } & \mbox {if   $i$ is odd}\\
\end{array}\right.
$$
It is clear that the labelling of $x=v_i$ depends on the choice of $P$, however the  parity of its subscript $i$ is  independent with $P$.  So we can define such a vertex $x=v_i$  \emph{odd } if $i$ is odd, and  \emph{even } otherwise. Thus $V(T^*)\setminus L(T^*)$ is partitioned as even vertices, denoted by $V_2^*$, and odd vertices, denoted by $V_1^*$.
 It is clear that $S(T^*)=L(T^*)\cup V_1^*$ and so $V_1^*$ is independent. Moreover, any two  vertices $x_1$ and $x_2$ in $V_2^*$ cannot be adjacent since otherwise edge $x_1x_2$ must belong to some control path, thus $V_2^*$ is also independent. It is clear that $V_1^*\cup V_2^*$ induces a subtree of $T^*$, i.e., $T^*[V_1^*\cup V_2^*]=T^*\setminus L(T^*)$,  which is called  the \emph{main tree} of the minimizer graph $T^*$ and denoted by $T^*_-$.
 Notice that the leaves of $T^*$ can only hang at some vertices of $V_2^*$ according to Proposition \ref{pro-path},  the leaves of $T^*_-$ must be the control vertices of $T^*$, and the degree of vertex in $V_1^*$  is unchanged in $T^*_-$.  We immediately have the following proposition.
\begin{pro}\label{eq-degree1}
$T^*_{-}=T^*\setminus L(T^*)$ is  a tree  with bipartite partition $(V_1^* ,V_2^*)$, and
$$d_{T^*_{-}}(v)=\left\{\begin{array}{ll}
d_{T^*}(v)=2&  \mbox{ if $v\in V^*_1$},\\
d_{T^*}(v)-l(v)& \mbox{ if $v\in V^*_2$},
\end{array}\right.$$
 where $l(v)$ is the number of the leaves in $T^*$ attached at $v$.
\end{pro}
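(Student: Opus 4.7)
The plan is to verify the three assertions of the proposition in turn, each of which follows quickly from the structural results already assembled; this is essentially the reason the authors preface the statement with ``We immediately have.''

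First, that $T^*_{-}$ is a tree: since $T^*$ is a tree by Lemma \ref{thm-extremal-graph-tree}, acyclicity of the induced subgraph on $V(T^*)\setminus L(T^*)$ is automatic, and connectedness is preserved upon deleting leaves because for any two non-leaves the unique path between them in $T^*$ already avoids the leaf set. Thus $T^*_{-}$ is a tree on the vertex set $V_1^*\cup V_2^*$.

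Second, that $(V_1^*,V_2^*)$ is a bipartite partition: the paragraph preceding the proposition already records that the parity of any $v\in V(T^*)\setminus L(T^*)$ is independent of the control path witnessing $v$, so $V_1^*$ and $V_2^*$ partition $V(T^*_{-})$. Independence of $V_1^*$ follows from $V_1^*\subseteq S(T^*)$, and independence of $V_2^*$ follows because any edge between two even vertices would sit on some control path, contradicting the alternation of parities along such a path.

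Third, for the degree identity. For $v\in V_1^*$, $v$ occupies an odd position of some control path $v_0v_1\cdots v_q$, and since $q$ is even we have $1\le i\le q-1$; Proposition \ref{pro-middle-branch-vertex} then gives $d_{T^*}(v)=2$, and by Proposition \ref{pro-path} no leaf is attached at $v$, so both neighbors of $v$ survive in $T^*_{-}$ and $d_{T^*_{-}}(v)=2$. For $v\in V_2^*$, the $l(v)$ leaves attached at $v$ are exactly the neighbors of $v$ that are removed when passing from $T^*$ to $T^*_{-}$, and every other neighbor (which necessarily lies in $V_1^*$) is retained, so $d_{T^*_{-}}(v)=d_{T^*}(v)-l(v)$.

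There is no real obstacle here: the heavy lifting was done in Lemma \ref{pro-end-branching} and Propositions \ref{pro-path} and \ref{pro-middle-branch-vertex}. The only point meriting care is that the parity label on a non-leaf vertex is well-defined across the various control paths through it; once that is granted, the rest is a short bookkeeping argument.
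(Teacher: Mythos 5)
Your proposal is correct and follows essentially the same route as the paper, which derives this proposition directly from the discussion preceding it (well-definedness of the parity labelling via Lemma \ref{pro-end-branching} and Propositions \ref{pro-path} and \ref{pro-middle-branch-vertex}, independence of $V_1^*$ and $V_2^*$, and the observation that leaves of $T^*$ hang only at even vertices). The paper gives no separate formal proof beyond that paragraph, and your write-up fills in the same bookkeeping accurately.
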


\begin{pro}\label{lem-main-tree}
 $|V^*_2|=n-\alpha$, $|V^*_1|=n-\alpha-1$, $n(T^{*}_{-})=2(n-\alpha)-1$ and $|L(T^*)|=2\alpha-n+1$.
\end{pro}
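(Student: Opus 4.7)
The plan is to derive all four equalities from two counting arguments: one using $|S(T^*)|=\alpha$, and one exploiting that every vertex of $V_1^*$ has degree exactly $2$ in the main tree $T_-^*$.

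First I would record the set-theoretic decomposition $V(T^*)=L(T^*)\sqcup V_1^*\sqcup V_2^*$, which gives
\[
|L(T^*)|+|V_1^*|+|V_2^*|=n.
\]
Since $S(T^*)$ is a maximum independent set with $L(T^*)\subseteq S(T^*)$, and since the definition of odd/even vertices from the preceding discussion gives $S(T^*)=L(T^*)\cup V_1^*$ as a disjoint union, we get $|L(T^*)|+|V_1^*|=\alpha$. Subtracting from the previous identity immediately yields $|V_2^*|=n-\alpha$, which is the first claim.

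The main step is to obtain $|V_1^*|=|V_2^*|-1$. For this I would use Proposition \ref{eq-degree1}, which says that $T_-^*$ is a tree with bipartition $(V_1^*,V_2^*)$ and $d_{T_-^*}(v)=2$ for every $v\in V_1^*$. I would then count the edges of $T_-^*$ in two ways: on one hand, $|E(T_-^*)|=|V_1^*|+|V_2^*|-1$ because $T_-^*$ is a tree; on the other hand, since $T_-^*$ is bipartite with parts $(V_1^*,V_2^*)$, every edge has exactly one endpoint in $V_1^*$, so
\[
|E(T_-^*)|=\sum_{v\in V_1^*} d_{T_-^*}(v)=2|V_1^*|.
\]
Equating the two expressions yields $|V_1^*|=|V_2^*|-1=n-\alpha-1$, the second claim.

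The remaining two equalities are now immediate: $n(T_-^*)=|V_1^*|+|V_2^*|=(n-\alpha-1)+(n-\alpha)=2(n-\alpha)-1$, and $|L(T^*)|=n-n(T_-^*)=n-(2(n-\alpha)-1)=2\alpha-n+1$. The only step that carries any content is the bipartite/tree double count, and its validity rests entirely on Proposition \ref{pro-middle-branch-vertex} (all odd non-leaf vertices have degree $2$) together with Proposition \ref{eq-degree1}; the rest is arithmetic. I do not foresee an obstacle, but one should briefly note that the degenerate cases $T^*=P_n$, $K_{1,n-1}$ and $D_n$ covered by Lemma \ref{pro-end-branching} also fit this count (e.g.\ for $K_{1,n-1}$, $V_2^*$ is a single vertex, $V_1^*=\emptyset$, and $2\alpha-n+1=n-1$ leaves, consistent with $\alpha=n-1$).
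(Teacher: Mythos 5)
Your proposal is correct and follows essentially the same route as the paper: both derive $|V_2^*|=n-\alpha$ from the partitions $V(T^*)=L(T^*)\cup V_1^*\cup V_2^*$ and $S(T^*)=L(T^*)\cup V_1^*$, and both obtain $|V_1^*|$ by double-counting the edges of the bipartite tree $T^*_-$ using $d_{T^*_-}(v)=2$ for $v\in V_1^*$ (Proposition \ref{eq-degree1}) against $m(T^*_-)=n(T^*_-)-1$. The only cosmetic difference is that the paper computes $|L(T^*)|$ as $|S(T^*)|-|V_1^*|$ rather than $n-n(T^*_-)$, which is the same arithmetic.
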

\begin{proof}
According to above discussions, we have $V(T^*)=L(T^*)\cup V^*_1\cup V^*_2$ and  $S(T^*)=L(T^*)\cup V^*_1$ are two partitions,  which gives $|V^*_2|=|V(T^*)|-|S(T^*)|=n-\alpha$.
Since $T^{*}_{-}=( V^*_1, V^*_2)$ is bipartite, we have $m(T^{*}_{-})=\sum_{v\in V^*_1} d_{T^*_{-}}(v) =2|V^*_1|$ by Proposition \ref{eq-degree1}. Note that $T^{*}_{-}$ is a tree, we have
\begin{equation}\label{eq-n-1}
n(T^{*}_{-})=m(T^{*}_{-})+1=2|V^*_1|+1.\end{equation}
On the other hand, it is clear that
\begin{equation}\label{eq-n-2}
n(T^{*}_{-})=|V^*_2|+|V^*_1|=n-\alpha+|V^*_1|.\end{equation}
Combining (\ref{eq-n-1}) and (\ref{eq-n-2}), we obtain
$|V^*_1|=n-\alpha-1$ and $n(T^{*}_{-})=2(n-\alpha)-1$. Thus $|L(T^*)|=|S(T^*)|- |V^*_1|=\alpha-(n-\alpha-1)=2\alpha-n+1$.
\end{proof}

For $\alpha=\lceil\frac{n}{2}\rceil$, we have $T^*=P_{n}$. Note that $T^*_{-}=T^*\setminus L(T^*)$. By the choice of $L(P_{n})$,  $T^*_-=P_{n-2}$ for odd $n$ and $T^*_-=P_{n-1}$ for even $n$. Thus $diam(T^{*}_{-})=2(n-\lceil\frac{n}{2}\rceil)-2$ is even. Moreover,
from Lemma  \ref{pro-end-branching}(i) and  Proposition \ref{cor-diam},
we have the following result.
\begin{pro}\label{main-tree-diam}
$diam(T^{*}_{-})$ is even, moreover $diam(T^*_{-})=0$ and $2$ for $\alpha=n-1$ and $n-2$, respectively, and $4 \le diam(T^*_{-}) \le2(n-\alpha)-2$ for $\alpha\in [\lceil\frac{n}{2}\rceil, n-3]$.
\end{pro}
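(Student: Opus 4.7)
The plan is to establish $diam(T^*_{-}) = diam(T^*) - 2$ in the generic range and read off the boundary cases from the known shape of $T^*$. For $\alpha = n - 1$, Tab.\ref{result} gives $T^* = K_{1,n-1}$, whose only non-leaf is its centre, so $T^*_{-}$ is a single vertex and $diam(T^*_{-}) = 0$. For $\alpha = n - 2$, $T^* = T(\lceil (n-3)/2 \rceil, \lfloor (n-3)/2 \rfloor)$ (Fig.\ref{fig-W-D}), whose non-leaf vertices form an induced $P_3$, so $diam(T^*_{-}) = 2$. The case $\alpha = \lceil n/2 \rceil$ is precisely the content of the paragraph preceding the statement.

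For $\alpha \in [\lceil n/2 \rceil + 1,\, n - 3]$, I would prove $diam(T^*_{-}) = diam(T^*) - 2$ in two steps. Lower bound: a diameter path $u_0 u_1 \cdots u_D$ of the tree $T^*$ has $u_0, u_D \in L(T^*)$, hence $u_1 u_2 \cdots u_{D-1}$ is a path of length $D - 2$ lying in $T^*_{-}$, yielding $diam(T^*_{-}) \ge diam(T^*) - 2$. Upper bound: let $Q = w_0 w_1 \cdots w_{D'}$ be a diameter path of $T^*_{-}$. Its endpoints are leaves of $T^*_{-}$, i.e.\ vertices of $V(T^*) \setminus L(T^*)$ with exactly one neighbour in $T^*_{-}$. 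Combining Propositions \ref{pro-path} and \ref{pro-middle-branch-vertex}, the only vertices with that property are the control vertices of $T^*$: any interior $v_i$ of a control path retains both of $v_{i-1}, v_{i+1}$ as non-leaf neighbours in $T^*$, so it cannot be a leaf of $T^*_{-}$. By Lemma \ref{pro-end-branching}(i) each of $w_0, w_{D'}$ therefore carries at least $d_{T^*}(\cdot) - 1 \ge 2$ leaves of $T^*$. Prepending one such leaf at $w_0$ and appending one at $w_{D'}$ turns $Q$ into a path of length $D' + 2$ in $T^*$, so $diam(T^*) \ge diam(T^*_{-}) + 2$.

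Combining the two inequalities gives the identity $diam(T^*_{-}) = diam(T^*) - 2$, and Proposition \ref{cor-diam}, which supplies $diam(T^*)$ even with $6 \le diam(T^*) \le 2(n - \alpha)$, then yields both the parity of $diam(T^*_{-})$ and the range $4 \le diam(T^*_{-}) \le 2(n - \alpha) - 2$. The main obstacle is the structural identification of leaves of $T^*_{-}$ with the control vertices of $T^*$, which is precisely what forces the argument to invoke all three of the earlier structural results (Lemma \ref{pro-end-branching}(i), Propositions \ref{pro-path} and \ref{pro-middle-branch-vertex}); once that identification is in hand, the diameter identity itself is a short extend/truncate argument.
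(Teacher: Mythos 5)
Your proposal is correct and follows essentially the same route as the paper: the paper's own justification is just the explicit computation of $T^*_-$ for $T^*=P_n$ plus a one-line appeal to Lemma \ref{pro-end-branching}(i) and Proposition \ref{cor-diam}, and the leaf-stripping identity $diam(T^*_-)=diam(T^*)-2$ that you spell out (endpoints of a diameter path of $T^*$ are leaves; endpoints of a diameter path of $T^*_-$ are control vertices, which carry leaves of $T^*$) is exactly the intended argument. You also correctly exclude $\alpha=\lceil\frac{n}{2}\rceil$ from the generic identity, where the special choice of $L(P_{2t})$ makes it fail for even $n$.
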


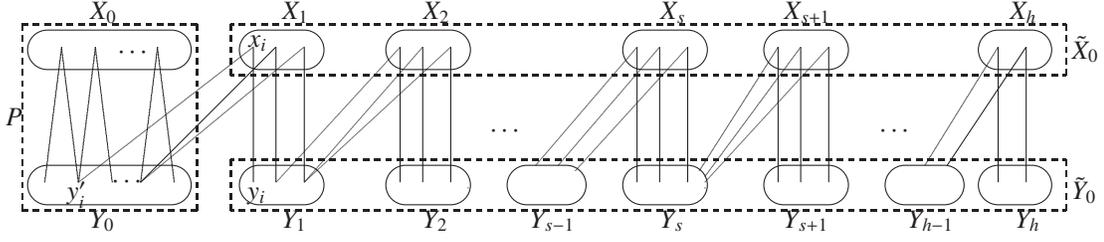
\begin{figure}[h]
  \centering
  \footnotesize
\unitlength 0.75mm 

\linethickness{0.4pt}
\ifx\plotpoint\undefined\newsavebox{\plotpoint}\fi 
\begin{picture}(188,37)(0,0)
\multiput(10,32)(-.03370787,-.26966292){89}{\line(0,-1){.26966292}}
\multiput(10,32)(.03370787,-.26966292){89}{\line(0,-1){.26966292}}
\multiput(16,32)(-.03370787,-.26966292){89}{\line(0,-1){.26966292}}
\multiput(16,32)(.03370787,-.26966292){89}{\line(0,-1){.26966292}}
\multiput(27,32)(-.03370787,-.26966292){89}{\line(0,-1){.26966292}}
\multiput(27,32)(.03370787,-.26966292){89}{\line(0,-1){.26966292}}
\put(18.5,31.5){\oval(29,7)[]}
\put(18.5,7.5){\oval(29,7)[]}
\put(49,31.5){\oval(15,7)[]}
\put(49,37){$X_{1}$}
\put(74,37){$X_{2}$}
\put(75,31.5){\oval(15,7)[]}
\put(43,32){$x_{i}$}
\put(49,7.5){\oval(15,7)[]}
\put(82,7){\line(0,1){0}}
\put(49,0){$Y_{1}$}
\put(74,0){$Y_{2}$}
\put(75,7.5){\oval(15,7)[]}
\put(43,5){$y_{i}$}
\put(3,3){\dashbox{1}(31,33)[cc]{}}
\put(15,0){$Y_0$}
\put(0,18){$P$}
\put(11,5){$y'_{i}$}
\put(15,37){$X_0$}
\put(44,32){\line(0,-1){24}}
\put(53,32){\line(0,-1){24}}
\put(48,32){\line(0,-1){24}}
\put(70,32){\line(0,-1){24}}
\put(79,32){\line(0,-1){24}}
\put(74,32){\line(0,-1){24}}
\multiput(70,32)(-.0336906585,-.0367534456){653}{\line(0,-1){.0367534456}}
\multiput(74,32)(-.0337078652,-.0385232745){623}{\line(0,-1){.0385232745}}
\multiput(79,32)(-.03651685393,-.03370786517){712}{\line(-1,0){.03651685393}}
\multiput(44,32)(-.04353932584,-.03370786517){712}{\line(-1,0){.04353932584}}
\multiput(53,32)(-.04073033708,-.03370786517){712}{\line(-1,0){.04073033708}}
\put(20,30){$\cdots$}
\put(19,7){$\cdots$}
\put(48,32){\line(-1,-1){24}}
\put(86,16){$\cdots$}
\put(116,37){$X_{s}$}
\put(117,31.5){\oval(15,7)[]}
\put(142,31.5){\oval(15,7)[]}
\put(138,37){$X_{s+1}$}
\put(124,7){\line(0,1){0}}
\put(116,0){$Y_{s}$}
\put(117,7.5){\oval(15,7)[]}
\put(142,7.5){\oval(15,7)[]}
\put(93,0){$Y_{s-1}$}
\put(138,0){$Y_{s+1}$}
\put(112,32){\line(0,-1){24}}
\put(121,32){\line(0,-1){24}}
\put(116,32){\line(0,-1){24}}
\put(137,32){\line(0,-1){24}}
\put(146,32){\line(0,-1){24}}
\put(141,32){\line(0,-1){24}}
\multiput(137,32)(-.0337349398,-.0530120482){415}{\line(0,-1){.0530120482}}
\multiput(141,32)(-.0337301587,-.0456349206){504}{\line(0,-1){.0456349206}}
\multiput(146,32)(-.0336906585,-.0367534456){653}{\line(0,-1){.0367534456}}
\put(96,7.5){\oval(14,7)[]}
\multiput(112,32)(-.0337078652,-.0393258427){534}{\line(0,-1){.0393258427}}
\multiput(116,32)(-.0337078652,-.0393258427){534}{\line(0,-1){.0393258427}}
\multiput(121,32)(-.0337268128,-.0370994941){593}{\line(0,-1){.0370994941}}
\put(179,31.5){\oval(13,7)[]}
\put(178,37){$X_{h}$}
\put(179,7.5){\oval(13,7)[]}
\put(160,0){$Y_{h-1}$}
\put(179,0){$Y_{h}$}
\put(176,32){\line(0,-1){24}}
\put(181,32){\line(0,-1){24}}
\multiput(176,32)(-.0336787565,-.0544041451){386}{\line(0,-1){.0544041451}}
\put(181,32){\line(-2,-3){14}}
\put(155,16){$\cdots$}
\put(163,7.5){\oval(14,7)[]}
\put(40,27){\dashbox{1}(148,9)[cc]{}}
\put(40,3){\dashbox{1}(148,9)[cc]{}}
\put(189,30){$\tilde{X}_0$}
\put(189,5){$\tilde{Y}_0$}
\end{picture}

  \caption{\footnotesize The graph $T^*_{-}$}\label{main}
\end{figure}

Let $P=v_0v_1\cdots v_q$ be a control path of $T^{*}$,
which is also a path in main tree $T^*_-$ connecting two leaves $v_0$ and $v_q$.
Let $X_0$ and $Y_0$ be respectively the sets of odd and even vertices in $P$ and clearly $T^{*}_{-}[X_0\cup Y_0]=P$.
Let $\tilde{X}_0= V^*_1\setminus X_0$ and $\tilde{Y}_0= V^*_2\setminus Y_0$.
The induced subgraph $H=T^{*}_{-}[\tilde{X}_0\cup \tilde{Y}_0]=T^{*}_{-}\setminus P$ is described in  Fig.\ref{main}.
By  Proposition \ref{eq-degree1}, odd vertex $v\in X_0$ can only join two even vertices on  the path $P$, we have $E_{T^*_{-}}(X_0,\tilde{Y}_0)=\emptyset$ and thus $N_{T^*_{-}}(\tilde{Y}_0)\subseteq \tilde{X}_0$ because $T^*_{-}=(V_1^* ,V_2^*)$ is bipartite (see Fig.\ref{main}).

Now we consider the induced subgraph $H=T^{*}_{-}[\tilde{X}_0\cup \tilde{Y}_0]$.
Recall that $d_{T^{*}_{-}}(x)=2$ for any odd vertex $x\in \tilde{X}_0$, its two  neighbors $y$ and $y'$ belong to $V_2^*=Y_0\cup \tilde{Y}_0$.
Obviously, $y$ and $y'$ can not simultaneously belong to $Y_0$, since otherwise $T^*_{-}$ will form a cycle containing $x$, $y$ and $y'$.
Denote by $X_{1} $ the subset of $\tilde{X}_0$ consisting of the vertex $x\in \tilde{X}_0$ such that its one neighbor $y'\in Y_0$ and another $y\in \tilde{Y}_0$.
Then  $\tilde{X}_0\setminus X_{1}$ consists of vertices in $\tilde{X}_0$ whose two neighbors are in $\tilde{Y}_0$ (see Fig.\ref{main}).

Let $X_{1}=\{x_{1},x_{2},...,x_{t}\}$ and $Y_{1}=N_H(X_{1})\subseteq \tilde{Y}_{0}$ (see Fig.\ref{main}). Clearly, $x_{i}$ has two neighbors $y'_{i}\in Y_0$ and $y_{i}\in Y_1$.
For $i\not=j$, we see  that $y_{i}\not=y_{j}$ since otherwise $T^*_-$ has a cycle containing $x_{i}$  and $x_{j}$. It implies that $|X_{1}|=|Y_{1}|$ (see Fig.\ref{main}).
In addition, the $t$ vertices of $X_{1}$ belong to distinct components of $H$ since otherwise the component, say containing $x_{1}$ and $x_{2}$, will join at the vertices of $P$ and form a cycle in $T^*_{-}$. Thus $H$ has at least $t$ components. Next
we  recursively  define $X_i$ and $Y_i$ for $i=2,3,...$ (refer to Fig.\ref{main}):
$$\begin{array}{ll}
&X_2=N_H(Y_1)\setminus X_1\subseteq \tilde{X}_0, \ \ Y_2=N_H(X_2)\setminus Y_1\subseteq \tilde{Y}_0,\  \ ...\\
&X_j=N_H(Y_{j-1})\setminus X_{j-1}\subseteq \tilde{X}_0,\ \ Y_j=N_H(X_j)\setminus Y_{j-1}\subseteq \tilde{Y}_0, \ \ ...
\end{array}$$
Note that the leaves of  $T^*_{-}$  are the control vertices of $T^*$ and included in $\tilde{Y}_0$. Each $Y_j\subset \tilde{Y}_0$ would contain some leaves of  $T^*_{-}$.  Without loss of generality, we can assume that  $Y_h$ contains only  leaves of $T^*_-$ (see Fig.\ref{main}).
It follows that
$\tilde{X}_0=\bigcup^h_{s=1}X_{s} \mbox{ and } \tilde{Y}_0=\bigcup^h_{s=1}Y_{s}$
are two partitions.
Since $T^*_{-}$ is connected and $X_1$ is a cut set, for any vertex  $ v\in\tilde{X}_0\cup \tilde{Y}_0$ there exists exactly one vertex $x \in X_1$ such that $v$ and $x$ are connected by one path in $H$. Thus,   $H$ has $|X_{1}|$ components. Moreover, for $1\le s \le h$ we know that each $x\in X_{s} $  has degree $d_H(x)=2$ and its one neighbor $y'\in Y_{s-1}$ and another $y\in Y_s$ (see Fig.\ref{main}). Such  neighbors $y'$ and $y$ are uniquely determined by $x$ and thus it induces two maps $\sigma': x\longrightarrow y'=\sigma'(x)$ and $\sigma: x\longrightarrow y=\sigma(x)$ for $x\in \tilde{X}_{0}$.  It is easy to see that $\sigma$ is a bijection, the induced subgraph $T^*_{-}[X_{s},Y_{s}]$  consists of  independent edges, denoted by $M_s=\{x\sigma(x)\mid x\in X_{s} \}$ and so  $|X_{s}|=|Y_{s}|$.
Thus, the edge set $E(T^*_{-})$ has the partition:
$$E(T^*_{-})=E(P)\ \bigcup\  (\cup^h_{s=1} M_s)\ \bigcup\
( \cup^h_{s=1} E(X_{s},Y_{s-1}) ),$$
where  $E(X_{s},Y_{s-1})=\{x\sigma'(x) \mid  x \in X_s\}$ for $s=1,..., h$.
Let $E'= \cup^h_{s=1} E(X_{s},Y_{s-1})=\{x\sigma'(x) \mid x\in \tilde{X}_{0}\}$.
 Then $|E'|=\sum^h_{s=1}|X_{s}|=|\tilde{X}_0|$.
Recall that $|X_0|=\frac{q}{2}$ and $|V^*_1|=n-\alpha-1$, we  have
$|\tilde{X}_0|=|V^*_1\setminus X_0|=n-\alpha-1-\frac{q}{2}$.
So $h\le \sum^h_{s=1}|X_{s}|=|\tilde{X}_0|=n-\alpha-1-\frac{q}{2}$.
On the other hand, for each $y_h\in Y_h$, we have $d_{T^*_-}(y_h,P)=2h$, where $d_{T^*_-}(y_h,P)$ is the minimum distant between $y_h$ and $v$ for any $v\in V(P)$.
Hence $d_{T^*_-}(y_h,P)+\frac{q}{2}=2h+\frac{q}{2}\le diam(T^*_-)$, and so
$h\le \lfloor\frac{diam(T^*_-)}{2}-\frac{q}{4}\rfloor$.
Summarizing the above discussion, we obtain the following results that are described by the above symbols.
\begin{pro}\label{lem-matching}
Let $P=v_0v_1\cdots v_q$ be a control path of $T^{*}$,
$X_0$ and $Y_0$ be the sets of odd and even vertices in $P$, respectively. Let 
$X_s$ and  $Y_s$ be defined above for $s=1, ..., h$
(see Fig.\ref{main}). Then\\
(i) $h\le \min\{  \ n-\alpha-1-\frac{q}{2}, \lfloor\frac{ diam(T^*_-)}{2}-\frac{q}{4}\rfloor \ \}$. Particularly,
if the control path $P$ of $T^{*}$
is a diameter path of $T^*_-$, then $h\le \min\{ \ n-\alpha-1-\frac{diam(T^*_-)}{2}, \lfloor\frac{ diam(T^*_-)}{4}\rfloor  \ \}$,\\
(ii) $V^*_1=\bigcup^h_{s=0} X_s$ and $V^*_2=\bigcup^h_{s=0} Y_s$ are two partitions,\\
 (iii) $T^*_{-}[X_0\cup Y_0]=P$, and $T^*_{-}[X_s\cup Y_s]=M_s$ is an edge independent set for $1\le s\le h$,  \\
 (iv) The edge set $E(T^*_{-})$ has the partition:
$E(T^*_{-})=E(P)\ \bigcup\  (\cup^h_{s=1} M_s)\ \bigcup\ \{x\sigma'(x) \mid x\in \cup^h_{s=1} X_s\}$.
Moreover, the number of edges in the three partitions is $q$, $n-\alpha-1-\frac{q}{2}$ and $n-\alpha-1-\frac{q}{2}$, respectively,\\
(v) The induced subgraph $H=T^*_{-}[(\cup^h_{s=1} X_s) \bigcup (\cup^h_{s=1} Y_s)]$  has the unique perfect matching  $\cup^h_{s=1} M_s$ and  $|X_1|$ components.
\end{pro}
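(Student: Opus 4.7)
The plan is to verify the five assertions in order, leveraging the recursive construction of $X_s,Y_s$ introduced above the statement, the bipartite tree structure of $T^*_-$ from Proposition \ref{eq-degree1}, and the cardinality bookkeeping of Proposition \ref{lem-main-tree}.

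For (ii), since $T^*_-$ is connected and the recursion repeatedly takes neighborhoods within $H$, every vertex of $\tilde X_0\cup \tilde Y_0$ is reached by a path from $V(P)$; hence $\bigcup_{s=1}^h X_s=\tilde X_0$ and $\bigcup_{s=1}^h Y_s=\tilde Y_0$, so together with $X_0,Y_0$ they partition $V^*_1$ and $V^*_2$. Part (iii) then follows: $T^*_-[X_0\cup Y_0]=P$ because any further chord would close a cycle in the tree, while $T^*_-[X_s\cup Y_s]=M_s$ holds because each $x\in X_s$ has $d_H(x)=2$ with exactly one neighbor in $Y_{s-1}$ and the other (namely $\sigma(x)$) in $Y_s$, so inside $X_s\cup Y_s$ only the matching edges $\{x\sigma(x)\}$ survive. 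For (iv) the edge partition is automatic from the construction, and the three counts are verified against the tree identity $m(T^*_-)=n(T^*_-)-1=2(n-\alpha)-2$ together with $|X_0|=q/2$ and $|\tilde X_0|=n-\alpha-1-q/2$.

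Part (v) requires two observations. Uniqueness of the perfect matching $\bigcup_{s=1}^h M_s$ of $H$ follows because a subgraph of a tree is a forest, and a forest admits at most one perfect matching (any leaf forces its incident edge to lie in every perfect matching; induct on what remains). For the component count, $H$ is a forest with $2|\tilde X_0|$ vertices and $2|\tilde X_0|-|X_1|$ edges (the matchings $M_s$ contribute $|\tilde X_0|$, and the edges $x\sigma'(x)$ for $x\in\bigcup_{s\ge 2}X_s$ contribute $|\tilde X_0|-|X_1|$), so by the standard forest identity $H$ has exactly $|X_1|$ components.

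Finally for (i), the first bound $h\le n-\alpha-1-q/2$ is immediate from $h\le \sum_{s=1}^h|X_s|=|\tilde X_0|$ since each $X_s$ is non-empty. The second bound, which I expect to be the main technical step, comes from a diameter argument: any $y_h\in Y_h$ satisfies $d_{T^*_-}(y_h,V(P))=2h$, and prolonging the unique $y_h$-to-$P$ path to the farther endpoint of $P$ (which is at distance at least $q/2$ from the nearest $P$-vertex) produces a path of length at least $2h+q/2$ in the tree, so $diam(T^*_-)\ge 2h+q/2$ and taking floors gives $h\le \lfloor diam(T^*_-)/2-q/4\rfloor$. Specialising to the case where $P$ itself realises the diameter amounts to substituting $q=diam(T^*_-)$, which collapses the two bounds into the stated form.
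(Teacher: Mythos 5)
Your proposal is correct and follows essentially the same route as the paper, which establishes all five parts in the discussion preceding the proposition (connectivity of $T^*_-$ for the partitions in (ii), the degree-two structure of vertices in $\tilde X_0$ and the acyclicity of the tree for (iii)--(iv), and the relation $d_{T^*_-}(y_h,P)=2h$ combined with $diam(T^*_-)\ge 2h+\frac{q}{2}$ for (i)). The only cosmetic differences are that you count components of $H$ via the forest identity $|V|-|E|$ instead of the paper's cut-set argument with $X_1$, and you supply the (omitted in the paper) leaf-forcing induction for uniqueness of the perfect matching; both are fine.
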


Given a minimizer graph $T^*\in \mathbb{G}_{n,\alpha}$, usually its main tree $T^*_-$ is not unique in which we may take a control path
as   $P=v_0v_1\cdots v_d$ for $d=diam(T^*_-)$. By Proposition \ref{lem-matching}(iv), we know that $T^*_-$ is composed of $P$, $X_s$ and $Y_s$ along with the edges between them. To exactly, the  edges between $X_s$ and $Y_s$ is an edge independent set $M_s=\{x_{s1}y_{s1},x_{s2}y_{s2},...,x_{st_s}y_{st_s}\}$ determined by
 $\sigma:x_{si}\longleftrightarrow y_{si}$ for $i=1,2,...,t_s$. The edges between $X_s$ and $Y_{s-1}$ are labelled as $x_{sj}y_{s-1,j'}$ which is determined by $\sigma':x_{sj}\longrightarrow y_{s-1,j'}$.
 By using $\sigma'$ we now define a map $\sigma^*:M_s\longrightarrow M_{s-1}$ by   $\sigma^*(x_{sj}y_{sj})=x_{s-1,j'}y_{s-1,j'}$ ( if $s=1$ then we define $\sigma^*(x_{1j}y_{1j})=v_{s_j-1}v_{s_j}$ for some even vertex $v_0,v_d\not=v_{s_j}\in V(P)$ ).
 In this way, $\sigma^*$ determines main tree $T^*_-$, and thus  $T^*_-$  can be  written  by $T^*_-= T(d; M_1, M_2,...,M_h;\sigma^*)$.
By Proposition \ref{main-tree-diam}   and Proposition \ref{lem-matching}(i)(iv) we can  determine  the  parameters as in (\ref{eq-t})
\begin{equation}\label{eq-t}
\left\{\begin{array}{ll}
d=2(n-\alpha)-2 \mbox{ if $\alpha\in [ n-2, n-1]$}; 4\le  d\le 2(n-\alpha)-2 \mbox{ if $\alpha\in [\lceil\frac{n}{2}\rceil, n-3]$}\\
h\le \min\{ n-\alpha-1-\frac{d}{2}, \lfloor\frac{ d}{4}\rfloor \}\\
 \sum^h_{s=1} |M_s|=n-\alpha-1-\frac{d}{2}\\
\sigma^*:M_s\longrightarrow M_{s-1},\  1\le s \le h
  \end{array}\right.
  \end{equation}
All such main trees satisfying (\ref{eq-t}) we collect in the set  $\mathcal{T}^*_-(n,\alpha)$.
For example,
 let $\alpha=n-1$ ($n-2$ or $n-3$),  (\ref{eq-t}) becomes $d=0$ ($2$ or $4$), $h=0$ and $M_s=\emptyset$.
So $\mathcal{T}^*_-(n,n-1)=\{P_1\}$, $\mathcal{T}^*_-(n,n-2)=\{P_3\}$ and $\mathcal{T}^*_-(n,n-3)=\{P_5\}$ (see Fig.\ref{main-tree}),
which are just the results of   Theorems 3.1 and 3.2 in \cite{Xu}.
For $\alpha=n-4$,  (\ref{eq-t}) becomes
$$\left\{\begin{array}{ll}d\in \{4,6\}\\
h\le \min\{ 3-\frac{d}{2}, \lfloor\frac{ d}{4}\rfloor \}\\
\sum^h_{s=1} |M_s|=3-\frac{d}{2}\\
\sigma^*:M_s\longrightarrow M_{s-1},\  1\le s \le h
\end{array}\right.\Rightarrow
\left\{\begin{array}{ll}d=4\\
h= 1\\
\sum^h_{s=1} |M_s|=1\\
\sigma^*:M_s\longrightarrow M_{s-1},\   1\le s \le h
\end{array}\right.
\mbox{ or }
\left\{\begin{array}{ll}d=6\\
h=0\\
M_s=\emptyset
\end{array}\right.
$$
If the former occurs then $\sigma^*(x_{11}y_{11})=v_1v_2$ and we obtain a main tree $F_1=T(4; \{x_{11}y_{11}\}; \sigma^*)$; if the later occurs we obtain a main tree $F_2=T(6; \emptyset; \sigma^*)=P_7$. Thus
 $\mathcal{T}^*_-(n,n-4)=\{F_1,F_2\}$ (see Fig.\ref{main-tree}),
and it is just the result of   Theorem 1.3 in \cite{Lou}.

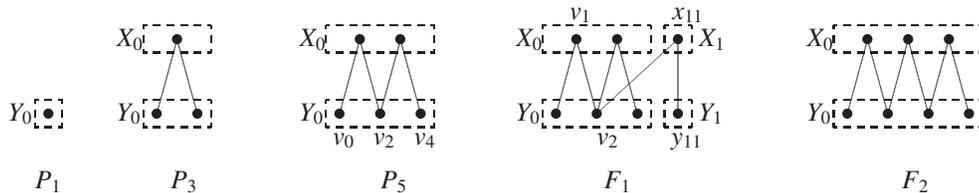
\begin{figure}[h]
  \centering
   \footnotesize
\unitlength 0.9mm 
\linethickness{0.4pt}
\ifx\plotpoint\undefined\newsavebox{\plotpoint}\fi 
\begin{picture}(146,24.5)(0,0)
\put(26,0){$P_3$}
\put(50,6.5){$v_0$}
\put(56,6.5){$v_2$}
\put(62,6.5){$v_4$}
\put(57,0){$P_5$}
\put(24,11){\circle*{1.5}}
\put(30,11){\circle*{1.5}}
\multiput(27,22)(-.03370787,-.12359551){89}{\line(0,-1){.12359551}}
\multiput(27,22)(.03370787,-.12359551){89}{\line(0,-1){.12359551}}
\put(27,22){\circle*{1.5}}
\put(51,11){\circle*{1.5}}
\put(57,11){\circle*{1.5}}
\multiput(54,22)(-.03370787,-.12359551){89}{\line(0,-1){.12359551}}
\multiput(54,22)(.03370787,-.12359551){89}{\line(0,-1){.12359551}}
\put(54,22){\circle*{1.5}}
\put(63,11){\circle*{1.5}}
\multiput(60,22)(-.03370787,-.12359551){89}{\line(0,-1){.12359551}}
\multiput(60,22)(.03370787,-.12359551){89}{\line(0,-1){.12359551}}
\put(60,22){\circle*{1.5}}
\put(22,9){\dashbox{1}(10,4)[cc]{}}
\put(22,20){\dashbox{1}(10,4)[cc]{}}
\put(49,9){\dashbox{1}(16,4)[cc]{}}
\put(49,20){\dashbox{1}(16,4)[cc]{}}
\put(45,21){$X_0$}
\put(45,10){$Y_0$}
\put(18,21){$X_0$}
\put(18,10){$Y_0$}
\put(2,10){$Y_0$}
\put(6,0){$P_1$}
\put(6,9){\dashbox{1}(4,4)[cc]{}}
\put(8,11){\circle*{1.5}}
\put(85,25){$v_{1}$}
\put(89,6.5){$v_{2}$}
\put(90,0){$F_1$}
\put(83,11){\circle*{1.5}}
\put(89,11){\circle*{1.5}}
\multiput(86,22)(-.03370787,-.12359551){89}{\line(0,-1){.12359551}}
\multiput(86,22)(.03370787,-.12359551){89}{\line(0,-1){.12359551}}
\put(86,22){\circle*{1.5}}
\put(95,11){\circle*{1.5}}
\multiput(92,22)(-.03370787,-.12359551){89}{\line(0,-1){.12359551}}
\multiput(92,22)(.03370787,-.12359551){89}{\line(0,-1){.12359551}}
\put(92,22){\circle*{1.5}}
\put(81,9){\dashbox{1}(16,4)[cc]{}}
\put(81,20){\dashbox{1}(16,4)[cc]{}}
\put(101,22){\line(0,-1){11}}
\put(101,22){\circle*{1.5}}
\put(101,11){\circle*{1.5}}
\multiput(101,22)(-.0366972477,-.0336391437){327}{\line(-1,0){.0366972477}}
\put(99,20){\dashbox{1}(4,4)[cc]{}}
\put(99,9){\dashbox{1}(4,4)[cc]{}}
\put(104,21){$X_1$}
\put(104,10){$Y_1$}
\put(100,25){$x_{11}$}
\put(100,6.5){$y_{11}$}
\put(77,21){$X_0$}
\put(77,10){$Y_0$}
\put(134,0){$F_2$}
\put(126,11){\circle*{1.5}}
\put(132,11){\circle*{1.5}}
\multiput(129,22)(-.03370787,-.12359551){89}{\line(0,-1){.12359551}}
\multiput(129,22)(.03370787,-.12359551){89}{\line(0,-1){.12359551}}
\put(129,22){\circle*{1.5}}
\put(138,11){\circle*{1.5}}
\multiput(135,22)(-.03370787,-.12359551){89}{\line(0,-1){.12359551}}
\multiput(135,22)(.03370787,-.12359551){89}{\line(0,-1){.12359551}}
\put(135,22){\circle*{1.5}}
\put(144,11){\circle*{1.5}}
\multiput(141,22)(-.03370787,-.12359551){89}{\line(0,-1){.12359551}}
\multiput(141,22)(.03370787,-.12359551){89}{\line(0,-1){.12359551}}
\put(141,22){\circle*{1.5}}
\put(124,9){\dashbox{1}(22,4)[cc]{}}
\put(124,20){\dashbox{1}(22,4)[cc]{}}
\put(120,21){$X_0$}
\put(120,10){$Y_0$}
\end{picture}

  \caption{\footnotesize The main trees}\label{main-tree}
\end{figure}

Recall that  $T^*_-=T^*\setminus L(T^*)$ and  the leaves of $T^*$ can only hang at some even vertices of $T^*$, to exactly  $u\in V^*_2$ hangs $l(u)$ leaves (we specify $l(u)=0$ if $u$ hangs no any leaf).
Denote by $l_{V_2^*}=(l(u)\mid u\in V_2^*)$, we conclude that $T^*$ can be presented by $T^*=T^*_{-}\circ l_{V_2^*}$. It  naturally  follows the result.
\begin{thm}\label{thm-minimizer-main}
Let $T^*$ be the minimizer graph in $\mathbb{G}_{n,\alpha}$, where $\alpha\ge \lceil\frac{n}{2}\rceil$.  Then there exists $T^*_{-}\in \mathcal{T}^*_-(n,\alpha)$ such that
 $T^*=T^*_{-}\circ l_{V_2^*}$, where $T^*_{-}=(V_1^*,V_2^*)$ and $l_{V_2^*}=(l(u)\mid u\in V_2^*)$ satisfying $\sum_{u\in V_2^*}l(u)=|L(T^*)|=2\alpha-n+1$.
\end{thm}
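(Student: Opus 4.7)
The plan is to assemble this theorem directly from the structural results built up through Propositions \ref{pro-path}--\ref{lem-matching}, so the proof is largely a synthesis rather than new analysis. First I would set $T^*_- = T^* \setminus L(T^*)$ and recall that by Proposition \ref{eq-degree1}, $T^*_-$ is a tree with bipartition $(V_1^*, V_2^*)$, where the degree of every odd vertex equals $2$. Then, invoking Proposition \ref{pro-path}, the leaves of $T^*$ may be attached only at vertices of $V_2^*$ (an odd vertex has degree $2$ in $T^*_-$, hence also in $T^*$, so no leaf hangs from it). This already gives the decomposition $T^* = T^*_- \circ l_{V_2^*}$ where $l(u)$ counts the leaves of $T^*$ at $u \in V_2^*$. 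By Proposition \ref{lem-main-tree} we have $|L(T^*)| = 2\alpha - n + 1$, which yields $\sum_{u \in V_2^*} l(u) = 2\alpha - n + 1$.

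The substantive step is to verify that $T^*_- \in \mathcal{T}^*_-(n,\alpha)$, i.e.\ that $T^*_-$ admits a presentation $T(d; M_1, \ldots, M_h; \sigma^*)$ satisfying the list of constraints \eqref{eq-t}. For this, I would pick a diameter path $P = v_0 v_1 \cdots v_d$ of $T^*_-$. By Proposition \ref{main-tree-diam}, $d = \mathrm{diam}(T^*_-)$ is even and lies in the allowed ranges, and $P$ is a control path of $T^*$ (its endpoints are leaves of $T^*_-$, hence control vertices of $T^*$). With this choice of $P$, the even/odd labelling of $V(T^*_-) \setminus L(T^*_-)$ along $P$ determines the sets $X_0, Y_0, \tilde{X}_0, \tilde{Y}_0$ exactly as in the discussion preceding Proposition \ref{lem-matching}, and the recursive construction of $X_s, Y_s$ and the matchings $M_s$ proceeds verbatim. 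Proposition \ref{lem-matching}(i) supplies the upper bound $h \le \min\{n-\alpha-1-\tfrac{d}{2}, \lfloor d/4 \rfloor\}$, part (iv) supplies the edge-count identity $\sum_{s=1}^h |M_s| = n - \alpha - 1 - d/2$, and the map $\sigma^*: M_s \to M_{s-1}$ is precisely the one induced by $\sigma'$ in the paragraph preceding the theorem.

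Finally, I would check that the recipe $T^*_- = T(d; M_1, \ldots, M_h; \sigma^*)$ truly reconstructs $T^*_-$ from these data. By Proposition \ref{lem-matching}(ii)--(iv), $V_1^* \cup V_2^*$ is exactly $\bigcup_{s=0}^h (X_s \cup Y_s)$, and $E(T^*_-)$ is the disjoint union $E(P) \cup \bigcup_{s=1}^h M_s \cup \{x\sigma'(x) : x \in \bigcup_{s \ge 1} X_s\}$; the last set of edges is encoded by $\sigma^*$. Hence the tuple $(d, M_1, \ldots, M_h, \sigma^*)$ determines $T^*_-$ up to relabelling, and combined with $l_{V_2^*}$ it determines $T^*$ itself.

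The mildly delicate point, the one I would treat with care, is the compatibility of the choice of control path $P$ used to label vertices as even/odd with the choice of $P$ as a diameter path; but the remark preceding Proposition \ref{eq-degree1} already observes that the parity of a vertex's index is independent of the specific control path used, so this is actually automatic. Beyond that, every clause of \eqref{eq-t} is already a theorem proved earlier, and the statement is the natural packaging of those theorems into a single structural description.
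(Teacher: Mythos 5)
Your proposal is correct and follows essentially the same route as the paper: the paper's own justification is precisely the assembly of Propositions \ref{pro-path}, \ref{eq-degree1}, \ref{lem-main-tree}, \ref{main-tree-diam} and \ref{lem-matching}, taking a diameter path of $T^*_-$ as the control path so that $T^*_-=T(d;M_1,\ldots,M_h;\sigma^*)$ satisfies (\ref{eq-t}) and hence lies in $\mathcal{T}^*_-(n,\alpha)$. Your added remarks on the parity labelling being independent of the choice of control path and on the endpoints of the diameter path being control vertices are exactly the (implicit) checks the paper relies on.
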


Given $\alpha=n-k$ for  $1\le k\le \frac{n}{2}$, let $\mathbb{G}_{n,\alpha}^k=\{G\in \mathbb{G}_{n,\alpha}\mid n-\alpha=k\}$. For $T^*\in \mathbb{G}_{n,\alpha}^k$, Theorem \ref{thm-minimizer-main} tell us that $T^*=T^*_{-}\circ l_{V_2^*}$.  $T^*_{-}$ is usually  not unique and  determined by (\ref{eq-t1})
\begin{equation}\label{eq-t1}
\left\{\begin{array}{ll}
 d= 2k-2 \mbox{ if $k\in [1,2]$};
4\le  d\le 2k-2 \mbox{ if $k\in [3,\frac{n}{2}]$}\\
h\le \min\{k-1-\frac{d}{2}, \lfloor\frac{ d}{4}\rfloor \}\\
 \sum^h_{s=1} |M_s|=k-1-\frac{d}{2}\\
\sigma^*:M_s\longrightarrow M_{s-1},\  1\le s \le h&
  \end{array}\right.
  \end{equation}
We collect all such $T^*_{-}$ in the set  $\mathcal{T}^*_-(k)$. Moreover, by Proposition \ref{eq-degree1}, $T^*_{-}=(V_1^*,V_2^*)$  is bipartite ( refer to Fig.\ref{main}) satisfying $d_{T^*_{-}}(v)=d_{T^*}(v)=2$ for $v\in V_1^*$ and the leaves of $T^*_{-}$ are included in $V_2^*$ that are just the control vertices of $T^*$. Additionally, $|V_2^*|=k$ and $|V_1^*|=k-1$ by Proposition \ref{lem-main-tree}. The leaf sequence $l_{V_2^*}=(l(u)\mid u\in V_2^*)$ satisfies $\sum_{u\in V_2^*}l(u)=|L(T^*)|=2\alpha-n+1=n-2k+1$, which will be further characterized in detail in next section.

\section{Constructing Theorem for the minimizer graphs of $\mathbb{G}_{n,\alpha}^k$}
In this section, we will give  a constructing theorem for the minimizer graph  $T^*\in \mathbb{G}_{n,\alpha}^k$.  According to Theorem \ref{thm-minimizer-main}, $T^*$ comes from its main tree $T^*_-=(V_1^*,V_2^*)$ by adding a leaf sequence $l_{V_2^*}=(l(u)\mid u\in V_2^*)$ on $V_2^*$. Denote by $\bar{l}=\lfloor\frac{|L(T^*)|}{|V^*_2|}\rfloor=\lfloor\frac{n-2k+1}{k}\rfloor$ the average value  of $l_{V_2^*}$.
There exists a unique $0\le r \le k-1$ such that   $n-2k+1\equiv r$ ( $\mbox{mod } k$ ), i.e., $n+1\equiv r$ ( $\mbox{mod } k$ )  ( Note that henceforth we always adhere to this choice   for $r$ ), we have
$|L(T^*)|=n-2k+1=k\bar{l}+r$,
 $n(T^*)=n=2k-1+k\bar{l}+r$ and \begin{equation}\label{ll-eq}
\frac{n-3k+2}{k}\le
\bar{l}=\frac{|L(T^*)|-r}{k}=\frac{n-2k+1-r}{k}
\le\frac{n-2k+1}{k}.
\end{equation}

\begin{lem}\label{lem-bound}
For given $1\le k=n-\alpha\le \frac{n}{2}$, let $T^*$ be the minimizer graph in $\mathbb{G}^k_{n,\alpha}$. Then $\rho(T^*)<\sqrt{\bar{l}+5}$, where $\bar{l}=\lfloor\frac{n-2k+1}{k}\rfloor$.
\end{lem}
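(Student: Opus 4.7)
The plan is to exhibit a specific graph $G \in \mathbb{G}_{n,n-k}^k$ with $\rho(G) < \sqrt{\bar l+5}$; minimality of $T^*$ then yields $\rho(T^*) \le \rho(G) < \sqrt{\bar l + 5}$. The natural choice of $G$ is a uniformly-decorated path, whose spectral radius is controlled by Csikvári's formula (Lemma \ref{lem-bipartite}).

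Concretely, I would label the vertices of $P_{2k-1}$ as $v_0 v_1 \cdots v_{2k-2}$ and view it as a bipartite graph with parts $V_2 = \{v_0, v_2, \ldots, v_{2k-2}\}$ and $V_1 = \{v_1, v_3, \ldots, v_{2k-3}\}$, and form $G'$ by attaching $\bar l + 1$ new pendant leaves to every vertex of $V_2$. By Lemma \ref{lem-bipartite},
$$\rho(G') = \sqrt{\rho^2(P_{2k-1}) + \bar l + 1}.$$
Since $\rho(P_{2k-1}) = 2\cos(\pi/(2k)) < 2$, this gives $\rho(G') < \sqrt{\bar l + 5}$. Next, I would obtain $G$ from $G'$ by deleting one pendant leaf at each of $k-r$ chosen vertices in $V_2$; the resulting tree has $r$ vertices of $V_2$ carrying $\bar l + 1$ leaves and $k-r$ vertices carrying $\bar l$ leaves, hence exactly $n = 2k-1 + k\bar l + r$ vertices. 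Because $G$ is a subgraph of the connected $G'$, Lemma \ref{lem-subgraph-radius} gives $\rho(G) \le \rho(G') < \sqrt{\bar l + 5}$.

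It remains to verify that $G \in \mathbb{G}_{n,n-k}^k$, i.e., $\alpha(G) = n-k$. The union of $V_1$ with all the pendant leaves of $G$ is an independent set of size $(k-1) + (n-2k+1) = n-k$, so $\alpha(G) \ge n - k$. For the reverse inequality, since $G$ is a tree (hence bipartite) we have $\alpha(G) = n - \nu(G)$, so it suffices to produce a matching of size $k$ in $G$. When $\bar l \ge 1$ every $v_{2i}$ has a leaf, so pairing each $v_{2i}$ with one of its own leaves gives such a matching immediately. When $\bar l = 0$ the hypothesis $n \ge 2k$ forces $r \ge 1$, so the set $T \subseteq \{0,\ldots,k-1\}$ of indices with a leaf at $v_{2i}$ is nonempty; matching each $v_{2i}$ with $i \in T$ to one of its leaves and matching the remaining ``barren'' $v_{2i}$ along path edges to distinct odd neighbors in $V_1$ completes the matching. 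The main obstacle is this last step in the sparse regime $\bar l = 0$, which I would handle by a short Hall-type check on the maximal runs of consecutive barren indices: such a run of length $t$ is adjacent to $t+1$ odd vertices unless it touches an endpoint of $\{0,\ldots,k-1\}$ (in which case it is adjacent to exactly $t$, with equality being tight), and runs are automatically non-adjacent so their neighborhoods are disjoint. The nonemptiness of $T$ prevents the pathological case of a single run covering the entire index set, so Hall's condition holds and the desired matching exists.
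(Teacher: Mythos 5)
Your proposal is correct and follows essentially the same route as the paper: both build $P_{2k-1}\circ(\bar l+1)\mathbf{1}$ on the $k$-vertex side of the bipartition, invoke Lemma \ref{lem-bipartite} with $\rho(P_{2k-1})=2\cos\frac{\pi}{2k}<2$, delete $k-r$ pendant leaves to land in $\mathbb{G}^k_{n,\alpha}$, and finish by monotonicity of the spectral radius. The only difference is that you verify $\alpha(G)=n-k$ explicitly (via K\H{o}nig--Gallai and a Hall-type check in the $\bar l=0$ regime), a point the paper asserts without detail; this is a welcome refinement rather than a different method.
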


\begin{proof}
Let $P_{2k-1}$ be a path with bipartite partition $(X,Y)$ such that $X$ contains two end vertices of $P_{2k-1}$. Then $|X|=k$ and $|Y|=k-1$. We now construct a tree
$T=P_{2k-1}\circ (\bar{l}+1)\mathbf{1}_{X}$. Since $\rho(P_{2k-1})=2\cos\frac{\pi}{2k}<2$,
by Lemma \ref{lem-bipartite} we have $$\rho(T)=\sqrt{\rho^2(P_{2k-1})+\bar{l}+1}<\sqrt{\bar{l}+5}.$$
On the other aspect, recall that  $|L(T^*)|=k\bar{l}+r$, where $0\le r \le k-1$, we have $$|L(T)|=|X|(\bar{l}+1)=k(\bar{l}+1)=|L(T^*)|+k-r>|L(T^*)|.$$
Thus we can construct a subtree $T'$ by deleting $k-r$ leaves from $T$ such that $|L(T')|=|L(T^*)|$. Moreover,
$$\begin{array}{ll}
n(T')&=n(T)-(k-r)\\
&=|P_{2k-1}|+|X|(\bar{l}+1)-(k-r)\\
&=2k-1+ k\bar{l}+r\\&=n, \\
\alpha(T')&=\alpha(T)-(k-r)\\
&=|Y|+|X|(\bar{l}+1)-(k-r)\\
&=k-1 +k\bar{l}+r\\&=n-k.
\end{array}
$$
Thus $T'\in \mathbb{G}^k_{n,\alpha}$.
Since $T'$ is a subgraph of $T$, by Lemma \ref{lem-subgraph-radius} we have
$\rho(T^*)\le\rho(T')<\rho(T)<\sqrt{\bar{l}+5}$.

The proof is completed.
\end{proof}

By exchanging the position of $P_{2k-1}$ and $W_{2k+3}$ (see Fig.\ref{fig-W-D}) in the proof of Lemma \ref{lem-bound}, we get the following result.

\begin{lem}\label{cor-bound}
For given $1\le k=n-\alpha\le \frac{n}{2}$, let $T^*$ be the minimizer graph in $\mathbb{G}^k_{n,\alpha}$ and let $\bar{l}=\lfloor\frac{n-2k+1}{k}\rfloor$.
If $|L(T^*)|\le k\bar{l}+4$, then $\rho(T^*)\le \sqrt{\bar{l}+4}$.
\end{lem}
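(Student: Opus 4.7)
The plan is to adapt the proof of Lemma \ref{lem-bound} by replacing $P_{2k-1}$ with $W_{2k+3}$ and decreasing the pendant multiplicity from $\bar{l}+1$ to $\bar{l}$. The essential new ingredient is that $\rho(W_{2k+3})=2$: the graph $W_{2k+3}$ coincides (up to isomorphism) with the extended Dynkin diagram $\widetilde{D}_{2k+2}$, a Smith graph of spectral radius exactly $2$. This replaces the strict bound $\rho(P_{2k-1})<2$ used previously and converts the strict inequality of Lemma \ref{lem-bound} into the non-strict estimate $\rho(T^{*})\le\sqrt{\bar{l}+4}$.

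Concretely, take the bipartition $W_{2k+3}=(X,Y)$ with both degree-$3$ fork vertices placed in $X$; alternating along the central path yields $|X|=k$ and $|Y|=k+3$, the $Y$-side absorbing the four original leaves. Set
\[
T:=W_{2k+3}\circ \bar{l}\,\mathbf{1}_X.
\]
By Lemma \ref{lem-bipartite},
\[
\rho(T)=\sqrt{\rho^2(W_{2k+3})+\bar{l}}=\sqrt{\bar{l}+4}.
\]
A direct count gives $n(T)=2k+3+k\bar{l}$, $|L(T)|=k\bar{l}+4$, and $\alpha(T)=|Y|+\bar{l}|X|=k+3+k\bar{l}$.

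Using $|L(T^{*})|=n-2k+1$ from Proposition \ref{lem-main-tree}, the hypothesis $|L(T^{*})|\le k\bar{l}+4$ is equivalent to
\[
d:=n(T)-n=k\bar{l}+4-|L(T^{*})|\ge 0.
\]
Remove $d$ leaves from $T$ one at a time to obtain a subtree $T'$. A short K\"onig-type argument, noting that every $x\in X$ has degree at least two in $T$ so that swapping $x$ out of a vertex cover strictly increases its size, shows that $X$ is the unique minimum vertex cover of $T$; consequently the entire enlarged $Y$-part (which contains every leaf of $T$) is the unique maximum independent set of $T$, and each leaf removal lowers $\alpha$ by exactly one. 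Therefore $n(T')=n$ and $\alpha(T')=\alpha(T)-d=n-k$, so $T'\in\mathbb{G}^{k}_{n,\alpha}$. Lemma \ref{lem-subgraph-radius} together with the minimality of $T^{*}$ then yields
\[
\rho(T^{*})\le\rho(T')\le\rho(T)=\sqrt{\bar{l}+4}.
\]

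The main obstacle is the clean identification $\rho(W_{2k+3})=2$ via Smith's classification of connected graphs of spectral radius $2$ (equivalently, via direct exhibition of an eigenvector). A secondary bookkeeping point is that for small $k$ the graph $W_{2k+3}$ degenerates (for instance $W_5\cong K_{1,4}$, and $W_6$ has no interior path vertex), yet the bipartite counts $|X|=k$, $|Y|=k+3$ and the spectral equality $\rho=2$ persist, so the argument goes through uniformly for all $k\ge 1$.
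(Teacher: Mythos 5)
Your construction is exactly the one the paper uses: form $W=W_{2k+3}\circ \bar l\,\mathbf{1}_X$ with $|X|=k$, apply Lemma \ref{lem-bipartite} together with $\rho(W_{2k+3})=2$ to get $\rho(W)=\sqrt{\bar l+4}$, delete $k\bar l+4-|L(T^*)|$ leaves to land in $\mathbb{G}^k_{n,\alpha}$, and finish with Lemma \ref{lem-subgraph-radius} and the minimality of $T^*$. The one place you add detail is the claim that each leaf deletion lowers $\alpha$ by exactly one; your uniqueness-of-the-maximum-independent-set argument strictly covers only the first deletion (once all pendants at a fork vertex are removed that vertex has degree one and the minimum vertex cover is no longer unique), but the needed conclusion $\alpha(W')=n-k$ still holds because $X$ remains a vertex cover of $W'$ while $W'$ retains a matching of size $k$ --- a bookkeeping step the paper itself leaves implicit.
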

\begin{proof}
Let $W_{2k+3}=(X,Y)$ (see Fig.\ref{fig-W-D}) be a bipartite graph on $2k+3$ vertices such that $X$ does not contain any pendant vertices of $W_{2k+3}$.
Then $|X|=\lceil\frac{2k+3-4}{2}\rceil=k$.
Now we construct a tree $W=W_{2k+3}\circ \bar{l}\mathbf{1}_{X}$.
It's well known that
$\rho(W_{2k+3})=2$.
By Lemma \ref{lem-bipartite}, we have $$\rho(W)=\sqrt{\rho^2(W_{2k+3})+\bar{l}}=\sqrt{\bar{l}+4}.$$
Since $|L(T^*)|\le k\bar{l}+4$, from Proposition \ref{lem-main-tree} we have $$n(T^*)=n(T^*_{-})+|L(T^*)|\le 2k-1+k\bar{l}+4=n(W_{2k+3})+|X|\cdot  \bar{l}= n(W).$$
Then we can construct a subtree
  $W'$   by deleting $n(W)-n(T^*)$ leaves from $W$ such that $n(W')=n(T^*)=n$,
and so $$
\alpha(W')=\alpha(W)-(n(W)-n(T^*))=(n(W)-|X|)-(n(W)-n(T^*))=n-k.$$
Thus $W'\in \mathbb{G}^k_{n,\alpha}$.
Since $W'$ is a subtree of $W$, by Lemma \ref{lem-subgraph-radius} we obtain $\rho(T^*)\le\rho(W')\le \rho(W)\le\sqrt{\bar{l}+4}$.

The proof is completed.
\end{proof}

By Lemmas \ref{lem-bound} and \ref{cor-bound}, we can estimate the value of $l(u)$.

\begin{lem}\label{cor-s-range}
For given $1\le k=n-\alpha\le \frac{n}{2}$, let $T^*$ be the minimizer graph in $\mathbb{G}^k_{n,\alpha}$ and let $\bar{l}=\lfloor\frac{n-2k+1}{k}\rfloor$, $r=n-2k+1-\bar{l}k$.
For $u\in V^*_2$, we have
\begin{equation}\label{ll-eq-1}\bar{l}+r-2k+2-d_{T^*_-}(u) \le l(u)\le \bar{l}+4-d_{T^*_{-}}(u).\end{equation}
 Particularly, if  $0\le r\le 4$ then
\begin{equation}\label{ll-eq-2}\bar{l}+r-k+1-d_{T^*_-}(u)\le l(u)\le \bar{l}+3-d_{T^*_{-}}(u).\end{equation}
\end{lem}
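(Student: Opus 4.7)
The plan is to combine two ingredients: the spectral upper bounds already established in Lemmas \ref{lem-bound} and \ref{cor-bound}, which control each $l(u)$ from above, and the leaf-count identity of Proposition \ref{lem-main-tree}, which transfers uniform upper bounds on all the $l(v)$'s into a lower bound on any single $l(u)$.

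I would begin with the upper bound in (\ref{ll-eq-1}). Since the star $K_{1,d_{T^*}(u)}$ sits inside $T^*$ centered at $u$, Lemma \ref{lem-subgraph-radius} yields $\sqrt{d_{T^*}(u)}=\rho(K_{1,d_{T^*}(u)})\le \rho(T^*)$. Combining this with the strict inequality $\rho(T^*)<\sqrt{\bar{l}+5}$ supplied by Lemma \ref{lem-bound} gives $d_{T^*}(u)\le \bar{l}+4$. By Proposition \ref{eq-degree1}, $l(u)=d_{T^*}(u)-d_{T^*_-}(u)$, so $l(u)\le \bar{l}+4-d_{T^*_-}(u)$.

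For the lower bound in (\ref{ll-eq-1}), Proposition \ref{lem-main-tree} gives $\sum_{v\in V_2^*}l(v)=|L(T^*)|=k\bar{l}+r$, while the fact that $T^*_-$ is a bipartite tree with $|V_1^*|=k-1$ and every vertex of $V_1^*$ of degree $2$ (Proposition \ref{eq-degree1}) yields $\sum_{v\in V_2^*}d_{T^*_-}(v)=m(T^*_-)=2(k-1)$. Writing $l(u)=k\bar{l}+r-\sum_{v\ne u}l(v)$ and applying the upper bound just proved to each $v\in V_2^*\setminus\{u\}$ produces, after a one-line rearrangement, $l(u)\ge \bar{l}+r-2k+2-d_{T^*_-}(u)$.

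For the sharpened bounds in (\ref{ll-eq-2}), the hypothesis $0\le r\le 4$ lets me rewrite $|L(T^*)|=k\bar{l}+r\le k\bar{l}+4$ and then invoke Lemma \ref{cor-bound} to obtain $\rho(T^*)\le\sqrt{\bar{l}+4}$. Repeating the subgraph argument gives $d_{T^*}(u)\le \bar{l}+4$ at once, but improving this to $d_{T^*}(u)\le \bar{l}+3$ requires the \emph{strict} inequality $\sqrt{d_{T^*}(u)}<\rho(T^*)$, which holds whenever $K_{1,d_{T^*}(u)}$ is a proper subgraph of $T^*$. The only way properness fails is when $T^*=K_{1,n-1}$ (i.e., $k=1$), in which case the claimed bound reduces to a trivial numerical check. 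Re-running the averaging argument of the previous paragraph with $\bar{l}+4$ replaced by $\bar{l}+3$ then produces the sharpened lower bound $l(u)\ge \bar{l}+r-k+1-d_{T^*_-}(u)$. The main subtlety I anticipate is precisely this passage from the non-strict spectral bound of Lemma \ref{cor-bound} to the strict degree bound needed for (\ref{ll-eq-2}); everything else is careful bookkeeping of arithmetic and of the edge-count identity $\sum_{v\in V_2^*}d_{T^*_-}(v)=2(k-1)$ inherited from the bipartite main tree.
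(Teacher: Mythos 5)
Your proof is correct and follows essentially the same route as the paper: the star-subgraph spectral bound combined with Lemmas \ref{lem-bound} and \ref{cor-bound} gives the upper bound $d_{T^*}(u)\le\bar{l}+4$ (resp.\ $\bar{l}+3$), and the leaf-count identity $\sum_{v\in V_2^*}l(v)=k\bar{l}+r$ together with $\sum_{v\in V_2^*}d_{T^*_-}(v)=m(T^*_-)=2(k-1)$ converts it into the lower bound by the same averaging computation. Your explicit attention to the strictness of $\rho(K_{1,d_{T^*}(u)})<\rho(T^*)$ --- which is genuinely needed to pass from the non-strict bound of Lemma \ref{cor-bound} to $d_{T^*}(u)\le\bar{l}+3$, and fails only in the $k=1$ star case --- is a point the paper's proof uses silently, so this is a welcome extra precaution rather than a deviation.
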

\begin{proof}
By Lemma \ref{lem-bound}, we have $d_{T^*}(u)=\rho^2(K_{1,d_{T^*}(u)})< \rho^2(T^*)<\bar{l}+5$ and  for each $u \in V^*_2$,
\begin{equation}\label{ll-eq-3} l(u)=d_{T^*}(u)-d_{T^*_-}(u)\le \bar{l}+4-d_{T^*_-}(u).\end{equation}
On the other hand, note that  $|L(T^*)|=n-2k+1=k\bar{l}+r$, we  have
$$\begin{array}{ll}
 l(u)&=|L(T^*)|-\sum\limits_{u\not=v\in V^*_2}l(v)\\
 &\ge k\bar{l}+r-\sum\limits_{u\not=v\in V^*_2}(\bar{l}+4-d_{T^*_-}(v))\\
 &=k\bar{l}+r-(k-1)(\bar{l}+4)+\sum_{v\in V^*_2}d_{T^*_-}(v)-d_{T^*_-}(u)\\
 &=k\bar{l}+r-(k-1)(\bar{l}+4)+m(T^*_-)-d_{T^*_-}(u)\\
  &=k\bar{l}+r-(k-1)(\bar{l}+4)+2(k-1)-d_{T^*_-}(u)\\
 &=\bar{l}+r-2k+2-d_{T^*_-}(u)
\end{array}$$
which together with (\ref{ll-eq-3}) leads to (\ref{ll-eq-1}).

If  $0\le r\le 4$ then $|L(T^*)|\le k\bar{l}+4$. By Lemma \ref{cor-bound}, we have $d_{T^*}(u)=\rho^2(K_{1,d_{T^*}(u)})< \rho^2(T^*)\le\bar{l}+4$. Thus the corresponding  (\ref{ll-eq-3}) becomes \begin{equation}\label{eq-upper-1}
l(u)=d_{T^*}(u)-d_{T^*_-}(u)\le \bar{l}+3-d_{T^*_-}(u).\end{equation}
As the same arguments as above, from (\ref{eq-upper-1}) we get  (\ref{ll-eq-2}).

It completes the proof.
\end{proof}

According to Proposition \ref{lem-matching}(iii),  we have
\begin{equation}\label{eq-degree}
d_{T^*_-}(u)\le k-1 \ \mbox{ for $u\in V^*_2$}.\end{equation}
From (\ref{ll-eq-1}), (\ref{ll-eq}) and (\ref{eq-degree}) we have
\begin{equation}\label{eq-5-0}
\begin{array}{ll}
l(u)&\ge\bar{l}+r-2k+2-d_{T^*_-}(u)\\
&=\frac{n-2k+1-r}{k}+r-2k+2-d_{T^*_-}(u)\vspace{0.1cm}\\
&\ge \frac{n-2k+1-r}{k}+r-2k+2-(k-1)\vspace{0.1cm}\\
&=\frac{n-3k^2+k+ 1+(k-1)r}{k}.
\end{array}
\end{equation}
Now we define $n_0=3k^2-k-1-(k-1)r$
for a given $k\ge 1$
and
\begin{equation}\label{n-k}
\ell_{n,k}=\frac{n-3k^2+k+ 1+(k-1)r}{k}=\frac{n-n_0}{k}\ \mbox{  for $n\ge n_0$}.\end{equation}
Then we have $n_0+1=3k^2-k-kr+r$, and so $n_0+1\equiv r$ ( $\mbox{mod } k$ ).
Recall that $n+1\equiv r$ ( $\mbox{mod } k$ ), we have $n\equiv n_0$ ( $\mbox{mod } k$ ) and $\ell_{n,k}$ is an integer.
Based on the above   notions and symbols we give  a constructing theorem for the minimizer graph in $\mathbb{G}_{n,\alpha}^k$.
\begin{thm}\label{thm-minimizer-spectral-radius}
For  given  $1\le k=n-\alpha\le \frac{n}{2}$,
let  $n+1\equiv r$ ( $\mbox{mod } k$ ) and $n_0=3k^2-k-1-(k-1)r$, where $0\le r\le k-1$. For $n\ge n_0$,  $T^*$ is a  minimizer graph in $\mathbb{G}_{n,\alpha}^k$ if and only if there exists a minimizer graph $T_{n_0}\in \mathbb{G}_{n_0,n_0-k}^k$ such that $T^*=T_{n_0}\circ \ell_{n,k} \mathbf{1}_{V_2^*}$ and $\rho(T^*)=\sqrt{\rho^2(T_{n_0})+\ell_{n,k}}$, where $\ell_{n,k}$ is defined by (\ref{n-k}).
\end{thm}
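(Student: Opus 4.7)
My plan is to combine the main-tree decomposition from Theorem \ref{thm-minimizer-main} with the lower bound on pendant counts in Lemma \ref{cor-s-range} so as to ``peel off'' exactly $\ell_{n,k}$ pendants at every vertex of $V_2^*$, and then to transport minimality back and forth between the classes $\mathbb{G}_{n,\alpha}^k$ and $\mathbb{G}_{n_0,n_0-k}^k$ via the bipartite identity in Lemma \ref{lem-bipartite}.

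For the necessity direction, I would first take a minimizer $T^*\in \mathbb{G}_{n,\alpha}^k$ and write $T^*=T^*_{-}\circ l_{V_2^*}$ by Theorem \ref{thm-minimizer-main}. Inequality (\ref{eq-5-0}), together with the degree bound (\ref{eq-degree}), shows that every $u\in V_2^*$ satisfies $l(u)\ge \ell_{n,k}$, since this is precisely what (\ref{eq-5-0}) gives in the worst case $d_{T^*_{-}}(u)=k-1$. I would therefore set $l'(u):=l(u)-\ell_{n,k}\ge 0$ and $T_{n_0}:=T^*_{-}\circ l'_{V_2^*}$, so that $T^*=T_{n_0}\circ\ell_{n,k}\mathbf{1}_{V_2^*}$. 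Counting gives $n(T_{n_0})=n-k\ell_{n,k}=n_0$; since the peeled pendants all belonged to the maximum independent set $L(T^*)\cup V_1^*$, I get $\alpha(T_{n_0})=(n-k)-k\ell_{n,k}=n_0-k$; and $T_{n_0}$ is bipartite with $V_2^*$ as one side. Lemma \ref{lem-bipartite} then yields
\[
\rho(T^*)=\sqrt{\rho(T_{n_0})^2+\ell_{n,k}}.
\]

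To see that $T_{n_0}$ is itself a minimizer of $\mathbb{G}_{n_0,n_0-k}^k$, I would argue by contradiction: if $T'_{n_0}\in\mathbb{G}_{n_0,n_0-k}^k$ has strictly smaller spectral radius, Lemma \ref{thm-extremal-graph-tree} (whose hypothesis $n_0-k\ge \lceil n_0/2\rceil$ is easy to check from the formula $n_0=3k^2-k-1-(k-1)r$) lets us take $T'_{n_0}$ to be a tree, and Theorem \ref{thm-minimizer-main} furnishes a bipartition with a $V_2$-side of size $k$; attaching $\ell_{n,k}$ pendants at each vertex of that side produces $T':=T'_{n_0}\circ\ell_{n,k}\mathbf{1}_{V_2'}\in\mathbb{G}_{n,\alpha}^k$ by the same bookkeeping, and Lemma \ref{lem-bipartite} gives $\rho(T')^2=\rho(T'_{n_0})^2+\ell_{n,k}<\rho(T_{n_0})^2+\ell_{n,k}=\rho(T^*)^2$, contradicting the minimality of $T^*$. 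The sufficiency direction then comes for free: starting from any minimizer $T_{n_0}$ of $\mathbb{G}_{n_0,n_0-k}^k$, the construction $T^*:=T_{n_0}\circ\ell_{n,k}\mathbf{1}_{V_2^*}$ lies in $\mathbb{G}_{n,\alpha}^k$, has $\rho(T^*)^2=\rho(T_{n_0})^2+\ell_{n,k}$, and this value matches the global minimum of $\rho^2$ on $\mathbb{G}_{n,\alpha}^k$ supplied by the necessity direction, so $T^*$ is a minimizer.

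The main obstacle is establishing the pendant lower bound $l(u)\ge \ell_{n,k}$ uniformly over $u\in V_2^*$: the value $n_0=3k^2-k-1-(k-1)r$ is calibrated precisely so that (\ref{eq-5-0}) delivers this bound in the worst case $d_{T^*_{-}}(u)=k-1$, and the hypothesis $n\ge n_0$ is exactly what makes $\ell_{n,k}=(n-n_0)/k$ a non-negative integer. A secondary subtlety, arising in both directions, is verifying that the bipartite structure used in the construction really routes the appended pendants onto the opposite color class of $V_2^*$ and that the independence count jumps by exactly $k\ell_{n,k}$; both follow from the fact that a tree has a unique bipartition together with the fact that appended leaves can always be absorbed into a maximum independent set.
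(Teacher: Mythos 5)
Your proposal is correct and follows essentially the same route as the paper: decompose $T^*=T^*_-\circ l_{V_2^*}$ via Theorem \ref{thm-minimizer-main}, use the lower bound (\ref{eq-5-0})/(\ref{eq-degree}) (equivalently Lemma \ref{cor-s-range}) to peel off exactly $\ell_{n,k}$ pendants at each vertex of $V_2^*$, and transfer minimality in both directions through the identity of Lemma \ref{lem-bipartite}. Your explicit contradiction argument for why the peeled graph must itself be a minimizer of $\mathbb{G}_{n_0,n_0-k}^k$ merely spells out what the paper compresses into the remark that $\ell_{n,k}$ is a fixed quantity independent of the choice of kernel.
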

\begin{proof}
Let $T^*$ be a  minimizer graph in $\mathbb{G}_{n,\alpha}^k$.
By Theorem \ref{thm-minimizer-main},
we have
 $T^*=T^*_{-}\circ l_{V_2^*}$ for some $T^*_{-}\in \mathcal{T}^*_-(k)$ and leaf sequence $l_{V_2^*}=(l(u)\mid u\in V_2^*)$ satisfying  $\sum_{u\in V_2^*}l(u)=n-2k+1$, moreover $l(u)$ satisfies  (\ref{ll-eq-1}) or (\ref{ll-eq-2}) by  Lemma \ref{cor-s-range}.
 Since $n\ge n_0$ and $k\ge 1$, we see that $0\le \ell_{n,k}\le l(u)$ for $u\in V_2^*$. Thus we can construct a graph $G$ obtained from $T^*$ by deleting $\ell_{n,k}$ leaves at each $u\in V_2^*$, i.e., $G=T^*_{-}\circ (l(u)-\ell_{n,k}\mid u\in V_2^*)$.
Note that $|V_2^*|=k$, we have $n(G)=n-\ell_{n,k}|V_2^*| =n_0$ and $\alpha(G)=\alpha-\ell_{n,k}|V_2^*|=\alpha-(n-n_0)=n_0-k$.
Thus $G\in \mathbb{G}_{n_0,n_0-k}^k$. On the other hand,
 it is clear that $T^*=G\circ \ell_{n,k}\mathbf{1}_{V_2^*}$.  By  Lemma \ref{lem-bipartite}, we have
\begin{equation}\label{mm-eq-1}
\rho(T^*)=\sqrt{\rho^2(G)+\ell_{n,k}}.\end{equation}
Notice that  $\ell_{n,k}$ is a determined number related with $n$ and $k$ and independent with the choice of $G$, from (\ref{mm-eq-1}) we see that $G $ must be a minimizer graph in  $\mathbb{G}_{n_0,n_0-k}^k$. The necessity holds.

Conversely, if there exists a minimizer graph $T_{n_0}\in \mathbb{G}_{n_0,n_0-k}^k$  such that $T^*=T_{n_0}\circ \ell_{n,k} \mathbf{1}_{V_2^*}$, then $n(T^*)=n_0+\ell_{n,k}|V_2^*|=n$ and $\alpha(T^*)=\alpha(T_{n_0})+\ell_{n,k}|V_2^*|=n_0-k+n-n_0=n-k=\alpha$. Thus $T^*\in\mathbb{G}_{n,\alpha}^k$ and  $\rho(T^*)=\sqrt{\rho^2(T_{n_0})+\ell_{n,k}}$ by  Lemma \ref{lem-bipartite}.
Since $T_{n_0}$ is a minimizer graph in $\mathbb{G}_{n_0,n_0-k}^k$ and $\ell_{n,k}$ is a determined number, $T^*$ is a minimizer graph in $\mathbb{G}_{n,\alpha}^k$. The sufficiency holds.
\end{proof}
\begin{remark}
According to Theorem \ref{thm-minimizer-spectral-radius}, the minimizer graph $T_{n_0}\in \mathbb{G}_{n_0,n_0-k}^k$ can be used to construct the minimizer graph of $\mathbb{G}_{n,\alpha}^k$. We call $T_{n_0}$ the \emph{kernel} of minimizer graph $T^*\in \mathbb{G}_{n,\alpha}^k$ if $T^*=T_{n_0}\circ \ell_{n,k} \mathbf{1}_{V_2^*}$. Given  $k$, $n_0=3k^2-k-1-(k-1)r$ is a constant, we can simply found the kernel of minimizer graph  among $\mathbb{G}_{n_0,n_0-k}^k$ by computer. Therefore, Theorem \ref{thm-minimizer-spectral-radius} completely determines the minimizer graph in $\mathbb{G}_{n,\alpha}^k$ and its spectral radius.
\end{remark}

\section{Construction for the minimizer  graphs in $\mathbb{G}^k_{n,\alpha}$ for $1\le k \le 6$}
Theorem \ref{thm-minimizer-spectral-radius} completely characterizes  the minimizer graphs of $\mathbb{G}_{n,\alpha}^k$ for $1\le k\le \frac{n}{2}$.
As an application,  we  will use a consistent approach  to find   the minimizer graphs of $\mathbb{G}^k_{n,\alpha}$  for $k= 1,2,3,4,5,6$. The  results for $k=1,2,3,4$ were made  in \cite{Xu,Lou} by different methods  and other two are new.

Given $1\le k\le  \frac{n}{2}$,
let  $n+1\equiv r$ ( $\mbox{mod } k$ ) and  $n\ge n_0=3k^2-k-1-(k-1)r$, where $0\le r\le k-1$ and $n+1\equiv n_0+1\equiv r$ ( $\mbox{mod } k$ ).
By Theorem \ref{thm-minimizer-spectral-radius}, each minimizer graph $T^*\in\mathbb{G}_{n,\alpha}^k$ can be presented by $T^*=T_{n_0}\circ \ell_{n,k} \mathbf{1}_{V_2^*}$, where the kernel $T_{n_0}$ is a minimizer graph in  $ \mathbb{G}_{n_0,n_0-k}^k$.
By Theorem \ref{thm-minimizer-main},  $F^*=T_{n_0}$ can be presented by $F^*=F^*_{-}\circ l_{V_2^*}$, where $F^*_{-}=(V_1^*,V_2^*)\in \mathcal{T}^*_-(k)$ and $l_{V_2^*}=(l(u)\mid u\in V_2^*)$ satisfying $\sum_{u\in V_2^*}l(u)=n_0-2k+1$.
 In addition, it follows from Lemma \ref{cor-s-range}  that $l_{V_2^*}$ is also restricted by (\ref{ll-eq-1}) or (\ref{ll-eq-2}).
Note that $n_0$ is small for $k=1,2,3,4$ and $\mathcal{T}^*_-(1)=\{P_1\}$, $\mathcal{T}^*_-(2)=\{P_3\}$,
$\mathcal{T}^*_-(3)=\{P_5\}$, $\mathcal{T}^*_-(4)=\{F_1,F_2\}$ (see Fig.\ref{main-tree}).
By using computer, we can simply choose the  minimizer graphs in $\mathbb{G}_{n_0,n_0-k}^k$ by selecting proper $l_{V_2^*}=(l(u)\mid u\in V_2^*)$  and finally get these minimizer graphs $T_{n_0}$ and then obtain $T^*\in\mathbb{G}_{n,\alpha}^k$ for $k=1,2,3$ and $4$.

For instance, let $k=3$, we have $n+1\equiv r$ ( $\mbox{mod } 3$ ) for some $0\le r\le2$, and  $n\ge n_0=23-2r$.
If $r=0$ then $n_0=23$.
By Theorem \ref{thm-minimizer-main}, $T_{n_0}=F^*=F^*_{-}\circ l_{V_2^*}$ is  a minimizer graph in $\mathbb{G}_{23,20}^3$, where $F^*_{-}=(V_1^*,V_2^*)\in \mathcal{T}^*_-(3)=\{P_5\}$ and $l_{V_2^*}=(l(v_0), l(v_2), l(v_4))$ satisfying $l(v_0)+ l(v_2)+l(v_4)=18$.
 Additionally, $l_{V_2^*}$ is also restricted by (\ref{ll-eq-2}).
By searching the proper $l_{V_2^*}$, we find that the minimizer graph achieves at $l_{V_2^*}=(l(v_0), l(v_2), l(v_4))=(7,4,7)$, i.e.,  $F^*=P_5\circ(7,4,7)$ is the minimizer graph in $\mathbb{G}_{23,20}^3$.
Note that $\ell_{n,k}=\frac{n-23}{3}$  from (\ref{n-k}),  by Theorem \ref{thm-minimizer-spectral-radius} we obtain the minimizer graph
$$\begin{array}{ll}T^*&=T_{n_0}\circ \ell_{n,k}\mathbf{1}_{V_2^*}\vspace{0.1cm}\\&= F^*\circ (\frac{n-23}{3},\frac{n-23}{3},\frac{n-23}{3})\vspace{0.1cm}\\
&=P_5\circ(7+\frac{n-23}{3},4+\frac{n-23}{3},7+\frac{n-23}{3})\vspace{0.1cm}\\
&=P_5\circ(\frac{n-2}{3},\frac{n-11}{3},\frac{n-2}{3})
\end{array}$$
in $\mathbb{G}_{n,\alpha}^3$ if  $n+1\equiv 0$ ( $\mbox{mod } 3$ ) (see the fourth line in Tab.\ref{table-min-1-4}). Similarly we can find the minimizer graphs in $\mathbb{G}_{n,\alpha}^3$ for  $n+1\equiv 1,2 $ ( $\mbox{mod } 3$ ) (see the fifth, sixth lines in Tab.\ref{table-min-1-4}).

As the same as above we can find  all the minimizer graphs $T^*\in\mathbb{G}_{n,\alpha}^k$ along with their parameters ( including spectral radius ), which are listed in Tab.\ref{table-min-1-4} for $k=1,2,3,4$, respectively. They are just all the minimizer graphs obtained in  \cite{Xu,Lou}.

\begin{table}[H]
\footnotesize
\caption{\small  The  minimizer graphs \\\footnotesize ( $n+1\equiv r$ ( $\mbox{mod } k$ ) and  $n\ge n_0=3k^2-k-1-(k-1)r$ )}
\centering
\renewcommand{\arraystretch}{1.3}
\begin{tabular*}{14.7cm}{p{2pt}|p{3pt}|p{7pt}|p{95pt}|p{28pt}|p{19pt}|p{125pt}|p{40pt}}
\hline
  $k$&$r$&$n_0$& the kernel $T_{n_0}\in \mathbb{G}_{n_0,n_0-k}^k$ &$\rho^2(T_{n_0})$&$\ell_{n,k}$& the minimizer graph $T^*\in \mathbb{G}_{n,\alpha}^k$
  & $\rho(T^*)$\\\hline
 $1$ &$0$&$1$&$P_{1}$ &$0$&$n-1$ &$P_1\circ (n-1)=K_{1,n-1}$&$\sqrt{n-1}$\\\hline
 \multirow{2}*{$2$}&$0$&$9$&
$P_3\circ(3,3)$&$5$
&$\frac{n-9}{2}$&
$P_3\circ(\frac{n-3}{2},\frac{n-3}{2})$
&$\sqrt{\frac{n+1}{2}}$\\\cline{2-8}
&$1$&$8$&$P_3\circ(2,3)$&
$\frac{7+\sqrt{5}}{2}$& $\frac{n-8}{2}$& $P_3\circ(\frac{n-4}{2},\frac{n-2}{2})$ &$\sqrt{\frac{n-1+\sqrt{5}}{2}}$\\\hline
\multirow{3}*{$3$}&$0$&$23$&
$P_5\circ(7,4,7)$& $7+\sqrt{3}$&$\frac{n-23}{3}$&
$P_5\circ(\frac{n-2}{3},\frac{n-11}{3},\frac{n-2}{3})$ &$\sqrt{\frac{n-2+3\sqrt{3}}{3}}$\\\cline{2-8}
&$1$&$21$&$P_5\circ(6,4,6)$&$8$&$\frac{n-21}{3}$& $P_5\circ(\frac{n-3}{3},\frac{n-9}{3},\frac{n-3}{3})$ &$\sqrt{\frac{n+3}{3}}$\\\cline{2-8}
&$2$&$19$&$P_5\circ(5,4,5)$&$6+\sqrt{2}$&$\frac{n-19}{3}$&
$P_5\circ(\frac{n-4}{3},\frac{n-7}{3},\frac{n-4}{3})$ &$\sqrt{\frac{n-1+3\sqrt{2}}{3}}$\\\hline
\multirow{7}*{$4$}&\multirow{2}*{$0$}&\multirow{2}*{$43$}&
$F_1\circ(10,6,10,10)$& \multirow{2}*{$12$}&
\multirow{2}*{$\frac{n-43}{4}$}&
$F_1\circ(\frac{n-3}{4},\frac{n-19}{4},\frac{n-3}{4},\frac{n-3}{4})$
 &
\multirow{2}*{$\sqrt{\frac{n+5}{4}}$}\\
&&&
$F_2\circ(10,8,8,10)$&&&$F_2\circ(\frac{n-3}{4},\frac{n-11}{4},\frac{n-11}{4},\frac{n-3}{4})$ &
\\\cline{2-8}
 &$1$&$40$&$F_1\circ(9,6,9,9)$&
$\frac{19+\sqrt{13}}{2}$&$\frac{n-40}{4}$&
$F_1\circ(\frac{n-4}{4},\frac{n-16}{4},\frac{n-4}{4},\frac{n-4}{4})$ &$\sqrt{\frac{n-2+2\sqrt{13}}{4}}$\\\cline{2-8}
&\multirow{3}*{$2$}&\multirow{3}*{$37$}&
$F_2\circ(8,7,7,8)$&\multirow{3}*{$\frac{19+\sqrt{5}}{2}$}
&\multirow{3}*{$\frac{n-37}{4}$}&
$F_2\circ(\frac{n-5}{4},\frac{n-9}{4},\frac{n-9}{4},\frac{n-5}{4})$
 &\multirow{3}*{$\sqrt{\frac{n+1+2\sqrt{5}}{4}}$}\\
&&&$F_2\circ(9,6,7,8)$&&&$F_2\circ(\frac{n-1}{4},\frac{n-13}{4},\frac{n-9}{4},\frac{n-5}{4})$&\vspace{0.1cm}\\
&&&$F_2\circ(9,6,6,9)$&&&
$F_2\circ(\frac{n-1}{4},\frac{n-13}{4},\frac{n-13}{4},\frac{n-1}{4})$ &\\ \cline{2-8}
 &$3$&$34$&$F_1\circ(8,3,8,8)$&$\frac{15+\sqrt{21}}{2}$
 &$\frac{n-34}{4}$&$F_1\circ(\frac{n-2}{4},\frac{n-22}{4},\frac{n-2}{4},\frac{n-2}{4})$ &$\sqrt{\frac{n-4+2\sqrt{21}}{4}}$\\
\hline
\end{tabular*}\label{table-min-1-4}
\end{table}
\begin{remark}
From Tab.\ref{table-min-1-4}, one can see that for $k=4$ the minimizer graph  is not unique if  $n+1\equiv r$ ( $\mbox{mod } 4$ )  with $r=0$ and $2$.
\end{remark}

Let  $k=n-\alpha=5$. Here we determine the minimizer graph $T^*$ of $\mathbb{G}^5_{n,\alpha}$ in three steps.
{\flushleft\bf Step I.} In the step one, we will find main trees in $\mathcal{T}^*_-(5)$. Now (\ref{eq-t1}) becomes
$$\left\{\begin{array}{ll}d\in \{4,6,8\}\\
h\le \min\{ 4-\frac{d}{2}, \lfloor\frac{ d}{4}\rfloor \}\\
\sum^h_{s=1} |M_s|=4-\frac{d}{2}\\
\sigma^*:M_s\longrightarrow M_{s-1}\\
\ \ \mbox{ for }  1\le s \le h
\end{array}\right.\Rightarrow
\left\{\begin{array}{ll}d=4\\
h= 1\\
\sum^h_{s=1} |M_s|=2\\
\sigma^*:M_s\longrightarrow M_{s-1}\\
\ \ \mbox{ for }  1\le s \le h
\end{array}\right.,\
\left\{\begin{array}{ll}d=6\\
h= 1\\
\sum^h_{s=1} |M_s|=1\\
\sigma^*:M_s\longrightarrow M_{s-1}\\
\ \ \mbox{ for }  1\le s \le h
\end{array}\right.
\mbox{ or }
\left\{\begin{array}{ll}d=8\\
h=0\\
M_s=\emptyset
\end{array}\right.
$$
If the first occurs then $\sigma^*(x_{11}y_{11})=\sigma^*(x_{12}y_{12})=v_1v_2$, which  leads to a main tree $F^5_1=T(4; \{x_{11}y_{11}, x_{12}y_{12}\}; \sigma^*)$ (see Fig.\ref{fig-main-5});
if the second occurs then $\sigma^*(x_{11}y_{11})=v_1v_2$, which  produces a main tree $F^5_2=T(6; \{x_{11}y_{11}\}; \sigma^*)$ (see Fig.\ref{fig-main-5});  if the later occurs we obtain a main tree $F^5_3=T(8; \emptyset; \sigma^*)=P_9$ (see Fig.\ref{fig-main-5}). Thus
 $\mathcal{T}^*_-(5)=\{F^5_1,F^5_2,F^5_3\}$.
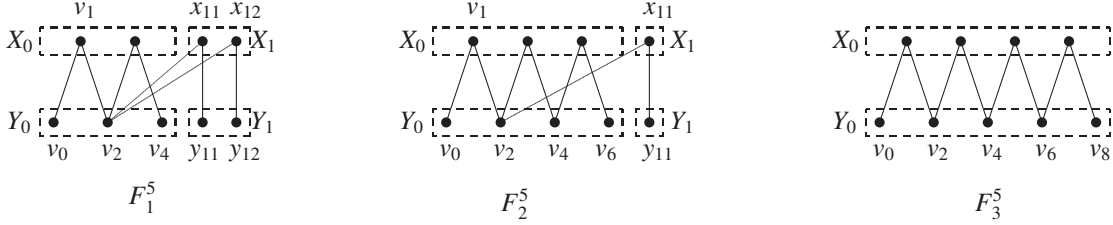
\begin{figure}[h]
  \centering
  \footnotesize

\unitlength 0.9mm 
\linethickness{0.4pt}
\ifx\plotpoint\undefined\newsavebox{\plotpoint}\fi 
\begin{picture}(162,29)(0,0)
\put(68,25){\circle*{1.5}}
\put(68,25){\line(1,-3){4}}
\put(72,13){\circle*{1.5}}
\put(64,13){\circle*{1.5}}
\put(68,25){\line(-1,-3){4}}
\put(76,25){\circle*{1.5}}
\put(76,25){\line(1,-3){4}}
\put(80,13){\circle*{1.5}}
\put(76,25){\line(-1,-3){4}}
\put(84,25){\circle*{1.5}}
\put(84,25){\line(1,-3){4}}
\put(88,13){\circle*{1.5}}
\put(84,25){\line(-1,-3){4}}
\put(94,25){\circle*{1.5}}
\put(94,13){\circle*{1.5}}
\put(94,25){\line(0,-1){12}}
\multiput(94,25)(-.0617977528,-.0337078652){356}{\line(-1,0){.0617977528}}
\put(57,24){$X_0$}
\put(57,12){$Y_0$}
\put(97,12){$Y_1$}
\put(97,24){$X_1$}
\put(72,0){$F^5_2$}
\put(93,29){$x_{11}$}
\put(93,8){$y_{11}$}
\put(63,8){$v_{0}$}
\put(67,29){$v_{1}$}
\put(71,8){$v_{2}$}
\put(79,8){$v_{4}$}
\put(86,8){$v_{6}$}
\put(10,25){\circle*{1.5}}
\put(10,25){\line(1,-3){4}}
\put(14,13){\circle*{1.5}}
\put(6,13){\circle*{1.5}}
\put(10,25){\line(-1,-3){4}}
\put(18,25){\circle*{1.5}}
\put(18,25){\line(1,-3){4}}
\put(22,13){\circle*{1.5}}
\put(18,25){\line(-1,-3){4}}
\put(28,25){\circle*{1.5}}
\put(28,13){\circle*{1.5}}
\put(28,25){\line(0,-1){12}}
\put(33,25){\circle*{1.5}}
\put(33,13){\circle*{1.5}}
\put(33,25){\line(0,-1){12}}
\multiput(28,25)(-.0393258427,-.0337078652){356}{\line(-1,0){.0393258427}}
\multiput(33,25)(-.0533707865,-.0337078652){356}{\line(-1,0){.0533707865}}
\put(-1,24){$X_0$}
\put(-1,12){$Y_0$}
\put(35,24){$X_1$}
\put(35,12){$Y_1$}
\put(17,1){$F^5_1$}
\put(26,29){$x_{11}$}
\put(32,29){$x_{12}$}
\put(26,8){$y_{11}$}
\put(32,8){$y_{12}$}
\put(5,8){$v_{0}$}
\put(9,29){$v_{1}$}
\put(13,8){$v_{2}$}
\put(20,8){$v_{4}$}
\put(132,25){\circle*{1.5}}
\put(132,25){\line(1,-3){4}}
\put(136,13){\circle*{1.5}}
\put(128,13){\circle*{1.5}}
\put(132,25){\line(-1,-3){4}}
\put(140,25){\circle*{1.5}}
\put(140,25){\line(1,-3){4}}
\put(144,13){\circle*{1.5}}
\put(140,25){\line(-1,-3){4}}
\put(148,25){\circle*{1.5}}
\put(148,25){\line(1,-3){4}}
\put(152,13){\circle*{1.5}}
\put(148,25){\line(-1,-3){4}}
\put(156,25){\circle*{1.5}}
\put(156,25){\line(1,-3){4}}
\put(160,13){\circle*{1.5}}
\put(156,25){\line(-1,-3){4}}
\put(127,8){$v_{0}$}
\put(135,8){$v_{2}$}
\put(143,8){$v_{4}$}
\put(151,8){$v_{6}$}
\put(159,8){$v_{8}$}
\put(121,24){$X_0$}
\put(121,12){$Y_0$}
\put(142,0){$F^5_3$}
\put(4,11){\dashbox{1}(20,4)[cc]{}}
\put(4,23){\dashbox{1}(20,4)[cc]{}}
\put(26,23){\dashbox{1}(9,4)[cc]{}}
\put(26,11){\dashbox{1}(9,4)[cc]{}}
\put(62,11){\dashbox{1}(28,4)[cc]{}}
\put(62,23){\dashbox{1}(28,4)[cc]{}}
\put(92,23){\dashbox{1}(4,4)[cc]{}}
\put(92,11){\dashbox{1}(4,4)[cc]{}}
\put(126,11){\dashbox{1}(36,4)[cc]{}}
\put(126,23){\dashbox{1}(36,4)[cc]{}}
\end{picture}

 \caption{\footnotesize The  main trees for $k=5$}\label{fig-main-5}
\end{figure}
{\flushleft\bf Step II. } In the step two, let $n+1\equiv r$ ( $\mbox{mod }5$ ) and $n\ge n_0=3k^2-k-1-(k-1)r=69-4r$, where $0\le r\le 4$, we will  determine the kernel $T_{n_0}$ of minimizer graph $T^*\in \mathbb{G}^5_{n,\alpha}$ according to the different $r$.
Since the kernel $T_{n_0}$ is also
 the minimizer graph in  $\mathbb{G}_{n_0,n_0-5}^5$, by Theorem \ref{thm-minimizer-main}
there exists some main tree $T_-^*=(V_1^*,V_2^*)\in \mathcal{T}^*_-(5)$ such that
 $T_{n_0}=T^*_{-}\circ l_{V_2^*}$ with leaf sequence $l_{V_2^*}=(l(u)\mid u\in V_2^*)$ satisfying $|L(T_{n_0})|=\sum_{u\in V_2^*}l(u)=n_0-9=60-4r$. It remains to find $l(u)$ of $T_{n_0}$ for $u\in V_2^*$.
 Let $\bar{l}=\lfloor\frac{|L(T_{n_0})|}{|V_2^*|}\rfloor=\lfloor\frac{60-4r}{5}\rfloor$.

First we suppose that $r=0$. Then $n_0=69$ and $\bar{l}=12$.
 From (\ref{ll-eq-2}),   $(l(u)\mid u\in V_2^*)$  satisfies
\begin{equation}\label{eq-sum-1}
\left\{\begin{array}{ll}\sum_{u\in V_2^*}l(u)=60\\
8-d_{T^*_-}(u)\le l(u)\le 15-d_{T^*_{-}}(u) \mbox{ for any $u\in V_2^*$}\end{array}\right.\end{equation}
where $d_{T^*_-}(u)$ can be determined by a main tree selected from the set $\mathcal{T}^*_-(5)=\{F^5_1,F^5_2,F^5_3\}$.

If $T^*_-=F^5_1=(V^*_1,V^*_2)$, from Fig.\ref{fig-main-5} we see that $V^*_2=\{v_0,v_2,v_4, y_{11}, y_{12}\}$ and $d_{F^5_1}(v_{0})=d_{F^5_1}(v_{4})=d_{F^5_1}(y_{11})=d_{F^5_1}(y_{12})=1$
and $d_{F^5_1}(v_{2})=4$.
Bringing these values into (\ref{eq-sum-1}), we get  $38$ solutions for $l_{V_2^*}=(l(u)\mid u\in V_2^*)$ which is collected in the set $\mathcal{L}_{F^5_1}$ and listed in Tab.\ref{tab-solution}.
By Theorem \ref{thm-minimizer-main}, $\mathbb{F}^5_1=\{F^5_1\circ l_{V_2^*}\mid l_{V_2^*}\in \mathcal{L}_{F^5_1}\} $ contains $38$ possible  minimizer graphs in $\mathbb{G}_{n_0,n_0-5}^5$.
Comparing their spectral radii, we obtain   $T^5_1=F^5_1\circ (13,8,13,13,13)$  with minimum spectral radius $\sqrt{13+\sqrt{5}}$ among $\mathbb{F}^5_1$  (see the first line in Tab.\ref{table-1}).

\begin{table}[H]
\footnotesize
\centering
\begin{tabular*}{14.6cm}{p{72pt}p{72pt}p{72pt}p{72pt}p{72pt}}
\hline
\multicolumn{5}{c}{$(\ l(v_0),\ l(v_2), \ l(v_4),\  l(y_{11}), \ l(y_{12})\ )$}\\ \hline
$(14,	4,14,	14,	14)$& $(13,5,	14,	14,	14)$&$(12,6,	14,	14,	14)$& $(13,6,	13,	14,	14)$& $(11,7,	14,	14,	14)$\\
 $(12,7,	13,	14,	14)$& $(13,7,	13,	13,	14)$& $(10,8,	14,	14,	14)$&  $(11, 8,	13,	14,	14)$& $(12,8,	12,	14,	14)$\\ $(12, 8,	13,	13,	14)$& \bm{$(13, 8,	13,	13,	13)$}&  $(9, 9,	14,	14,	14)$& $(10,9,	13,	14,	14)$&  $(11,9,	12,	14,	14)$\\ $(11,9,	13,	13,	14)$& $(12,9,	12,	13,	14)$& $(12,9,	13,	13,	13)$& $(8,10,	14,	14,	14)$ & $(9,10,	13,	14,	14)$\\ $(10,10,	12,	14,	14)$& $(10,10,	13,	13,	14)$ &$(11,10,	11,	14,	14)$&  $(11,10,	12,	13,	14)$& $(11,10,	13,	13,	13)$\\ $(12,10,	12,	12,	14)$& $(12,10,	12,	13,	13)$ & $(7,11,	14,	14,	14)$& $(8,11,	13,	14,	14)$&  $(9,11,	12,	14,	14)$\\ $(9,11,	13,	13,	14)$&  $(10,11,	11,	14,	14)$&  $(10,11,	12,	13,	14)$& $(10,11,	13,	13,	13)$ & $(11,11,	11,	13,	14)$\\ $(11,11,	12,	12,	14)$&  $(11,11,	12,	13,	13)$&  $(12,11,	12,	12,	13)$&&\\ \hline
\end{tabular*}
  \caption{\small The set $\mathcal{L}_{F^5_1}$}\label{tab-solution}
\end{table}

If $T^*_-=F^5_2=(V^*_1,V^*_2)$, from Fig.\ref{fig-main-5} we see that $V^*_2=\{v_0,v_2,v_4, v_6, y_{11}\}$ and  $d_{F^5_2}(v_{0})=d_{F^5_2}(v_{6})=d_{F^5_2}(y_{11})=1$,
$d_{F^5_2}(v_{2})=3$ and $d_{F^5_2}(v_{4})=2$. As similar as above,
 we get $200$ solutions of (\ref{eq-sum-1})  which is collected in $\mathcal{L}_{F^5_2}$.  Thus $\mathbb{F}^5_2=\{F^5_2\circ l_{V_2^*}\mid l_{V_2^*}\in \mathcal{L}_{F^5_2}\} $ contains $200$
possible  minimizer graphs, by comparing spectral radii  we get  $T^5_2=F^5_2\circ (13,10,11,13,13)$ with minimum spectral radius $3.9068$ among $\mathbb{F}^5_2$ (see the second line in Tab.\ref{table-1}).

If  $T^*_-=F^5_3=(V^*_1,V^*_2)$, from Fig.\ref{fig-main-5} we have $V^*_2=\{v_0,v_2,v_4, v_6, v_8\}$ and  $d_{F^5_3}(v_{0})=d_{F^5_3}(v_{8})=1$,
$d_{F^5_3}(v_{2})=d_{F^5_3}(v_{4})=d_{F^5_3}(v_{6})=2$.
Similarly, we can put 170 solutions of (\ref{eq-sum-1})  in $\mathcal{L}_{F^5_2}$ and find that  $T^5_3=F^5_3\circ (13,11,12,11,13)$ is the minimizer graph with spectral radius $\sqrt{\frac{27+\sqrt{13}}{2}}$ among $\mathbb{F}^5_3=\{F^5_3\circ l_{V_2^*}\mid l_{V_2^*}\in \mathcal{L}_{F^5_3}\}$ (see the third line in Tab.\ref{table-1}).

Finally, by comparing the spectral radii of $T^5_1$, $T^5_2$ and $T^5_3$,  we get
 $T_{n_0}=T^5_1=F^5_1\circ (13,8,13,13,13)$ is the minimizer graph in  $\mathbb{G}_{69,64}^5$ with respect to $r=0$.

Follow the same procedure as  $r=0$, we can obtain the minimizer graph $T_{n_0}$ in  $\mathbb{G}_{n_0,n_0-5}^5$ with $n_0=69-4r$ for $r=1,2,3$ and $4$, which are all listed in Tab.\ref{table-1}, respectively.
That is $T_{n_0}=F^5_1\circ(13,8,13,13,13)$, $F^5_3\circ(12,11,10,11,12)$ and $F^5_3\circ(12,9,10,9,12)$ if $r=0,1,2$ and $3$, respectively, and $T_{n_0}=F^5_1\circ(10,4,10,10,10)$, $F^5_2\circ(10,6,8,10,10)$ or $F^5_3\circ(10,8,8,8,10)$ if $r=4$.
\begin{table}[H]
\footnotesize
\caption{\small The  kernel  $T_{n_0}$ for $k=5$}
\centering
\renewcommand{\arraystretch}{1.25}
\begin{tabular*}{15.00cm}{p{2pt}|p{3pt}|p{6pt}|p{135pt}|p{13pt}|p{13pt}|p{120pt}|p{35pt}}
\hline
$k$&  $r$& $n_0$& the condition (\ref{eq-sum-1})
  & $T^*_-$ &  \#
  & graph $T^5_i$   & $\rho(T^5_i)$ \\ \hline
\multirow{15}*{5}&\multirow{3}*{0}&\multirow{3}*{$69$}&
 \multirow{3}*{$\left\{\begin{array}{ll}\sum_{u\in V_2^*}l(u)=60\vspace{0.08cm}\\8-d_{T^*_-}(u)\le l(u)\le 15-d_{T^*_{-}}(u)
 \end{array}\right.$}&
 $F^5_1$&$38$&
\bm{$F^5_1\circ(13,8,13,13,13)=T_{n_0}$}& \bm{$\!\!\sqrt{13+\!\sqrt{5}}$}\vspace{0.15cm}\\ \cline{5-8}
&&&&$F^5_2$&$200$&$F^5_2\circ(13,10,11,13,13)$& $3.9068$ \vspace{0.15cm}\\ \cline{5-8}
&&&&$F^5_3$&$170$&$F^5_3\circ(13,11,12,11,13)$& $\sqrt{\frac{27+\sqrt{13}}{2}}$\vspace{0.08cm} \\
\cline{2-8}
& \multirow{3}*{1}&\multirow{3}*{$65$}&
 \multirow{3}*{$\left\{\begin{array}{ll}\sum_{u\in V_2^*}l(u)=56\vspace{0.08cm}\\8-d_{T^*_-}(u)\le l(u)\le 14-d_{T^*_{-}}(u)
 \end{array}\right.$}&
 $F^5_1$&$27$&$F^5_1\circ(12,8,12,12,12)$& $\sqrt{\frac{25+\sqrt{17}}{2}}$\vspace{0.08cm}\\ \cline{5-8}
&&&&$F^5_2$&$130$&$F^5_2\circ(12,9,11,12,12)$& $3.8090$ \vspace{0.15cm}\\ \cline{5-8}
&&&&$F^5_3$&$110$&\bm{$F^5_3\circ(12,11,10,11,12)=T_{n_0}$}& \bm{$3.8054$}\vspace{0.15cm} \\
\cline{2-8}
&\multirow{3}*{2}&\multirow{3}*{$61$}&
 \multirow{3}*{$\left\{\begin{array}{ll}\sum_{u\in V_2^*}l(u)=52\vspace{0.08cm}\\8-d_{T^*_-}(u)\le l(u)\le 13-d_{T^*_{-}}(u)
 \end{array}\right.$}&
 $F^5_1$&$18$&$F^5_1\circ(12,4,12,12,12)$& $\sqrt{\frac{21+\sqrt{41}}{2}}$ \\ \cline{5-8}
&&&&$F^5_2$&$80$&$F^5_2\circ(12,7,9,12,12)$& $3.7003$ \vspace{0.15cm}\\ \cline{5-8}
&&&&$F^5_3$&$66$&\bm{$F^5_3\circ(12,9,10,9,12)=T_{n_0}$}& \bm{$3.6980$}\vspace{0.15cm} \\
\cline{2-8}
&\multirow{3}*{3}&\multirow{3}*{$57$}&
 \multirow{3}*{$\left\{\begin{array}{ll}\sum_{u\in V_2^*}l(u)=48\vspace{0.08cm}\\8-d_{T^*_-}(u)\le l(u)\le 12-d_{T^*_{-}}(u)
 \end{array}\right.$}&$F^5_1$&$12$&\bm{$F^5_1\circ(11,4,11,11,11)=T_{n_0}$}& \bm{$\!\!\sqrt{10+\!\sqrt{8}}$}\vspace{0.1cm}\\ \cline{5-8}
&&&&$F^5_2$&$46$&$F^5_2\circ(11,6,9,11,11)$& $3.5845$ \vspace{0.15cm}\\ \cline{5-8}
&&&&$F^5_3$&$38$&$F^5_3\circ(11,9,8,9,11)$& $3.5820$\vspace{0.15cm} \\
\cline{2-8}
&\multirow{3}*{4}&\multirow{3}*{$53$}&
 \multirow{3}*{$\left\{\begin{array}{ll}\sum_{u\in V_2^*}l(u)=44\vspace{0.08cm}\\8-d_{T^*_-}(u)\le l(u)\le 11-d_{T^*_{-}}(u)
 \end{array}\right.$}&$F^5_1$&$7$&\bm{$F^5_1\circ(10,4,10,10,10)=T_{n_0}$}& \bm{$2\sqrt{3}$} \vspace{0.15cm}\\ \cline{5-8}
&&&&$F^5_2$&$24$&\bm{$F^5_2\circ(10,6,8,10,10)=T_{n_0}$}& \bm{$2\sqrt{3}$} \vspace{0.15cm}\\ \cline{5-8}
&&&&$F^5_3$&$19$&\bm{$F^5_3\circ(10,8,8,8,10)=T_{n_0}$}& \bm{$2\sqrt{3}$}\vspace{0.15cm} \\
\hline
\multicolumn{8}{l}{\begin{tabular}{@{}l@{}}\# indicates the number of the solutions of $l_{V_2^*}=(l(u)\mid u\in V_2^*)$  satisfying the condition (\ref{eq-sum-1}) in the fourth
\\ column for given $T^*_-\in \mathcal{T}^*_-(5)$.\end{tabular} }
\end{tabular*}\label{table-1}
\end{table}

{\flushleft\bf Step III. } In the step three, we will determine the minimizer graph $T^*$ in $\mathbb{G}^5_{n,\alpha}$ for any
$n\ge n_0=69-4r$ with $0\le r\le 4$.
By Theorem \ref{thm-minimizer-spectral-radius} we have $T^*=T_{n_0}\circ \ell_{n,5} \mathbf{1}_{V_2^*}$ and $\rho(T^*)=\sqrt{\rho^2(T_{n_0})+\ell_{n,5}}$, where $\ell_{n,5}=\frac{n-n_0}{5}=\frac{n-(69-4r)}{5}$.
For $r=0$, we know  $T_{n_0}=F^5_1\circ(13,8,13,13,13)$ and
$\rho^2(T_{n_0})=13+\sqrt{5}$.
Note that $\ell_{n,5}=\frac{n-69}{5}$,
 we have
$$\begin{array}{ll}T^*&=T_{n_0}\circ \ell_{n,5} \mathbf{1}_{V_2^*}=T_{n_0}\circ \frac{n-69}{5} \mathbf{1}_{V_2^*}\vspace{0.2cm}\\
&=F^5_1\circ(13+\frac{n-69}{5} ,8+\frac{n-69}{5} ,13+\frac{n-69}{5} ,13+\frac{n-69}{5} ,13+\frac{n-69}{5} )\vspace{0.2cm}\\&=
F^5_1 \circ(\frac{n-4}{5},\frac{n-29}{5},\frac{n-4}{5},\frac{n-4}{5},\frac{n-4}{5})=T^*_{n,0}\ \ \mbox{(see Fig.\ref{111})}
\end{array}$$
 is the  minimizer graph in $\mathbb{G}_{n,n-5}^5$ with $\rho(T^*)=\sqrt{13+\sqrt{5}+\ell_{n,5}}=\sqrt{\frac{n-4}{5}+\sqrt{5}}$.

As in the case of $r=0$, we can obtain the minimizer graph $T^*=T^*_{n,r}$ along with the spectral radius for $r=1,2,3$ and $4$, respectively, which are shown in
Fig.\ref{111} and omit the specific calculations. Finally we summarize these results in the following Theorem \ref{thm-n-5}.

\begin{thm}\label{thm-n-5}
Let $T^*$ be the  minimizer  graph  in $\mathbb{G}^5_{n,\alpha}$ and let $n+1\equiv r$ ( $\mbox{mod }5$ ), where $0\le r \le 4$. For $n\ge 69-4r$,  we have
$$T^*=\left\{
\begin{array}{ll}
T^*_{n,0}=F^5_1 \circ(\frac{n-4}{5},\frac{n-29}{5},\frac{n-4}{5},\frac{n-4}{5},\frac{n-4}{5})& \mbox {if $r=0$,}\vspace{0.15cm}\\
T^*_{n,1}=F^5_3 \circ (\frac{n-5}{5},\frac{n-10}{5},\frac{n-15}{5},\frac{n-10}{5},\frac{n-5}{5})& \mbox {if $r=1$,}\vspace{0.15cm}\\
T^*_{n,2}=F^5_3  \circ (\frac{n-1}{5},\frac{n-16}{5},\frac{n-11}{5},\frac{n-16}{5},\frac{n-1}{5})& \mbox {if $r=2$,}\vspace{0.15cm}\\
T^*_{n,3}=F^5_1 \circ (\frac{n-2}{5},\frac{n-37}{5},\frac{n-2}{5},\frac{n-2}{5},\frac{n-2}{5})& \mbox {if $r=3$,}\vspace{0.15cm}\\
T^*_{n,4}=\left\{\begin{array}{ll}F^5_1\circ(\frac{n-3}{5},\frac{n-33}{5},\frac{n-3}{5},\frac{n-3}{5},\frac{n-3}{5})
\vspace{0.15cm}\\
F^5_2 \circ (\frac{n-3}{5},\frac{n-23}{5},\frac{n-13}{5},\frac{n-3}{5},\frac{n-3}{5})\vspace{0.15cm}\\
F^5_3 \circ (\frac{n-3}{5},\frac{n-13}{5},\frac{n-13}{5},\frac{n-13}{5},\frac{n-3}{5})
\end{array}\right. & \mbox {if $r= 4$,}\\
\end{array}\right.$$
where minimizer graphs $T^*_{n,0}$, $T^*_{n,1}$,..., $T^*_{n,4}$ are described in Fig.\ref{111}.
Moreover, the spectral radius of $T^*$ is $\rho(T^*_{n,0})=\sqrt{\frac{n-4}{5}+\sqrt{5}}$, $\rho(T^*_{n,1})=\sqrt{\frac{n-5}{5}+2.4812}$, $\rho(T^*_{n,2})=\sqrt{\frac{n-6}{5}+2.6751}$, $\rho(T^*_{n,3})=\sqrt{\frac{n-7}{5}+\sqrt{8}}$ and  $\rho(T^*_{n,4})=\sqrt{\frac{n+7}{5}}$, respectively.
\end{thm}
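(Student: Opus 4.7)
The plan is to execute the three-step strategy suggested by Theorem \ref{thm-minimizer-spectral-radius} specialized to $k=5$. First I would enumerate $\mathcal{T}^*_-(5)$ by plugging $k=5$ into system~(\ref{eq-t1}); the constraints $d\in\{4,6,8\}$, $h\le\min\{4-d/2,\lfloor d/4\rfloor\}$ and $\sum|M_s|=4-d/2$ leave exactly three compatible triples $(d,h,\sum|M_s|)$, namely $(4,1,2)$, $(6,1,1)$ and $(8,0,0)$. For the first, the map $\sigma^\ast$ must send both edges of $M_1$ to $v_1v_2$, yielding $F^5_1$; for the second $\sigma^\ast$ sends the unique edge of $M_1$ to $v_1v_2$, yielding $F^5_2$; the third forces the path $P_9=F^5_3$. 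Thus $\mathcal{T}^*_-(5)=\{F^5_1,F^5_2,F^5_3\}$ as in Fig.~\ref{fig-main-5}.

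Second, fix $r\in\{0,1,2,3,4\}$ and set $n_0=69-4r$ so that $n_0+1\equiv r\pmod{5}$. By Theorem~\ref{thm-minimizer-main} the kernel $T_{n_0}\in\mathbb{G}_{n_0,n_0-5}^5$ has the form $T^*_-\circ l_{V_2^*}$ for some $T^*_-\in\mathcal{T}^*_-(5)$ and a leaf sequence $l_{V_2^*}=(l(u)\mid u\in V_2^*)$. Lemma~\ref{cor-s-range} (inequality (\ref{ll-eq-2})) combined with $\sum_u l(u)=n_0-9=60-4r$ furnishes a finite system analogous to (\ref{eq-sum-1}) that $l_{V_2^*}$ must satisfy. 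For each of the three choices of $T^*_-$ the degree sequence $d_{T^*_-}(u)$ is read from Fig.~\ref{fig-main-5}, substituted into the bounds, and the resulting integer system enumerated (the counts of feasible sequences per case are recorded in the column ``\#'' of Tab.~\ref{table-1}). For each feasible sequence one computes $\rho(T^*_-\circ l_{V_2^*})$ and keeps the minimizer over all three trees; the result is precisely the kernel $T_{n_0}$ listed in boldface in Tab.~\ref{table-1}. Note that for $r=4$ the minimum is attained simultaneously by three different configurations, which is exactly why the conclusion of the theorem splits into three cases in that row.

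Third, for general $n\ge n_0$ with $n+1\equiv r\pmod 5$ I would invoke Theorem~\ref{thm-minimizer-spectral-radius}: $T^*=T_{n_0}\circ \ell_{n,5}\mathbf{1}_{V_2^*}$ and $\rho(T^*)=\sqrt{\rho^2(T_{n_0})+\ell_{n,5}}$ with $\ell_{n,5}=(n-n_0)/5=(n-69+4r)/5$. Adding $\ell_{n,5}$ to every entry of the kernel's leaf sequence rewrites the coordinates in closed form in $n$; for example, with $r=0$ one gets $13+\ell_{n,5}=(n-4)/5$ and $8+\ell_{n,5}=(n-29)/5$, reproducing $T^*_{n,0}$. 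The spectral radii follow by plugging in the numerical values $\rho^2(T_{n_0})\in\{13+\sqrt{5},2.4812+4,2.6751+4,10+\sqrt{8},8+4\}$ read off Tab.~\ref{table-1}.

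The main obstacle is the bookkeeping in the second step: to be rigorous, one must justify that the finite enumeration exhausts all candidates and that computing spectral radii on the (small but not tiny) resulting families genuinely picks out a global minimum. Two conceptual aids trim the work considerably: Lemma~\ref{cor-s-range} forces every coordinate of $l_{V_2^*}$ into a narrow interval of width roughly $\bar l$, reducing each case to a bounded integer program on five variables; and Lemma~\ref{lem-bipartite} provides closed-form spectral radii for many of the candidates of the form $G\circ l\mathbf{1}_X$, which permits analytic comparisons when several sequences are close competitors (notably for $r=4$, where one verifies algebraically that the three winners all yield $\rho^2=4+4=12$ via Lemma~\ref{lem-bipartite} applied to appropriate bipartitions). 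The verification of minimality across the three main trees — e.g.\ comparing $\sqrt{13+\sqrt 5}\approx 3.9029$ against $3.9068$ and $\sqrt{(27+\sqrt{13})/2}\approx 3.9041$ for $r=0$ — is then a routine numerical check.
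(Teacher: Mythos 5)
Your proposal is correct and follows the paper's proof essentially verbatim: the same three-step scheme (enumerate $\mathcal{T}^*_-(5)=\{F^5_1,F^5_2,F^5_3\}$ from (\ref{eq-t1}), determine the kernel $T_{n_0}$ by the finite search over leaf sequences constrained by Lemma \ref{cor-s-range} and recorded in Tab.\ref{table-1}, then lift to general $n$ via Theorem \ref{thm-minimizer-spectral-radius} and Lemma \ref{lem-bipartite}). The only blemishes are arithmetic slips in your parenthetical decompositions of $\rho^2(T_{n_0})$ --- for $r=1,2$ the values are $2.4812+12$ and $2.6751+11$ rather than ``$+4$'', and for $r=4$ the identity is $\rho^2=\rho^2(W_{13})+\bar l=4+8=12$ rather than ``$4+4$'' --- none of which affects the argument.
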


\begin{figure}[h]
  \centering
  \footnotesize
\unitlength 0.88mm 
\linethickness{0.4pt}
\ifx\plotpoint\undefined\newsavebox{\plotpoint}\fi 
\begin{picture}(156.75,77.75)(0,0)
\put(14,48){\bm{$T^*_{n,0}=F^5_1 \circ(\frac{n-4}{5},\frac{n-29}{5},\frac{n-4}{5},\frac{n-4}{5},\frac{n-4}{5})$}}
\put(12,42){( $T^*_{n,3}=F^5_1 \circ(\frac{n-2}{5},\frac{n-37}{5},\frac{n-2}{5},\frac{n-2}{5},\frac{n-2}{5})$ )}
\multiput(21,77)(-.033653846,-.048076923){208}{\line(0,-1){.048076923}}
\multiput(21,77)(.033653846,-.048076923){208}{\line(0,-1){.048076923}}
\put(21,77){\circle*{1.5}}
\put(28,67){\circle*{1.5}}
\put(14,67){\circle*{1.5}}
\put(14,67){\line(-3,-5){3}}
\put(14,67){\line(3,-5){3}}
\put(11,62){\circle*{1.5}}
\put(17,62){\circle*{1.5}}
\put(13,61){$\cdots$}
\put(9,61){$\underbrace{}_{\bm{\frac{n-4}{5}}(\frac{n-2}{5})}$}
\put(15,66){$v_{0}$}
\put(28,67){\line(-3,-5){3}}
\put(28,67){\line(3,-5){3}}
\put(25,62){\circle*{1.5}}
\put(31,62){\circle*{1.5}}
\put(27,61){$\cdots$}
\put(22,61){$\underbrace{}_{\bm{\frac{n-29}{5}}(\frac{n-37}{5})}$}
\put(29,66){$v_{2}$}
\multiput(35,77)(-.033653846,-.048076923){208}{\line(0,-1){.048076923}}
\multiput(35,77)(.033653846,-.048076923){208}{\line(0,-1){.048076923}}
\put(35,77){\circle*{1.5}}
\put(42,67){\circle*{1.5}}
\put(42,67){\line(-3,-5){3}}
\put(42,67){\line(3,-5){3}}
\put(39,62){\circle*{1.5}}
\put(45,62){\circle*{1.5}}
\put(41,61){$\cdots$}
\put(37,61){$\underbrace{}_{\bm{\frac{n-4}{5}}(\frac{n-2}{5})}$}
\put(43,66){$v_{4}$}
\put(56,67){\circle*{1.5}}
\put(56,66){\line(0,1){11}}
\put(56,77){\circle*{1.5}}
\put(70,67){\circle*{1.5}}
\put(70,66){\line(0,1){11}}
\put(70,77){\circle*{1.5}}
\multiput(56,77)(-.0942760943,-.0336700337){297}{\line(-1,0){.0942760943}}
\multiput(70,77)(-.1414141414,-.0336700337){297}{\line(-1,0){.1414141414}}
\put(56,67){\line(-3,-5){3}}
\put(56,67){\line(3,-5){3}}
\put(53,62){\circle*{1.5}}
\put(59,62){\circle*{1.5}}
\put(55,61){$\cdots$}
\put(51,61){$\underbrace{}_{\bm{\frac{n-4}{5}}(\frac{n-2}{5})}$}
\put(57,66){$y_{11}$}
\put(70,67){\line(-3,-5){3}}
\put(70,67){\line(3,-5){3}}
\put(67,62){\circle*{1.5}}
\put(73,62){\circle*{1.5}}
\put(69,61){$\cdots$}
\put(65,61){$\underbrace{}_{\bm{\frac{n-4}{5}}(\frac{n-2}{5})}$}
\put(71,66){$y_{12}$}
\put(95,48){\bm{$T^*_{n,1}=F^5_3 \circ (\frac{n-5}{5},\frac{n-10}{5},\frac{n-15}{5},\frac{n-10}{5},\frac{n-5}{5})$}}
\put(93,42){( $T^*_{n,2}=F^5_3 \circ (\frac{n-1}{5},\frac{n-16}{5},\frac{n-11}{5},\frac{n-16}{5},\frac{n-1}{5})$ )}
\multiput(104,77)(-.033653846,-.048076923){208}{\line(0,-1){.048076923}}
\multiput(104,77)(.033653846,-.048076923){208}{\line(0,-1){.048076923}}
\put(104,77){\circle*{1.5}}
\put(111,67){\circle*{1.5}}
\put(97,67){\circle*{1.5}}
\put(97,67){\line(-3,-5){3}}
\put(97,67){\line(3,-5){3}}
\put(94,62){\circle*{1.5}}
\put(100,62){\circle*{1.5}}
\put(96,61){$\cdots$}
\put(92,61){$\underbrace{}_{\bm{\frac{n-5}{5}}(\frac{n-1}{5})}$}
\put(98,66){$v_{0}$}
\put(111,67){\line(-3,-5){3}}
\put(111,67){\line(3,-5){3}}
\put(108,62){\circle*{1.5}}
\put(114,62){\circle*{1.5}}
\put(110,61){$\cdots$}
\put(105,61){$\underbrace{}_{\bm{\frac{n-10}{5}}(\frac{n-16}{5})}$}
\put(112,66){$v_{2}$}
\multiput(118,77)(-.033653846,-.048076923){208}{\line(0,-1){.048076923}}
\multiput(118,77)(.033653846,-.048076923){208}{\line(0,-1){.048076923}}
\put(118,77){\circle*{1.5}}
\put(125,67){\circle*{1.5}}
\put(125,67){\line(-3,-5){3}}
\put(125,67){\line(3,-5){3}}
\put(122,62){\circle*{1.5}}
\put(128,62){\circle*{1.5}}
\put(124,61){$\cdots$}
\put(119,61){$\underbrace{}_{\bm{\frac{n-15}{5}}(\frac{n-11}{5})}$}
\put(126,66){$v_{4}$}
\multiput(132,77)(-.033653846,-.048076923){208}{\line(0,-1){.048076923}}
\multiput(132,77)(.033653846,-.048076923){208}{\line(0,-1){.048076923}}
\put(132,77){\circle*{1.5}}
\put(139,67){\circle*{1.5}}
\put(139,67){\line(-3,-5){3}}
\put(139,67){\line(3,-5){3}}
\put(136,62){\circle*{1.5}}
\put(142,62){\circle*{1.5}}
\put(138,61){$\cdots$}
\put(133,61){$\underbrace{}_{\bm{\frac{n-10}{5}}(\frac{n-16}{5})}$}
\put(140,66){$v_{6}$}
\multiput(146,77)(-.033653846,-.048076923){208}{\line(0,-1){.048076923}}
\multiput(146,77)(.033653846,-.048076923){208}{\line(0,-1){.048076923}}
\put(146,77){\circle*{1.5}}
\put(153,67){\circle*{1.5}}
\put(153,67){\line(-3,-5){3}}
\put(153,67){\line(3,-5){3}}
\put(150,62){\circle*{1.5}}
\put(156,62){\circle*{1.5}}
\put(152,61){$\cdots$}
\put(148,61){$\underbrace{}_{\bm{\frac{n-5}{5}}(\frac{n-1}{5})}$}
\put(154,66){$v_{8}$}
\put(151,0){\line(0,1){0}}
\put(9,34){\circle*{1.5}}
\multiput(9,34)(.033557047,-.060402685){149}{\line(0,-1){.060402685}}
\multiput(9,34)(-.033557047,-.060402685){149}{\line(0,-1){.060402685}}
\put(19,34){\circle*{1.5}}
\multiput(19,34)(.033557047,-.060402685){149}{\line(0,-1){.060402685}}
\multiput(19,34)(-.033557047,-.060402685){149}{\line(0,-1){.060402685}}
\put(4,25){\circle*{1.5}}
\put(14,25){\circle*{1.5}}
\put(24,25){\circle*{1.5}}
\put(4,25){\line(-3,-5){3}}
\put(4,25){\line(3,-5){3}}
\put(1,20){\circle*{1.5}}
\put(7,20){\circle*{1.5}}
\put(14,25){\line(-3,-5){3}}
\put(14,25){\line(3,-5){3}}
\put(11,20){\circle*{1.5}}
\put(17,20){\circle*{1.5}}
\put(24,25){\line(-3,-5){3}}
\put(24,25){\line(3,-5){3}}
\put(21,20){\circle*{1.5}}
\put(27,20){\circle*{1.5}}
\put(33,25){\line(-3,-5){3}}
\put(33,25){\line(3,-5){3}}
\put(30,20){\circle*{1.5}}
\put(36,20){\circle*{1.5}}
\put(42,25){\line(-3,-5){3}}
\put(42,25){\line(3,-5){3}}
\put(39,20){\circle*{1.5}}
\put(45,20){\circle*{1.5}}
\put(33,34){\circle*{1.5}}
\put(33,34){\line(0,-1){9}}
\put(33,25){\circle*{1.5}}
\put(42,34){\circle*{1.5}}
\put(42,34){\line(0,-1){9}}
\put(42,25){\circle*{1.5}}
\put(3,19){$\cdots$}
\put(0,19){$\underbrace{}_{\frac{n-3}{5}}$}
\put(10,19){$\underbrace{}_{\frac{n-33}{5}}$}
\put(20,19){$\underbrace{}_{\frac{n-3}{5}}$}
\put(29,19){$\underbrace{}_{\frac{n-3}{5}}$}
\put(38,19){$\underbrace{}_{\frac{n-3}{5}}$}
\put(13,19){$\cdots$}
\put(23,19){$\cdots$}
\put(32,19){$\cdots$}
\put(41,19){$\cdots$}
\multiput(33,34)(-.0711610487,-.0337078652){267}{\line(-1,0){.0711610487}}
\multiput(42,34)(-.1048689139,-.0337078652){267}{\line(-1,0){.1048689139}}
\put(5,24){$v_{0}$}
\put(15,24){$v_{2}$}
\put(25,24){$v_{4}$}
\put(34,24){$y_{11}$}
\put(43,24){$y_{12}$}
\put(-3,6){$T^*_{n,4}\!\!=\!\!F^5_1\!\circ\!(\frac{n-3}{5},\!\frac{n-33}{5},\!\frac{n-3}{5},\!\frac{n-3}{5},\!\frac{n-3}{5})$}
\put(120,34){\circle*{1.5}}
\multiput(120,34)(.033557047,-.060402685){149}{\line(0,-1){.060402685}}
\multiput(120,34)(-.033557047,-.060402685){149}{\line(0,-1){.060402685}}
\put(130,34){\circle*{1.5}}
\multiput(130,34)(.033557047,-.060402685){149}{\line(0,-1){.060402685}}
\multiput(130,34)(-.033557047,-.060402685){149}{\line(0,-1){.060402685}}
\put(115,25){\circle*{1.5}}
\put(125,25){\circle*{1.5}}
\put(135,25){\circle*{1.5}}
\put(115,25){\line(-3,-5){3}}
\put(115,25){\line(3,-5){3}}
\put(112,20){\circle*{1.5}}
\put(118,20){\circle*{1.5}}
\put(125,25){\line(-3,-5){3}}
\put(125,25){\line(3,-5){3}}
\put(122,20){\circle*{1.5}}
\put(128,20){\circle*{1.5}}
\put(135,25){\line(-3,-5){3}}
\put(135,25){\line(3,-5){3}}
\put(132,20){\circle*{1.5}}
\put(138,20){\circle*{1.5}}
\put(144,25){\line(-3,-5){3}}
\put(144,25){\line(3,-5){3}}
\put(141,20){\circle*{1.5}}
\put(147,20){\circle*{1.5}}
\put(153,25){\line(-3,-5){3}}
\put(153,25){\line(3,-5){3}}
\put(150,20){\circle*{1.5}}
\put(156,20){\circle*{1.5}}
\put(144,25){\circle*{1.5}}
\put(153,25){\circle*{1.5}}
\put(114,19){$\cdots$}
\put(111,19){$\underbrace{}_{\frac{n-3}{5}}$}
\put(121,19){$\underbrace{}_{\frac{n-13}{5}}$}
\put(131,19){$\underbrace{}_{\frac{n-13}{5}}$}
\put(140,19){$\underbrace{}_{\frac{n-13}{5}}$}
\put(149,19){$\underbrace{}_{\frac{n-3}{5}}$}
\put(124,19){$\cdots$}
\put(134,19){$\cdots$}
\put(143,19){$\cdots$}
\put(152,19){$\cdots$}
\put(116,24){$v_{0}$}
\put(126,24){$v_{2}$}
\put(136,24){$v_{4}$}
\put(145,24){$v_{6}$}
\put(154,24){$v_{8}$}
\multiput(140,34)(-.033557047,-.060402685){149}{\line(0,-1){.060402685}}
\multiput(140,34)(.033613445,-.075630252){119}{\line(0,-1){.075630252}}
\put(140,34){\circle*{1.5}}
\multiput(149,34)(-.033557047,-.060402685){149}{\line(0,-1){.060402685}}
\multiput(149,34)(.033613445,-.075630252){119}{\line(0,-1){.075630252}}
\put(149,34){\circle*{1.5}}
\put(109,6){$T^*_{n,4}\!\!=\!\!F^5_3 \!\circ\! (\frac{n-3}{5},\!\frac{n-13}{5},\!\frac{n-13}{5},\!\frac{n-13}{5},\!\frac{n-3}{5})$}
\put(64,34){\circle*{1.5}}
\multiput(64,34)(.033557047,-.060402685){149}{\line(0,-1){.060402685}}
\multiput(64,34)(-.033557047,-.060402685){149}{\line(0,-1){.060402685}}
\put(74,34){\circle*{1.5}}
\multiput(74,34)(.033557047,-.060402685){149}{\line(0,-1){.060402685}}
\multiput(74,34)(-.033557047,-.060402685){149}{\line(0,-1){.060402685}}
\put(59,25){\circle*{1.5}}
\put(69,25){\circle*{1.5}}
\put(79,25){\circle*{1.5}}
\put(59,25){\line(-3,-5){3}}
\put(59,25){\line(3,-5){3}}
\put(56,20){\circle*{1.5}}
\put(62,20){\circle*{1.5}}
\put(69,25){\line(-3,-5){3}}
\put(69,25){\line(3,-5){3}}
\put(66,20){\circle*{1.5}}
\put(72,20){\circle*{1.5}}
\put(79,25){\line(-3,-5){3}}
\put(79,25){\line(3,-5){3}}
\put(76,20){\circle*{1.5}}
\put(82,20){\circle*{1.5}}
\put(88,25){\line(-3,-5){3}}
\put(88,25){\line(3,-5){3}}
\put(85,20){\circle*{1.5}}
\put(91,20){\circle*{1.5}}
\put(97,25){\line(-3,-5){3}}
\put(97,25){\line(3,-5){3}}
\put(94,20){\circle*{1.5}}
\put(100,20){\circle*{1.5}}
\put(88,25){\circle*{1.5}}
\put(97,34){\circle*{1.5}}
\put(97,34){\line(0,-1){9}}
\put(97,25){\circle*{1.5}}
\put(58,19){$\cdots$}
\put(55,19){$\underbrace{}_{\frac{n-3}{5}}$}
\put(65,19){$\underbrace{}_{\frac{n-23}{5}}$}
\put(75,19){$\underbrace{}_{\frac{n-13}{5}}$}
\put(84,19){$\underbrace{}_{\frac{n-3}{5}}$}
\put(93,19){$\underbrace{}_{\frac{n-3}{5}}$}
\put(68,19){$\cdots$}
\put(78,19){$\cdots$}
\put(87,19){$\cdots$}
\put(96,19){$\cdots$}
\multiput(97,34)(-.1048689139,-.0337078652){267}{\line(-1,0){.1048689139}}
\put(60,24){$v_{0}$}
\put(70,24){$v_{2}$}
\put(80,24){$v_{4}$}
\put(89,24){$v_{6}$}
\put(98,24){$y_{11}$}
\multiput(84,34)(-.033557047,-.060402685){149}{\line(0,-1){.060402685}}
\multiput(84,34)(.033613445,-.075630252){119}{\line(0,-1){.075630252}}
\put(84,34){\circle*{1.5}}
\put(52,6){$T^*_{n,4}\!\!=\!\!F^5_2 \!\circ \! (\frac{n-3}{5},\!\frac{n-23}{5},\!\frac{n-13}{5},\!\frac{n-3}{5},\!\frac{n-3}{5})$}
\end{picture}

 \caption{\footnotesize The minimizer graphs for $k=5$}\label{111}
\end{figure}
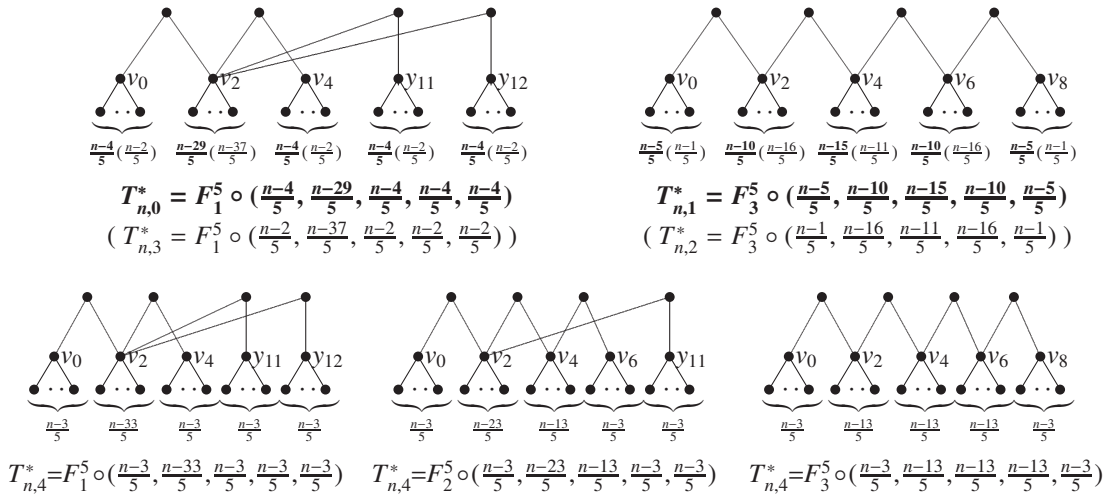

In the rest of this section, let $k=n-\alpha=6$, we briefly
repeat the steps of I, II and III, as in the previous proofs of Theorem \ref{thm-n-5}, to determine the minimizer graphs in $\mathbb{G}^6_{n,\alpha}$.

{\flushleft\bf Step I.} To find $\mathcal{T}^*_-(6)$, we need consider  (\ref{eq-t1}) in case of $k=6$, which leads four possibilities:
$$
\footnotesize
\left\{\begin{array}{ll}d\in \{4,6,8,10\}\\
h\le \min\{ 5-\frac{d}{2}, \lfloor\frac{ d}{4}\rfloor \}\\
\sum^h_{s=1} |M_s|=5-\frac{d}{2}\\
\sigma^*:M_s\longrightarrow M_{s-1}\\
\ \ \mbox{ for }  1\le s \le h
\end{array}\right.\!\!\Rightarrow
\left\{\begin{array}{ll}d=4\\
h= 1\\
\sum^h_{s=1} |M_s|=3\\
\sigma^*:M_s\longrightarrow M_{s-1}\\
\ \ \mbox{ for }  1\le s \le h
\end{array}\right.\!\!\!\!,\
\left\{\begin{array}{ll}d=6\\
h= 1\\
\sum^h_{s=1} |M_s|=2\\
\sigma^*:M_s\longrightarrow M_{s-1}\\
\ \ \mbox{ for }  1\le s \le h
\end{array}\right.\!\!\!\!,\
\left\{\begin{array}{ll}d=8\\
h= 1\\
\sum^h_{s=1} |M_s|=1\\
\sigma^*:M_s\longrightarrow M_{s-1}\\
\ \ \mbox{ for }  1\le s \le h
\end{array}\right.
\!\!\!\!\mbox{or}
\left\{\begin{array}{ll}d=10\\
h=0\\
M_s=\emptyset
\end{array}\right.
$$
The first leads to $F^6_1=T(4; \{x_{11}y_{11}, x_{12}y_{12},x_{13}y_{13}\}; \sigma^*)$ (see Fig.\ref{fig-main-6});
the second leads two main trees $F^6_2=T(6; \{x_{11}y_{11}, x_{12}y_{12}\}; \sigma_1^*)$ and
 $F^6_3=T(6; \{x_{11}y_{11}, x_{12}y_{12}\}; \sigma_2^*)$ (see Fig.\ref{fig-main-6});
the third   leads  $F^6_4=T(6; \{x_{11}y_{11}\}; \sigma_3^*)$ and
 $F^6_5=T(6; \{x_{11}y_{11}\}; \sigma_4^*)$ (see Fig.\ref{fig-main-6});
 the later leads to $F^6_6=T(10; \emptyset; \sigma^*)=P_{11}$ (see Fig.\ref{fig-main-6}). Thus
 $\mathcal{T}^*_-(6)=\{F^6_1,F^6_2,F^6_3,F^6_4,F^6_5,F^6_6\}$.

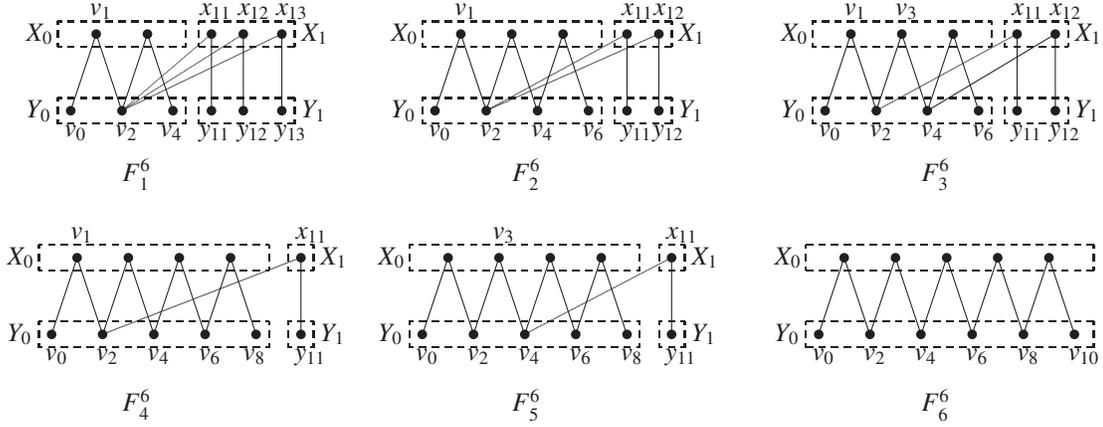
\begin{figure}
  \centering
  \footnotesize
\unitlength 0.85mm 
\linethickness{0.4pt}
\ifx\plotpoint\undefined\newsavebox{\plotpoint}\fi 
\begin{picture}(169,62)(0,0)
\put(13,59){\circle*{1.5}}
\put(13,59){\line(1,-3){4}}
\put(17,47){\circle*{1.5}}
\put(9,47){\circle*{1.5}}
\put(13,59){\line(-1,-3){4}}
\put(21,59){\circle*{1.5}}
\put(21,59){\line(1,-3){4}}
\put(25,47){\circle*{1.5}}
\put(21,59){\line(-1,-3){4}}
\put(31,59){\circle*{1.5}}
\put(31,47){\circle*{1.5}}
\put(31,59){\line(0,-1){12}}
\multiput(31,59)(-.0393258427,-.0337078652){356}{\line(-1,0){.0393258427}}
\multiput(36,59)(-.0533707865,-.0337078652){356}{\line(-1,0){.0533707865}}
\put(2,58){$X_0$}
\put(2,46){$Y_0$}
\put(45,58){$X_1$}
\put(45,46){$Y_1$}
\put(36,59){\circle*{1.5}}
\put(36,47){\circle*{1.5}}
\put(36,59){\line(0,-1){12}}
\put(42,59){\circle*{1.5}}
\put(42,47){\circle*{1.5}}
\put(42,59){\line(0,-1){12}}
\multiput(42,59)(-.0702247191,-.0337078652){356}{\line(-1,0){.0702247191}}
\put(10,24){\circle*{1.5}}
\put(10,24){\line(1,-3){4}}
\put(14,12){\circle*{1.5}}
\put(6,12){\circle*{1.5}}
\put(10,24){\line(-1,-3){4}}
\put(18,24){\circle*{1.5}}
\put(18,24){\line(1,-3){4}}
\put(22,12){\circle*{1.5}}
\put(18,24){\line(-1,-3){4}}
\put(26,24){\circle*{1.5}}
\put(26,24){\line(1,-3){4}}
\put(30,12){\circle*{1.5}}
\put(26,24){\line(-1,-3){4}}
\put(34,24){\circle*{1.5}}
\put(34,24){\line(1,-3){4}}
\put(38,12){\circle*{1.5}}
\put(34,24){\line(-1,-3){4}}
\put(-1,23){$X_0$}
\put(-1,11){$Y_0$}
\put(48,23){$X_1$}
\put(48,11){$Y_1$}
\put(45,24){\circle*{1.5}}
\put(45,12){\circle*{1.5}}
\put(45,24){\line(0,-1){12}}
\multiput(45,24)(-.0870786517,-.0337078652){356}{\line(-1,0){.0870786517}}
\put(68,24){\circle*{1.5}}
\put(68,24){\line(1,-3){4}}
\put(72,12){\circle*{1.5}}
\put(64,12){\circle*{1.5}}
\put(68,24){\line(-1,-3){4}}
\put(76,24){\circle*{1.5}}
\put(76,24){\line(1,-3){4}}
\put(80,12){\circle*{1.5}}
\put(76,24){\line(-1,-3){4}}
\put(84,24){\circle*{1.5}}
\put(84,24){\line(1,-3){4}}
\put(88,12){\circle*{1.5}}
\put(84,24){\line(-1,-3){4}}
\put(92,24){\circle*{1.5}}
\put(92,24){\line(1,-3){4}}
\put(96,12){\circle*{1.5}}
\put(92,24){\line(-1,-3){4}}
\put(57,23){$X_0$}
\put(57,11){$Y_0$}
\put(106,23){$X_1$}
\put(106,11){$Y_1$}
\put(103,24){\circle*{1.5}}
\put(103,12){\circle*{1.5}}
\put(103,24){\line(0,-1){12}}
\multiput(103,24)(-.0646067416,-.0337078652){356}{\line(-1,0){.0646067416}}
\put(130,24){\circle*{1.5}}
\put(130,24){\line(1,-3){4}}
\put(134,12){\circle*{1.5}}
\put(126,12){\circle*{1.5}}
\put(130,24){\line(-1,-3){4}}
\put(138,24){\circle*{1.5}}
\put(138,24){\line(1,-3){4}}
\put(142,12){\circle*{1.5}}
\put(138,24){\line(-1,-3){4}}
\put(146,24){\circle*{1.5}}
\put(146,24){\line(1,-3){4}}
\put(150,12){\circle*{1.5}}
\put(146,24){\line(-1,-3){4}}
\put(154,24){\circle*{1.5}}
\put(154,24){\line(1,-3){4}}
\put(158,12){\circle*{1.5}}
\put(154,24){\line(-1,-3){4}}
\put(119,23){$X_0$}
\put(119,11){$Y_0$}
\put(162,24){\circle*{1.5}}
\put(162,24){\line(1,-3){4}}
\put(166,12){\circle*{1.5}}
\put(162,24){\line(-1,-3){4}}
\put(29,62){$x_{11}$}
\put(35,62){$x_{12}$}
\put(41,62){$x_{13}$}
\put(29,43){$y_{11}$}
\put(35,43){$y_{12}$}
\put(41,43){$y_{13}$}
\put(8,43){$v_{0}$}
\put(16,43){$v_{2}$}
\put(23,43){$v_{4}$}
\put(12,62){$v_{1}$}
\put(17,36){$F^6_1$}
\put(44,27){$x_{11}$}
\put(44,8){$y_{11}$}
\put(5,8){$v_{0}$}
\put(13,8){$v_{2}$}
\put(21,8){$v_{4}$}
\put(29,8){$v_{6}$}
\put(36,8){$v_{8}$}
\put(9,27){$v_{1}$}
\put(17,0){$F^6_4$}
\put(102,27){$x_{11}$}
\put(102,8){$y_{11}$}
\put(63,8){$v_{0}$}
\put(71,8){$v_{2}$}
\put(79,8){$v_{4}$}
\put(87,8){$v_{6}$}
\put(95,8){$v_{8}$}
\put(75,27){$v_{3}$}
\put(78,0){$F^6_5$}
\put(125,8){$v_{0}$}
\put(133,8){$v_{2}$}
\put(141,8){$v_{4}$}
\put(149,8){$v_{6}$}
\put(157,8){$v_{8}$}
\put(165,8){$v_{10}$}
\put(142,0){$F^6_6$}
\put(7,45){\dashbox{1}(20,4)[cc]{}}
\put(7,57){\dashbox{1}(20,4)[cc]{}}
\put(29,45){\dashbox{1}(15,4)[cc]{}}
\put(29,57){\dashbox{1}(15,4)[cc]{}}
\put(4,10){\dashbox{1}(36,4)[cc]{}}
\put(4,22){\dashbox{1}(36,4)[cc]{}}
\put(43,10){\dashbox{1}(4,4)[cc]{}}
\put(43,22){\dashbox{1}(4,4)[cc]{}}
\put(62,10){\dashbox{1}(36,4)[cc]{}}
\put(62,22){\dashbox{1}(36,4)[cc]{}}
\put(101,22){\dashbox{1}(4,4)[cc]{}}
\put(101,10){\dashbox{1}(4,4)[cc]{}}
\put(124,10){\dashbox{1}(45,4)[cc]{}}
\put(124,22){\dashbox{1}(45,4)[cc]{}}
\put(70,59){\circle*{1.5}}
\put(70,59){\line(1,-3){4}}
\put(74,47){\circle*{1.5}}
\put(66,47){\circle*{1.5}}
\put(70,59){\line(-1,-3){4}}
\put(78,59){\circle*{1.5}}
\put(78,59){\line(1,-3){4}}
\put(82,47){\circle*{1.5}}
\put(78,59){\line(-1,-3){4}}
\put(86,59){\circle*{1.5}}
\put(86,59){\line(1,-3){4}}
\put(90,47){\circle*{1.5}}
\put(86,59){\line(-1,-3){4}}
\put(96,59){\circle*{1.5}}
\put(96,47){\circle*{1.5}}
\put(96,59){\line(0,-1){12}}
\multiput(96,59)(-.0617977528,-.0337078652){356}{\line(-1,0){.0617977528}}
\put(59,58){$X_0$}
\put(59,46){$Y_0$}
\put(104,46){$Y_1$}
\put(104,58){$X_1$}
\put(101,59){\circle*{1.5}}
\put(101,47){\circle*{1.5}}
\put(101,59){\line(0,-1){12}}
\multiput(101,59)(-.0758426966,-.0337078652){356}{\line(-1,0){.0758426966}}
\put(95,62){$x_{11}$}
\put(100,62){$x_{12}$}
\put(95,43){$y_{11}$}
\put(100,43){$y_{12}$}
\put(65,43){$v_{0}$}
\put(73,43){$v_{2}$}
\put(81,43){$v_{4}$}
\put(89,43){$v_{6}$}
\put(69,62){$v_{1}$}
\put(78,36){$F^6_2$}
\put(64,45){\dashbox{1}(28,4)[cc]{}}
\put(64,57){\dashbox{1}(28,4)[cc]{}}
\put(94,57){\dashbox{1}(9,4)[cc]{}}
\put(94,45){\dashbox{1}(9,4)[cc]{}}
\put(131,59){\circle*{1.5}}
\put(131,59){\line(1,-3){4}}
\put(135,47){\circle*{1.5}}
\put(127,47){\circle*{1.5}}
\put(131,59){\line(-1,-3){4}}
\put(139,59){\circle*{1.5}}
\put(139,59){\line(1,-3){4}}
\put(143,47){\circle*{1.5}}
\put(139,59){\line(-1,-3){4}}
\put(147,59){\circle*{1.5}}
\put(147,59){\line(1,-3){4}}
\put(151,47){\circle*{1.5}}
\put(147,59){\line(-1,-3){4}}
\put(157,59){\circle*{1.5}}
\put(157,47){\circle*{1.5}}
\put(157,59){\line(0,-1){12}}
\multiput(157,59)(-.0617977528,-.0337078652){356}{\line(-1,0){.0617977528}}
\put(120,58){$X_0$}
\put(120,46){$Y_0$}
\put(166,46){$Y_1$}
\put(166,58){$X_1$}
\put(163,59){\circle*{1.5}}
\put(163,47){\circle*{1.5}}
\put(163,59){\line(0,-1){12}}
\put(163,59){\line(-5,-3){20}}
\put(156,62){$x_{11}$}
\put(162,62){$x_{12}$}
\put(156,43){$y_{11}$}
\put(162,43){$y_{12}$}
\put(126,43){$v_{0}$}
\put(134,43){$v_{2}$}
\put(142,43){$v_{4}$}
\put(150,43){$v_{6}$}
\put(130,62){$v_{1}$}
\put(138,62){$v_{3}$}
\put(142,36){$F^6_3$}
\put(125,45){\dashbox{1}(28,4)[cc]{}}
\put(125,57){\dashbox{1}(28,4)[cc]{}}
\put(155,57){\dashbox{1}(10,4)[cc]{}}
\put(155,45){\dashbox{1}(10,4)[cc]{}}
\end{picture}

  \caption{\footnotesize{All  main trees for $k=6$}}\label{fig-main-6}
\end{figure}

{\flushleft\bf Step II. } Let $n+1\equiv r$ ( $\mbox{mod }6$ ) and $n\ge n_0=3k^2-k-1-(k-1)r=101-5r$, where $0\le r\le 5$. To  determine the kernel $T_{n_0}$ of minimizer graph $T^*\in \mathbb{G}^6_{n,\alpha}$,
we need to traverse the main tree   $T_-^*=(V_1^*,V_2^*)\in \mathcal{T}^*_-(6)$ to determine the kernel  $T_{n_0}$.
 To exactly
 $T_{n_0}=T^*_{-}\circ l_{V_2^*}$ and its leaf sequence $l_{V_2^*}=(l(u)\mid u\in V_2^*)$ satisfying the condition
\begin{equation}\label{eq-con-6}
\left\{\begin{array}{ll}|L(T_{n_0})|=\sum_{u\in V_2^*}l(u)=n_0-11=90-5r,&\vspace{0.1cm}\\
 \bar{l}+r-5-d_{T^*_-}(u)\le l(u)\le \bar{l}+3-d_{T^*_{-}}(u)& \mbox{ if $0\le r \le 4$,}\vspace{0.1cm}\\
 \bar{l}-5-d_{T^*_-}(u)\le l(u)\le \bar{l}+4-d_{T^*_{-}}(u)&\mbox{ if $r=5$,}
  \end{array}\right.\end{equation}
where $\bar{l}=\lfloor\frac{|L(T_{n_0})|}{|V_2^*|}\rfloor=\lfloor\frac{90-5r}{6}\rfloor$.
As the same process as $k=5$, for each $0\le r\le 5$
we first obtain the set $\mathcal{L}_{F^6_i}$, whose elements are   $l_{V_2^*}$ satisfying (\ref{eq-con-6}) for the main tree $T^*_-=F^6_i\in \mathcal{T}^*_-(6)$
( \# of Tab.\ref{table-1-6} indicates the number of elements in  $\mathcal{L}_{F^6_i}$).  Let $\mathbb{F}^6_i=\{F^6_i\circ l_{V_2^*}\mid l_{V_2^*}\in \mathcal{L}_{F^6_i}\}$ for $i=1,...,6$.
Next, by comparing the spectral radii of  graphs in $\mathbb{F}^6_i$, we can get $T^6_i$ (see the seventh column in  Tab.\ref{table-1-6}) with the minimum spectral radius among $\mathbb{F}^6_i$. Finally, by comparing the spectral radii of $T^6_i$ ($i=1,...,6$), we obtain the kernel $T_{n_0}$ shown in the seventh column of Tab.\ref{table-1-6}.
{\flushleft\bf Step III. }
Let $n\ge n_0=101-5r$, where $0\le r\le 5$ and $\ell_{n,6}=\frac{n-n_0}{6}=\frac{101-5r}{6}$.
By Theorem \ref{thm-minimizer-spectral-radius} we have $T^*=T_{n_0}\circ \ell_{n,6} \mathbf{1}_{V_2^*}$ and $\rho(T^*)=\sqrt{\rho^2(T_{n_0})+\ell_{n,6}}$,
 where
  the kernel $T_{n_0}$ is the minimizer graph determined in Step II shown  in Tab.\ref{table-1-6} according to different $0\le r\le 5$.

Finally, we can state our result for $k=6$  in the following Theorem \ref{thm-n-6}.

\begin{table}[H]
\footnotesize
\caption{\small The  kernel  $T_{n_0}$ for $k=6$}
\centering
\renewcommand{\arraystretch}{1.2}
\begin{tabular*}{15.6cm}{p{5pt}|p{5pt}|p{8pt}|p{135pt}|p{10pt}|p{15pt}|p{130pt}|p{38pt}}
\hline
$k$&  $r$& $n_0$& the condition (\ref{eq-con-6})
  & $T^*_-$ &  \#
  & graph $T^6_i$   & $\rho^2(T^6_i)$ \\ \hline
\multirow{44}*{6}
&\multirow{6}*{0}&\multirow{6}*{$101$}&
 \multirow{6}*{$\left\{\begin{array}{ll}\sum_{u\in V_2^*}l(u)=90\vspace{0.08cm}\\10-d_{T^*_-}(u)\le l(u)\le 18-d_{T^*_{-}}(u)
 \end{array}\right.$}&
 $F^6_1$&
$60$
 &
 $F^6_1\circ(16,10,16,16,16,16)$&
$16+\sqrt{6}$
 \\ \cline{5-8}
&&&&$F^6_2$&
$165$
&$F^6_2\circ(16,11,15,16,16,16)$&
$18.4370$
\\ \cline{5-8}
&&&&$F^6_3$&
$243$
&\bm{$F^6_3\circ(16,13,13,16,16,16)=T_{n_0}$}&
\bm{$17+\sqrt{2}$}
 \\
\cline{5-8}
&&&&$F^6_4$&
$791$
&\bm{$F^6_4\circ(16,13,14,15,16,16)=T_{n_0}$}&
\bm{$17+\sqrt{2}$}
 \\
\cline{5-8}
&&&&$F^6_5$&
$495$
&$F^6_5\circ(16,15,12,15,16,16)$&
$18.4309$
 \\
\cline{5-8}
&&&&$F^6_6$&
$651$
&\bm{$F^6_6\circ(16,15,14,14,15,16)=T_{n_0}$}&
\bm{$17+\sqrt{2}$}
 \\
\cline{2-8}
&\multirow{6}*{1}&\multirow{6}*{$96$}&
 \multirow{6}*{$\left\{\begin{array}{ll}\sum_{u\in V_2^*}l(u)=85\vspace{0.08cm}\\10-d_{T^*_-}(u)\le l(u)\le 17-d_{T^*_{-}}(u)
 \end{array}\right.$}&
 $F^6_1$&
$42$
 &
 $F^6_1\circ(16,5,16,16,16,16)$&
$\frac{27+\sqrt{69}}{2}$
\\ \cline{5-8}
&&&&$F^6_2$&
$120$
&$F^6_2\circ(16,8,13,16,16,16)$&
$17.6378$
\\ \cline{5-8}
&&&&$F^6_3$&
$154$
&$F^6_3\circ(16,10,11,16,16,16)$&
$17.6579$
 \\
\cline{5-8}
&&&&$F^6_4$&
$496$
&$F^6_4\circ(16,10,14,13,16,16)$&
$17.6323$
 \\
\cline{5-8}
&&&&\multirow{6}*{$F^6_5$}&
\multirow{6}*{$330$}
&\bm{$F^6_5\circ(15,14,11,13,16,16)=T_{n_0}$}&\multirow{6}*{\bm{$\frac{33+\sqrt{5}}{2}$}}\\
&&&&&&\bm{$F^6_5\circ(15,14,11,14,15,16)=T_{n_0}$}&\\
&&&&&&\bm{$F^6_5\circ(16,13,11,13,16,16)=T_{n_0}$}&\\
&&&&&&\bm{$F^6_5\circ(15,14,12,13,16,15)=T_{n_0}$}&\\
&&&&&&\bm{$F^6_5\circ(15,14,12,14,15,15)=T_{n_0}$}&\\
&&&&&&\bm{$F^6_5\circ(16,13,12,13,16,15)=T_{n_0}$}&\\
\cline{5-8}
&&&&\multirow{4}*{$F^6_6$}&
\multirow{4}*{$396$}
&\bm{$F^6_6\circ(15,14,13,14,14,15)=T_{n_0}$}&\multirow{4}*{\bm{$\frac{33+\sqrt{5}}{2}$}}\\
&&&&&&\bm{$F^6_6\circ(15,14,13,14,13,16)=T_{n_0}$}&\\
&&&&&&\bm{$F^6_6\circ(15,14,14,13,13,16)=T_{n_0}$}&\\
&&&&&&\bm{$F^6_6\circ(16,13,13,14,13,16)=T_{n_0}$}&\\
\cline{2-8}
&\multirow{6}*{2}&\multirow{6}*{$91$}&
 \multirow{6}*{$\left\{\begin{array}{ll}\sum_{u\in V_2^*}l(u)=80\vspace{0.08cm}\\10-d_{T^*_-}(u)\le l(u)\le 16-d_{T^*_{-}}(u)
 \end{array}\right.$}&
 $F^6_1$&
$29$
 &
 $F^6_1\circ(15,5,15,15,15,15)$&
$13+\sqrt{14}$
 \\ \cline{5-8}
&&&&$F^6_2$&
$84$
&$F^6_2\circ(15,8,12,15,15,15)$&
$16.7443$
\\ \cline{5-8}
&&&&$F^6_3$&
$95$
&\bm{$F^6_3\circ(15,10,10,15,15,15)=T_{n_0}$}&
\bm{$15+\sqrt{3}$}
 \\
\cline{5-8}
&&&&$F^6_4$&
$296$
&\bm{$F^6_4\circ(15,10,13,12,15,15)=T_{n_0}$}&
\bm{$15+\sqrt{3}$}
 \\
\cline{5-8}
&&&&$F^6_5$&
$210$
&$F^6_5\circ(15,12,11,12,15,15)$&
$16.7491$
 \\
\cline{5-8}
&&&&$F^6_6$&
$236$
&\bm{$F^6_6\circ(15,12,13,13,12,15)=T_{n_0}$}&
\bm{$15+\sqrt{3}$}
 \\
\cline{2-8}
&\multirow{6}*{3}&\multirow{6}*{$86$}&
 \multirow{6}*{$\left\{\begin{array}{ll}\sum_{u\in V_2^*}l(u)=75\vspace{0.08cm}\\10-d_{T^*_-}(u)\le l(u)\le 15-d_{T^*_{-}}(u)
 \end{array}\right.$}&
 $F^6_1$&
$19$
 &
 \bm{$F^6_1\circ(14,5,14,14,14,14)=T_{n_0}$}&
\bm{$\frac{25+\sqrt{45}}{2}$}
\\ \cline{5-8}
&&&&$F^6_2$&
$56$
&$F^6_2\circ(14,7,12,14,14,14)$&
$15.8664$
\\ \cline{5-8}
&&&&$F^6_3$&
$54$
&$F^6_3\circ(14,9,10,14,14,14)$&
$15.8830$
 \\
\cline{5-8}
&&&&$F^6_4$&
$166$
&$F^6_4\circ(14,10,11,12,14,14)$&
$15.8878$
 \\
\cline{5-8}
&&&&$F^6_5$&
$126$
&$F^6_5\circ(14,12,9,12,14,14)$&
$15.8750$
 \\
\cline{5-8}
&&&&$F^6_6$&
$126$
&$F^6_6\circ(14,12,11,12,12,14)$&
$15.8969$
 \\
\cline{2-8}
&\multirow{6}*{4}&\multirow{6}*{$81$}&
 \multirow{6}*{$\left\{\begin{array}{ll}\sum_{u\in V_2^*}l(u)=70\vspace{0.08cm}\\10-d_{T^*_-}(u)\le l(u)\le 14-d_{T^*_{-}}(u)
 \end{array}\right.$}&
 $F^6_1$&
$12$
 &
 \bm{$F^6_1\circ(13,5,13,13,13,13)=T_{n_0}$}&
\bm{$15$}
\\ \cline{5-8}
&&&&$F^6_2$&
$35$
&\bm{$F^6_2\circ(13,7,11,13,13,13)=T_{n_0}$}&
\bm{$15$}
\\ \cline{5-8}
&&&&$F^6_3$&
$30$
&\bm{$F^6_3\circ(13,9,9,13,13,13)=T_{n_0}$}&
\bm{$15$}
 \\
\cline{5-8}

&&&&$F^6_4$&
$86$
&\bm{$F^6_4\circ(13,9,11,11,13,13)=T_{n_0}$}&
\bm{$15$}
\\
\cline{5-8}
&&&&$F^6_5$&
$70$
&\bm{$F^6_5\circ(13,11,9,11,13,13)=T_{n_0}$}&
\bm{$15$}
 \\
\cline{5-8}
&&&&$F^6_6$&
$66$
&\bm{$F^6_6\circ(13,11,11,11,11,13)=T_{n_0}$}&
\bm{$15$}
 \\
\cline{2-8}
&\multirow{6}*{5}&\multirow{6}*{$76$}&
 \multirow{6}*{$\left\{\begin{array}{ll}\sum_{u\in V_2^*}l(u)=65\vspace{0.08cm}\\5-d_{T^*_-}(u)\le l(u)\le 14-d_{T^*_{-}}(u)
 \end{array}\right.$}&
 $F^6_1$&
$83$
 &
 \bm{$F^6_1\circ(12,5,12,12,12,12)=T_{n_0}$}&
\bm{$\frac{23+\sqrt{29}}{2}$}
\\ \cline{5-8}
&&&&$F^6_2$&
$220$
&$F^6_2\circ(12,7,10,12,12,12)$&
$14.2080$
\\ \cline{5-8}
&&&&$F^6_3$&
$364$
&$F^6_3\circ(12,8,9,12,12,12)$&
$14.2361$
 \\
\cline{5-8}

&&&&$F^6_4$&
$1211$
&$F^6_4\circ(12,9,10,10,12,12)$&
$14.2470$
 \\
\cline{5-8}
&&&&$F^6_5$&
$715$
&$F^6_5\circ(12,10,9,10,12,12)$&
$14.2283$
 \\
\cline{5-8}
&&&&$F^6_6$&
$1001$
&$F^6_6\circ(12,10,10,11,10,12)$&
$14.2831$
\\
\hline
\end{tabular*}\label{table-1-6}
\end{table}

\begin{thm}\label{thm-n-6}
Let $T^*$ be a minimizer  graph in $\mathbb{G}^6_{n,\alpha}$ and let
$n+1\equiv r$ ( $\mbox{mod }6$ ), where $0\le r \le 5$. For $n\ge 101-5r$,  all  the  minimizer  graphs in $\mathbb{G}^6_{n,\alpha}$ and their spectral radii are listed in Tab.\ref{table-k-6},
in which $F^6_i$ are shown in Fig.\ref{fig-main-6} for $i=1,2,...,6$.
\end{thm}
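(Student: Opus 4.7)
The plan is to mirror the three-step scheme already executed for $k=5$ in Theorem \ref{thm-n-5}, applying it to $k=6$. First, Step I has essentially been carried out in the paragraph immediately preceding the statement: solving the parameter system (\ref{eq-t1}) for $k=6$ yields the four combinatorial cases $(d,h)\in\{(4,1),(6,1),(8,1),(10,0)\}$, which produce the six main trees $\mathcal{T}^*_-(6)=\{F^6_1,\ldots,F^6_6\}$ shown in Fig.\ref{fig-main-6}. This uses only Proposition \ref{main-tree-diam}, Proposition \ref{lem-matching} and the definition (\ref{eq-t1}), so no new argument is required.

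Next, in Step II, I would fix $r\in\{0,1,\ldots,5\}$, set $n_0=101-5r$, and look for the kernel $T_{n_0}\in\mathbb{G}^6_{n_0,n_0-6}$. By Theorem \ref{thm-minimizer-main} such a kernel has the form $F^6_i\circ l_{V_2^*}$ for some $i\in\{1,\ldots,6\}$ and some leaf sequence $l_{V_2^*}=(l(u)\mid u\in V^*_2)$ with $\sum_u l(u)=n_0-11=90-5r$. Lemma \ref{cor-s-range} then constrains each $l(u)$, yielding the condition (\ref{eq-con-6}); note that the bound changes slightly at $r=5$ because the hypothesis $0\le r\le 4$ of (\ref{ll-eq-2}) breaks down and we must fall back on (\ref{ll-eq-1}). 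For every $i$ I would enumerate all $l_{V_2^*}$ satisfying (\ref{eq-con-6}) (the counts \# appear in Tab.\ref{table-1-6}), collect the finite candidate set $\mathbb{F}^6_i=\{F^6_i\circ l_{V_2^*}\}$, and compute spectral radii to identify the minimizer $T^6_i$ within each $\mathbb{F}^6_i$. Comparing $\rho(T^6_1),\ldots,\rho(T^6_6)$ then selects the kernel $T_{n_0}$; in several rows of Tab.\ref{table-1-6} (e.g.\ $r=0,2,4,5$) the minimum is attained by more than one main tree, so all tying candidates must be recorded as alternative kernels.

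Finally, Step III is a direct invocation of Theorem \ref{thm-minimizer-spectral-radius}: since $n\ge n_0=101-5r$ and $n\equiv n_0\pmod 6$, every minimizer $T^*\in\mathbb{G}^6_{n,\alpha}$ has the form $T^*=T_{n_0}\circ\ell_{n,6}\mathbf{1}_{V^*_2}$ with $\ell_{n,6}=(n-n_0)/6$, and
\[
\rho(T^*)=\sqrt{\rho^2(T_{n_0})+\ell_{n,6}}.
\]
Substituting the kernels obtained in Step II into this formula and simplifying (adding $\ell_{n,6}$ uniformly to each coordinate of the leaf sequence) produces the rows of Tab.\ref{table-k-6}, completing the theorem.

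The only genuine obstacle is the combinatorial bulk of Step II: for $k=6$ the condition (\ref{eq-con-6}) admits up to about $1200$ valid sequences per main tree and $36$ (main tree, residue) combinations in total, so the verification is unavoidably computer-assisted. Once (\ref{eq-con-6}) is tightened by Lemma \ref{cor-s-range} the search space is finite and each candidate's spectral radius is the largest root of an explicit low-degree polynomial coming from an equitable partition of $F^6_i\circ l_{V_2^*}$, so the enumeration is routine; the theorem is then simply the compiled record of that enumeration combined with the closed-form extension provided by Theorem \ref{thm-minimizer-spectral-radius}.
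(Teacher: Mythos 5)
Your proposal matches the paper's argument essentially verbatim: the same three-step scheme (enumerate $\mathcal{T}^*_-(6)$ from (\ref{eq-t1}), constrain the leaf sequences via Lemma \ref{cor-s-range} to get (\ref{eq-con-6}) with the separate $r=5$ case, do a finite computer-assisted search for the kernels $T_{n_0}$, then lift by Theorem \ref{thm-minimizer-spectral-radius}). The only quibble is your illustrative list of residues with tied kernels ($r=5$ in fact yields a unique kernel while $r=1$ yields several), which does not affect the correctness of the method.
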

\begin{table}[H]
\footnotesize
\caption{\small The  minimizer  graph  $T^*$ and its spectral radius ( $n+1\equiv r$ ( $\mbox{mod }6$ ) )}
\centering
\renewcommand{\arraystretch}{1.35}
\begin{tabular*}{15.2cm}{p{2pt}|p{144pt}|p{36pt}||p{2pt}|p{136pt}|p{36pt}}
\hline
  $r$& $T^*$& $\rho(T^*)$&$r$& $T^*$& $\rho(T^*)$\\ \hline
\multirow{3}*{$0$}&
$F^6_3\!\circ\!(\frac{n-5}{6},\frac{n-23}{6},\frac{n-23}{6},\frac{n-5}{6},\frac{n-5}{6},\frac{n-5}{6})$
&\multirow{3}*{\!\!\!$\sqrt{\frac{n+1}{6}\!+\!\!\sqrt{2}}$}&
\multirow{3}*{$2$}&
$F^6_3\!\circ\!(\frac{n-1}{6},\frac{n-31}{6},\frac{n-31}{6},\frac{n-1}{6},\frac{n-1}{6},\frac{n-1}{6})$
&\multirow{3}*{\!\!\!$\sqrt{\frac{n-1}{6}\!+\!\!\sqrt{3}}$}
\\
&$F^6_4\!\circ\!(\frac{n-5}{6},\frac{n-23}{6},\frac{n-17}{6},\frac{n-11}{6},\frac{n-5}{6},\frac{n-5}{6})$&
&&$F^6_4\!\circ\!(\frac{n-1}{6},\frac{n-31}{6},\frac{n-13}{6},\frac{n-19}{6},\frac{n-1}{6},\frac{n-1}{6})$&\\
&$F^6_6\!\circ\!(\frac{n-5}{6},\frac{n-11}{6},\frac{n-17}{6},\frac{n-17}{6},\frac{n-11}{6},\frac{n-5}{6})$&
&&$F^6_6\!\circ\!(\frac{n-1}{6},\!\frac{n-19}{6},\frac{n-13}{6},\frac{n-13}{6},\frac{n-19}{6},\frac{n-1}{6})$&\\
\cline{1-6}
\multirow{10}*{$1$}&
$F^6_5\!\circ\!(\frac{n}{6}\!-\!1,\frac{n}{6}-2,\frac{n}{6}-5,\frac{n}{6}-3,\frac{n}{6},\frac{n}{6})$
&\multirow{10}*{\!\!\!$\sqrt{\frac{n}{6}+\!\frac{1+\sqrt{5}}{2}}$}&
\multirow{2}*{$3$}&
\multirow{2}*{$F^6_1\!\circ\!(\frac{n-2}{6},\frac{n-56}{6},\frac{n-2}{6},\frac{n-2}{6},\frac{n-2}{6},\frac{n-2}{6})$}
&\multirow{2}*{$\!\!\!\sqrt{\frac{n\!-11}{6}\!+\!\!\frac{\sqrt{45}}{2}}$}\\

&$F^6_5\!\circ\!(\frac{n}{6}\!-\!1,\frac{n}{6}\!-\!2,\frac{n}{6}\!-\!5,\frac{n}{6}\!-\!2,\frac{n}{6}\!-\!1,\frac{n}{6})$&
&&&\\\cline{4-6}

&$F^6_5\!\circ\!(\frac{n}{6},\frac{n}{6}-3,\frac{n}{6}-5,\frac{n}{6}-3,\frac{n}{6},\frac{n}{6})$&
&\multirow{6}*{$4$}&
$F^6_1\!\circ\!(\frac{n-3}{6},\frac{n-51}{6},\frac{n-3}{6},\frac{n-3}{6},\frac{n-3}{6},\frac{n-3}{6})$
&\multirow{6}*{$\sqrt{\frac{n+9}{6}}$}
\\
&$F^6_5\!\circ\!(\frac{n}{6}\!-\!1,\frac{n}{6}\!-\!2,\frac{n}{6}\!-\!4,\frac{n}{6}\!-\!3,\frac{n}{6},\frac{n}{6}\!-\!1)$&
&
&$F^6_2\!\circ\!(\frac{n-3}{6},\frac{n-39}{6},\frac{n-15}{6},\frac{n-3}{6},\frac{n-3}{6},\frac{n-3}{6})$&\\

&$F^6_5\!\circ\!(\frac{n}{6}\!-\!1,\frac{n}{6}\!-\!2,\frac{n}{6}\!-\!4,\frac{n}{6}\!-\!2,\frac{n}{6}\!-\!1,\frac{n}{6}\!-\!1)$&
&&$F^6_3\!\circ\!(\frac{n-3}{6},\frac{n-27}{6},\frac{n-27}{6},\frac{n-3}{6},\frac{n-3}{6},\frac{n-3}{6})$&\\

&$F^6_5\!\circ\!(\frac{n}{6},\frac{n}{6}-3,\frac{n}{6}-4,\frac{n}{6}-3,\frac{n}{6},\frac{n}{6}-1)$&
&
&$F^6_4\!\circ\!(\frac{n-3}{6},\frac{n-27}{6},\frac{n-15}{6},\frac{n-15}{6},\frac{n-3}{6},\frac{n-3}{6})$&\\

&$F^6_6\!\circ\!(\frac{n}{6}\!-\!1,\frac{n}{6}\!-\!2,\frac{n}{6}\!-\!3,\frac{n}{6}\!-\!2,\frac{n}{6}\!-\!2,\frac{n}{6}\!-\!1)$&
&
&$F^6_5\!\circ\!(\frac{n-3}{6},\frac{n-15}{6},\frac{n-27}{6},\frac{n-15}{6},\frac{n-3}{6},\frac{n-3}{6})$&\\

&$F^6_6\!\circ\!(\frac{n}{6}\!-\!1,\frac{n}{6}\!-\!2,\frac{n}{6}\!-\!3,\frac{n}{6}\!-\!2,\frac{n}{6}\!-\!3,\frac{n}{6})$&
&&$F^6_6\!\circ\!(\frac{n-3}{6},\!\frac{n-15}{6},\frac{n-15}{6},\frac{n-15}{6},\frac{n-15}{6},\frac{n-3}{6})$&\\
\cline{4-6}

&$F^6_6\!\circ\!(\frac{n}{6}\!-\!1,\frac{n}{6}\!-\!2,\frac{n}{6}\!-\!2,\frac{n}{6}\!-\!3,\frac{n}{6}\!-\!3,\frac{n}{6})$&&\multirow{2}*{$5$}&
\multirow{2}*{$F^6_1\!\circ\!(\frac{n-4}{6},\frac{n-46}{6},\frac{n-4}{6},\frac{n-4}{6},\frac{n-4}{6},\frac{n-4}{6})$}
&\multirow{2}*{$\!\!\!\sqrt{\frac{n\!-7}{6}\!+\!\!\frac{\sqrt{29}}{2}}$}\\
&$F^6_6\!\circ\!(\frac{n}{6},\frac{n}{6}-3,\frac{n}{6}-3,\frac{n}{6}-2,\frac{n}{6}-3,\frac{n}{6})$&&&&\\
\hline
\end{tabular*}\label{table-k-6}
\end{table}

\begin{remark}
The key to determining a  minimizer graph $T^*$ is to determine its kernel, which is construct from a main trees in the set  $\mathcal{T}_{-}^*(k)$. The number of the main trees  increase as $k $ increases, and we see from known results, in particular our Theorem \ref{thm-n-5} and Theorem \ref{thm-n-6}, that every main tree in $\mathcal{T}_{-}^*(k)$ generates at least one minimizer graph.
\end{remark}

\section{Conclusion}
Theoretically, Theorem \ref{thm-minimizer-spectral-radius} together with Theorem \ref{thm-minimizer-main} completely characterize the minimizer graph and its spectral radius  in $\mathbb{G}_{n,\alpha}^k$ when $k=n-\alpha \le\frac{n}{2}$,
moreover we give a general method for determining the minimizer graphs in three specific steps.
However, the characterization of minimizer graph and its spectral radius for $k=n-\alpha >\frac{n}{2}$ is still an open problem.

It is clear from our results that, on the one hand, although the  minimizer graph $T^*=T_{n_0}\circ \ell_{n,k} \mathbf{1}_{V_2^*}$ and its spectral radius $\rho(T^*)=\sqrt{\rho^2(T_{n_0})+\ell_{n,k}}$ have uniform expressions depending on its kernel $T_{n_0}$, the  minimizer graphs are generally not unique and their tree structures are diverse as  increases with $k$.
 On the other hand, the representation of the  minimizer graph depends on the classification of $n$ by mod $k$.
The determination of the minimizer graph and the calculation of its spectral radius have a certain complexity.
Given $k$, the kernel $T_{n_0}$ is the minimal graph in $\mathbb{G}_{n_0,n_0-k}^k$, where $n_0=3k^2-k-1-(k-1)r$, and when $k$ is small (e.g., $k=1, 2,... ,6$) we can simply determine $T_{n_0}$ from the structural features of the minimizer graph obtained in this paper,
thus giving minimizer graphs of arbitrary order $n\ge n_0$. However, as $k$ increases $n_0$ grows by the square order of $k$, the kernel of the minimizer graph can only be found with the help of a computer.
As Stevanovi\'{c} pointed out in \cite{Stevanovic},  determining the  graph with the minimum spectral radius among connected graph with independence number  $\alpha$ appears  to be a tough problem.


\begin{thebibliography}{11}{\small
 \bibitem{8}
F. Belardo, E.M. Li Marzi, S.K. Simi\'{c},
Trees with minimal index and diameter at most four,
Discrete Math. 310 (2010) 1708--1714.
\vspace{-0.3cm}
\bibitem{37}
S.M. Cioab\v{a}, E.R. van Dam, J.H. Koolen, J.H. Lee,
Asymptotic results on the spectral radius and the diameter of graphs,
Linear Algebra Appl. 432 (2010) 722--737.
\vspace{-0.3cm}
  \bibitem{36}
S.M. Cioab\v{a}, E.R. van Dam, J.H. Koolen, J.H. Lee,
A lower bound for the spectral radius of graphs with fixed diameter,
European J. Combin. 31 (2010) 1560--1566.
\vspace{-0.3cm}
\bibitem{Csikvari} P. Csikv\'{a}ri, Integral trees of arbitrarily large diameters, J. Algebraic Combin. 32 (2010) 371--377. \vspace{-0.3cm}
\bibitem{Cvetkovic1} D.M. Cvetkovi\'{c}, M. Doob, H. Sachs, Spectra of Graphs--Theory and Application. Deutscher Verlag der Wissenschaften--Academic Press, Berlin--New York, 1980; second ed. 1982; third ed., Johann Ambrosius Barth Verlag, Heidelberg--Leipzig, 1995.
\vspace{-0.3cm}
\bibitem{eigenspace} D. Cvetkovi\'{c}, P. Rowlinson, S. Simi\'{c}, Eigenspaces of graphs.  Cambridge University Press, Cambridge, 1997.
    \vspace{-0.3cm}
  \bibitem{154}
E.R. van Dam,  R.E. Kooij,
The minimal spectral radius of graphs with a given diameter,
Linear Algebra Appl. 423 (2007) 408--419.
\vspace{-0.3cm}
 \bibitem{153}
  E.R. van Dam, Graphs with given diameter maximizing the spectral radius, Linear Algebra Appl. 426 (2007) 454--457.
\vspace{-0.3cm}
\bibitem{Du} X. Du, L.S. Shi, Graphs with small independence number minimizing the spectral radius, Discrete Math. Algorithms Appl. 5 (2013) 1350017.\vspace{-0.3cm}
\bibitem{61}
 L.H. Feng, G.H. Yu, X.D. Zhang, Spectral radius of graphs with given matching number. Linear Algebra Appl. 422 (2007) 133--138.
\vspace{-0.3cm}
 \bibitem{72}
  P. Hansen, D. Stevanovi\'{c}, On bags and bugs, Discrete Appl. Math. 156 (2008) 986--997.
        \vspace{-0.3cm}
\bibitem{Hoffman} A.J. Hoffman, J.H. Smith, in: Fiedler (Ed.), Recent Advances in Graph Theory, Academic Praha, 1975, 273--281.\vspace{-0.3cm}
\bibitem{Lu} C.Y. Ji, M. Lu, On the spectral radius of trees with given independence number, Linear Algebra Appl. 488 (2016) 102--108.\vspace{-0.3cm}
\bibitem{87}
    J.F. Lan, L.Y. Lu, L.S. Shi, Graphs with diameter $n-e$ minimizing the spectral radius, Linear Algebra Appl. 437 (2012) 2823--2850.
\vspace{-0.3cm}
\bibitem{Lou} Z.Z. Lou, J.M. Guo, The spectral radius of graphs with given independent number, Discrete Math.  345 (2022) 112778. \vspace{-0.3cm}
\bibitem{111}
    V. Nikiforov, Bounds on graph eigenvalues. II. Linear Algebra Appl. 427 (2007) 183--189.
    \vspace{-0.3cm}
\bibitem{Stevanovic}
    D. Stevanovi\'{c}, Spectral Radius of Graphs, Academic Press, Amsterdam, 2015.\vspace{-0.3cm}
\bibitem{143}D. Stevanovi\'{c}, M. Aouchiche, P. Hansen, On the spectral radius of graphs with a given domination number, Linear Algebra Appl. 428 (2008) 1854--1864.
\vspace{-0.3cm}
\bibitem{Wu} B.F. Wu, E.L. Xiao, Y. Hong, The spectral radius of trees on $k$ pendant vertices,
    Linear Algebra Appl. 395 (2005) 343--349.\vspace{-0.3cm}
\bibitem{Xu}  M.M. Xu, Y. Hong, J.L. Shu, M.Q. Zhai, The minimum spectral radius of graphs with a given independence number, Linear Algebra Appl. 431 (2009) 937--945. \vspace{-0.3cm}
\bibitem{166}
X.Y. Yuan, J.Y. Shao, Y. Liu,
The minimal spectral radius of graphs of order $n$ with diameter $n-4$,
Linear Algebra Appl. 428 (2008) 2840--2851.
\vspace{-0.3cm}

}
\end{thebibliography}
\end{document}